\definecolor{blackgreen}{RGB}{0,80,0}
\newcommand{\Real}{\mathbb R}
\newtheorem{thm}{Theorem}[section]
\newtheorem{prop}[thm]{Proposition}
\newtheorem{lemma}[thm]{Lemma}
\newtheorem{rem}[thm]{Remark}
\newtheorem{coro}[thm]{Corollary}
\numberwithin{equation}{section}
\title{On well-posedness and concentration of blow-up solutions for the intercritical inhomogeneous NLS equation}
\author{Mykael Cardoso, Luiz Gustavo Farah and Carlos M. Guzm\'an} 
\date{} 
\begin{document}
\maketitle
	
\begin{abstract}\noindent
We consider the focusing inhomogeneous nonlinear Schr\"odinger (INLS) equation in $\mathbb{R}^N$
$$i \partial_t u +\Delta u + |x|^{-b} |u|^{2\sigma}u = 0,$$
where $N\geq 2$ and $\sigma$, $b>0$. We first obtain a small data global result in $H^1$, which, in the two spatial dimensional case, improves the third author result in \cite{Boa} on the range of $b$. For $N\geq 3$ and $\frac{2-b}{N}<\sigma<\frac{2-b}{N-2}$, we also study the local well posedness in $\dot H^{s_c}\cap \dot H^1 $, where $s_c=\frac{N}{2}-\frac{2-b}{2\sigma}$. Sufficient conditions for global existence of solutions in $\dot H^{s_c}\cap \dot H^1$ are also established, using a  Gagliardo-Nirenberg type estimate. Finally, we study the $L^{\sigma_c}-$norm concentration phenomenon, where $\sigma_c=\frac{2N\sigma}{2-b}$, for finite time blow-up solutions in $\dot H^{s_c}\cap \dot H^1$ with bounded $\dot H^{s_c}-$norm. Our approach is based on the compact embedding of $\dot H^{s_c}\cap \dot H^1$ into a weighted $L^{2\sigma+2}$ space.

\end{abstract}

\section{Introduction}
In this paper, we study the initial value problem (IVP) for the inhomogeneous nonlinear Schr\"odinger (INLS) equation
\begin{equation}
\begin{cases}
i \partial_t u + \Delta u + |x|^{-b} |u|^{2 \sigma}u = 0, \,\,\, x \in \mathbb{R}^N, \,t>0,\\
u(x,0) = u_0(x) ,
\end{cases}
\label{PVI}
\end{equation}
where $\sigma, b>0$ are real numbers. 


The INLS model is a extension of the classical nonlinear Schr\"odinger equation (case $b=0$), extensively studied in recent years (see, \citet{Sulem} \citet{Bo99}, \citet{cazenave}, \citet{LiPo15}, \citet{TaoBook}, \citet{Fi15} and the references therein). As suggested by \citet{Gill} and \citet{LIu}, it can be used as a model for the propagation of laser beams in nonlinear optics. 

The well-posedness for the INLS equation $H^1(\Real^{N})$ was first studied by \citet{g_8}. Specifically, they showed that \eqref{PVI} is locally well-posed in $H^1(\Real^{N})$ if $0 < \sigma < \frac{2-b}{N-2}$ ($0 < \sigma < \infty$, if $N=1,2$) and $0 < b < 2$. The $H^1$ flow admits the conservation of mass $M[u]$ and energy $E[u]$ defined by
\begin{equation}\label{mass}
M\left[u(t) \right] = \int |u(t)|^2 dx = M[u_0],
\end{equation}
\begin{equation}\label{energy}
E\left[u(t) \right] = \frac{1}{2}\int |\nabla u(t)|^2 dx - \frac{1}{2 \sigma+2} \int |x|^{-b}|u(t)|^{2 \sigma+2} dx = E[u_0].
\end{equation}

An important symmetry is given by the scaling 
\begin{align}
u_\rho(x,t)=\rho^{\frac{2-b}{2\sigma}}u(\rho x,\rho^2t),\quad \rho>0.
\end{align}
It is easy to see that if $u(x, t)$ is a solution to the INLS equation, then $u_\rho(x,t)$ is also a solution.

The critical Sobolev index $s_c$ related to \eqref{PVI} is such that the homogeneous Sobolev space $H^{s_c}(\Real^{N})$ leaves the scaling symmetry invariant, explicitly 
$$
s_c=\frac{N}{2}-\frac{2-b}{2\sigma}.
$$
We say that the problem is mass-critical (or $L^2$-critical) if $s_c = 0$, energy-critical (or $\dot{H}^1$-critical) if $s_c=1$ and intercritical if $0<s_c<1$. In terms of $\sigma$ and $b$ we can reformulate these condition as
\begin{itemize}
\item[] Mass-critical: $\sigma = \frac{2-b}{N}$.;
\item[] Energy-critical\footnote{Note that this case is only possible if $N\geq3$.}: $\sigma = \frac{2-b}{N-2}$;
\item[] Intercritical: $\frac{2-b}{N}<\sigma<\frac{2-b}{N-2}$ ($\frac{2-b}{N}<\sigma<\infty$, if $N=1,2$).
\end{itemize}


Several other authors studied the well-posedness of the IVP \eqref{PVI}. Genoud \cite{genoud2012critical} studied the mass-critical case and showed the global well-posedness in $H^1(\mathbb{R}^N)$, $N\geq 1$, provided that the mass of the initial data staisfies an appropriate smallness condition. This result was extended to the intercritical case by the second author in \cite{Farah_well}. Applying a different method, based on the Strichartz estimates satisfied by the linear evolution, the third author in \cite{Boa} established the local well-posedness in $H^1(\Real^N)$, for $N \geq 2$, $0 < \sigma <  \frac{2-b}{N-2}$ ($0 < \sigma < \infty$, if $N=2$) and $0 < b < \frac{N}{3}$, if  $N =2,3$ or $0 < b < 2$, if $N \geq 4$. In addition, in the intercritical case, he also showed a small data global theory in $H^1(\mathbb{R}^N)$ for $N \geq 2$ with the same assumptions on the parameter $b$. For the local theory in $H^1(\Real^N)$, the range of $b$ was extended by \citet{Boa_Dinh}\footnote{Dinh also improved considered the case $N=3$ and $\frac{1}{2}<b<\frac{3}{2}$, however with the extra assumption $\sigma<\frac{3-2b}{2b-1}$.} in dimension $N=2$ to $0<b<1$ and by \citet{cholee2019stability}\footnote{It is worth mentioning that Cho-Lee studied the INLS equation with a potencial $i \partial u_t + \Delta u+Vu + |x|^{-b} |u|^{2 \sigma}u = 0$.} in dimension $N=3$ to $0<b<\frac{3}{2}$. In all these results the range of $b$ is more restricted than the one in \citet{g_8}, where the authors considered $0 < b < 2$. However, the works \cite{Boa}, \cite{Boa_Dinh} and \cite{cholee2019stability} obtained the extra information that the solutions belong to the spaces $\in L^q\left([-T,T];L^r \right)$ for any $L^2$-admissible pair $(q,p)$ satisfying
\begin{equation*}
\frac{2}{q}=\frac{N}{2}-\frac{N}{p},
\end{equation*}
where
\begin{equation}\label{L2adm}
\left\{\begin{array}{cl}
2\leq & p  \leq \frac{2N}{N-2}\hspace{0.5cm}\textnormal{if}\;\;\;  N\geq 3,\\
2 \leq  & p < \infty\;  \hspace{0.5cm}\textnormal{if}\;\; \;N=2,\\
2 \leq & p \leq  \infty\;  \hspace{0.5cm}\textnormal{if}\;\;\;N=1.
\end{array}\right.
\end{equation}

More recently, \citet{campos2019scattering} also proved a small data global theory in $H^1(\mathbb{R}^N)$ in the intercritical regime improving the range of $b$ to $0<b<3/2$ in dimension $N=3$. Inspired by this last result, our first goal of this paper, is to improve the small data global result of \cite{Boa} in the intercritical 2D to the whole range of $b$ where local well-posedness was obtained by \cite{Boa_Dinh}, that is $0<b<1$. More precisely. 
\begin{thm}\label{GWP} Assume $N=2$ and $0<b<1$. Suppose $\frac{2-b}{2}<\sigma<\infty$ and $u_0 \in H^1(\mathbb{R}^N)$ satisfies $\|u_0\|_{H^1}\leq \eta$, for some $\eta>0$. Then there exists $\delta=\delta(\eta)>0$ such that if 
$
\|e^{it\Delta}u_0\|_{S(\dot{H}^{s_c})}<\delta,
$
then there exists a unique global solution $u$ of \eqref{PVI} such that\footnote{Here, as usual, $e^{it\Delta}$ denotes the unitary group associated with the linear Schr\"odinger equation $i \partial_t u +\Delta u = 0$.}
\begin{equation*}
\|u\|_{S(\dot{H}^{s_c})}\leq  2\|e^{it\Delta}u_0\|_{S(\dot{H}^{s_c})}\;\;\;\;\textnormal{and}\qquad
\|u\|_{S\left(L^2\right)}+\|\nabla  u\|_{S\left(L^2\right)}\leq 2c\|u_0\|_{H^1},
\end{equation*}
for some universal constant $c>0$. 
\end{thm}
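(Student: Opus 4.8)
The plan is to solve \eqref{PVI} by a contraction-mapping argument applied to the Duhamel operator, run simultaneously at the scaling-critical regularity $\dot H^{s_c}$ --- to exploit the smallness of $\|e^{it\Delta}u_0\|_{S(\dot H^{s_c})}$ --- and at the $H^1$ level, to propagate the full regularity of the data. Set
$$
\Phi(u)(t)=e^{it\Delta}u_0+i\int_0^t e^{i(t-s)\Delta}\bigl(|x|^{-b}|u|^{2\sigma}u\bigr)(s)\,ds,
$$
and work in the complete metric space
$$
B=\Bigl\{u:\ \|u\|_{S(\dot H^{s_c})}\le 2\|e^{it\Delta}u_0\|_{S(\dot H^{s_c})},\quad \|u\|_{S(L^2)}+\|\nabla u\|_{S(L^2)}\le 2c\|u_0\|_{H^1}\Bigr\},
$$
equipped with the distance $d(u,v)=\|u-v\|_{S(\dot H^{s_c})}$. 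If $\Phi$ maps $B$ into itself and contracts there, its unique fixed point is the desired global solution, and the two displayed bounds in the statement are precisely the defining constraints of $B$.

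First I would invoke the homogeneous and inhomogeneous Strichartz inequalities in the $\dot H^{s_c}$ and $L^2$ scales, together with $\nabla e^{it\Delta}=e^{it\Delta}\nabla$, to get
$$
\|\Phi(u)\|_{S(\dot H^{s_c})}\le\|e^{it\Delta}u_0\|_{S(\dot H^{s_c})}+c\bigl\||x|^{-b}|u|^{2\sigma}u\bigr\|_{S'(\dot H^{-s_c})},
$$
$$
\|\Phi(u)\|_{S(L^2)}+\|\nabla\Phi(u)\|_{S(L^2)}\le c\|u_0\|_{H^1}+c\bigl\||x|^{-b}|u|^{2\sigma}u\bigr\|_{S'(L^2)}+c\bigl\|\nabla\bigl(|x|^{-b}|u|^{2\sigma}u\bigr)\bigr\|_{S'(L^2)}.
$$
The whole argument then reduces to nonlinear estimates of the schematic form
$$
\bigl\||x|^{-b}|u|^{2\sigma}u\bigr\|_{S'(\dot H^{-s_c})}\lesssim\|u\|_{S(\dot H^{s_c})}^{2\sigma}\bigl(\|u\|_{S(L^2)}+\|\nabla u\|_{S(L^2)}\bigr),
$$
the same bound with $S'(\dot H^{-s_c})$ replaced by $S'(L^2)$ for the last two terms, plus the companion difference estimate controlling $\|\Phi(u)-\Phi(v)\|_{S(\dot H^{s_c})}$ by a prefactor of the form $\bigl(\|u\|_{S(\dot H^{s_c})}^{2\sigma}+\|v\|_{S(\dot H^{s_c})}^{2\sigma}\bigr)$ times $d(u,v)$. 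Granting these, self-mapping in $B$ follows provided $\delta$ is chosen small enough that the nonlinear contributions are absorbed into the factors of $2$; for the $L^2$/gradient bound and for the contraction this requires only a universal smallness $c(2\delta)^{2\sigma}\le\tfrac12$, whereas closing the critical self-map forces a threshold that also involves the $H^1$ bound $\|u_0\|_{H^1}\le\eta$, which is exactly why $\delta=\delta(\eta)$. The same smallness makes the Lipschitz constant strictly less than one; since every norm is taken over $t\in\Real$, the resulting fixed point is global.

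The main obstacle is establishing these weighted nonlinear estimates in dimension $N=2$, and it is here that the improvement to the full range $0<b<1$ is won. The difficulty is twofold: the factor $|x|^{-b}$ is singular at the origin and only slowly decaying at infinity, while the two-dimensional admissible pairs obey the constraint $2\le p<\infty$, so the endpoint $p=\infty$ is forbidden and the naive, purely scale-invariant estimate cannot close --- which is also the structural reason one must borrow the $H^1$-level norms above. The plan is to split $\Real^2$ into the unit ball $B(0,1)$ and its complement, and on each region absorb the weight via H\"older's inequality into a factor $\||x|^{-b}\|_{L^{\gamma}}$ over that region, choosing the exponent so that $b\gamma<N=2$ inside the ball (integrability near the singularity) and $b\gamma>N=2$ outside (integrability at infinity); the remaining Lebesgue exponents are then distributed among admissible pairs and controlled through the Sobolev embedding of $\dot H^{s_c}$ (and of $H^1$) into the relevant $L^p$. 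The term $\nabla\bigl(|x|^{-b}|u|^{2\sigma}u\bigr)$ is treated by the product rule together with $|\nabla(|x|^{-b})|\lesssim|x|^{-b-1}$: the piece in which the derivative falls on the weight produces the more singular factor $|x|^{-b-1}$ and dictates the admissible range of exponents. The crux is therefore the optimization of these exponents --- verifying that, for every $0<b<1$ and $\sigma>\tfrac{2-b}{2}$, valid admissible pairs exist making all the weighted integrals converge simultaneously on both regions; the surrounding functional-analytic scaffolding is then routine.
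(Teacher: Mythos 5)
Your overall architecture is the same as the paper's: a contraction mapping for the Duhamel operator on a ball defined by the critical Strichartz norm and the $H^1$-level Strichartz norms, with the nonlinearity split over $B(0,1)$ and its complement, the weight absorbed by H\"older into $\||x|^{-b}\|_{L^\gamma}$ with $b\gamma<2$ inside and $b\gamma>2$ outside, and the missing endpoint $p=\infty$ in 2D compensated by borrowing the $H^1$ norm through the embedding $H^1(\Real^2)\subset L^p$ for all finite $p$. You also correctly locate where $\delta=\delta(\eta)$ comes from. So the route is right.

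The genuine gap is that you assert, rather than prove, exactly the estimates that constitute the content of the theorem. The whole point of this result is the extension from the previously known range $0<b<\tfrac23$ to $0<b<1$, and that extension is won entirely inside the ``optimization of exponents'' you defer: one must exhibit concrete pairs --- in the paper, $(\widehat q,\widehat r)$, $(\widehat a,\widehat r)$, $(\widetilde a,\widehat r)$ for the critical and $L^2$ estimates and $(\bar q,\bar r)$, $(\bar a,\bar r)$, $(a^*,r^*)$ for the gradient term (Lemma \ref{LG1}) --- verify their admissibility, and check that the weighted integrals converge on both regions simultaneously; the constraint $b<1$ enters precisely in showing that the denominator of $r^*$ is positive and that $\bar r,r^*>\frac{2}{1-s_c}$. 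Without this verification the proof could equally well ``show'' the (false, or at least unproven) statement for $b$ up to $2$. A second, related imprecision: your schematic nonlinear bound places one full power of $u$ in an $H^1$-level Strichartz norm and all $2\sigma$ remaining powers in $S(\dot H^{s_c})$, whereas the estimates that are actually provable in 2D (Lemma \ref{LG1}) take the form $\|u\|_{L^\infty_tH^1_x}^{\theta}\|u\|_{S(\dot H^{s_c})}^{2\sigma-\theta}\|v\|_{S(\dot H^{s_c})}$ for a small $\theta\in(0,2\sigma)$: only a \emph{small} fraction $\theta$ of the powers can be shifted onto $H^1$, because the H\"older bookkeeping in both $x$ and $t$ must remain consistent with admissibility. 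Your version is not obviously achievable, and in any case the time-exponent identities such as $\frac{1}{\widetilde a'}=\frac{2\sigma-\theta}{\widehat a}+\frac{1}{\widehat a}$ that make the argument close are exactly what needs to be checked. Until those estimates are supplied, the proposal is a correct plan but not a proof.
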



Other issues, such as, scattering, minimal mass blow-up solutions and concentration were also investigated for the INLS equation. The second and third authors in \cite{farah2017scattering1}-\cite{farah2019scattering} and \citet{campos2019scattering} proved, for different ranges on the parameter $N$, $\sigma$ and $b$, that radial solutions of the IVP \eqref{PVI} scatter in $H^1(\Real^N)$ in the intercritical case. The radial assumption was removed by Miao, Murphy and Zheng \cite{Murphy20} in the 3D cubic setting. \citet{CG16} obtained the classification of minimal mass finite time blow-up solutions for mass-critical INLS equation. Note that \citet{genoud2012critical} proved the existence of minimal mass finite time blow-up solutions based on the pseudo-conformal transformation applied to a standing wave solution. Finally, Campos and the first author \cite{campos2018critical} studied, also in the mass-critical case, the $L^2$-norm concentration of finite time blow-up solutions. 

Another main purpose of this work is to study some dynamical properties of the blow-up solutions to \eqref{PVI} with initial data in $\dot{H}^{s_c}(\Real^N)\cap \dot{H}^1(\Real^N) $, $0<s_c<1$. To this end, we first need a local theory in this space, since, in view of the lack of mass conservation, this is not a trivial consequence of the local theory in $H^1(\Real^N)$.  
We prove the following result.
\begin{thm}\label{LWP} Let $N\geq3$, $0<b<\min\{\frac{N}{2},2\}$ and $\frac{2-b}{N}<\sigma<\frac{2-b}{N-2}$.  If $u_0 \in \dot{H}^{s_c}(\Real^N)\cap \dot{H}^1(\Real^N) $, then there exist $T > 0$ and a unique solution u to \eqref{PVI} satisfying
	$$
	u\in C\left([-T,T];\dot{H}^{s_c}\cap \dot{H}^1 \right)\bigcap L^q\left([-T,T];\dot{H}^{s_c,p}\cap \dot{H}^{1,p} \right)\bigcap L^a\left([-T,T]; L^r \right),$$
	for any ($q,p$) $L^2$-admissible and $(a,r)$ $\dot H^{s_c}$-admissible\footnote{See \eqref{CPA1}-\eqref{CPA2} below for the definition of $\dot H^{s_c}$-admissible pair.}. 
\end{thm}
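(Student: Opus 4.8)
The plan is to solve the Duhamel formulation
\[
u(t)=e^{it\Delta}u_0+i\int_0^t e^{i(t-s)\Delta}\big(|x|^{-b}|u|^{2\sigma}u\big)(s)\,ds=:\Phi(u)(t)
\]
by a fixed point argument. I would run the contraction on a closed ball of a space $X_T$ whose norm combines the Strichartz norms appearing in the statement, namely
\[
\|u\|_{X_T}=\|D^{s_c}u\|_{S(L^2)}+\|\nabla u\|_{S(L^2)}+\|u\|_{S(\dot H^{s_c})},
\]
where $S(L^2)$ denotes the intersection of $L^q_TL^p_x$ over $L^2$-admissible pairs and $S(\dot H^{s_c})$ the intersection of $L^a_TL^r_x$ over $\dot H^{s_c}$-admissible pairs. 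The first two terms control the $\dot H^{s_c}$ and $\dot H^1$ regularity, while the scattering norm $\|u\|_{S(\dot H^{s_c})}$ is the natural space in which to place the low-order factors of the nonlinearity. The objective is to show that, for $T$ small depending on $u_0$, the map $\Phi$ sends such a ball into itself and is a contraction there.

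The linear piece is handled directly by the homogeneous Strichartz estimates, which give $\|e^{it\Delta}u_0\|_{X_T}\lesssim\|u_0\|_{\dot H^{s_c}}+\|u_0\|_{\dot H^1}$. For the inhomogeneous term I would apply the dual Strichartz estimates, reducing the whole problem to controlling the nonlinearity $F(u)=|x|^{-b}|u|^{2\sigma}u$ and its derivatives in suitable dual norms. Concretely, everything comes down to nonlinear estimates of the schematic form
\[
\|D^{s_c}F(u)\|_{L^{q'}_TL^{p'}_x}+\|\nabla F(u)\|_{L^{q'}_TL^{p'}_x}+\|F(u)\|_{\text{(dual)}}\ \lesssim\ T^{\theta}\,\|u\|_{X_T}^{2\sigma+1}
\]
for some $\theta>0$, together with the corresponding difference estimate controlling $\|F(u)-F(v)\|$ by $T^{\theta}\big(\|u\|_{X_T}^{2\sigma}+\|v\|_{X_T}^{2\sigma}\big)\|u-v\|_{X_T}$. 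The positive power $T^{\theta}$, obtained by H\"older's inequality in time once the admissible exponents are chosen with a little room to spare, is exactly what produces invariance of the ball and contractivity for $T$ small.

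The heart of the argument --- and the main obstacle --- is proving these nonlinear estimates in the presence of the singular weight $|x|^{-b}$. I would split space into $\{|x|\le 1\}$ and $\{|x|>1\}$. On the exterior region $|x|^{-b}\le 1$ is harmless, and the bound reduces to a standard inhomogeneous-NLS computation via the fractional product rule (Kato--Ponce) and the fractional chain rule, using the intercritical range $\frac{2-b}{N}<\sigma<\frac{2-b}{N-2}$ together with Sobolev embedding. On the interior region one must absorb $|x|^{-b}$ using H\"older's inequality, placing it in $L^{\gamma}(\{|x|\le1\})$ for some $\gamma<N/b$; the restriction $b<N/2$ is precisely what leaves room for such an exponent compatible with the Sobolev exponents and the admissibility relations, while $b<2$ keeps the weight in the range where these weighted product estimates close. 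The delicate point is that, after H\"older and the Leibniz rule, every factor --- the weight $|x|^{-b}$, the power $|u|^{2\sigma}$, and the surviving derivative $D^{s_c}u$ or $\nabla u$ --- must be distributed into Lebesgue exponents all controlled by $\|u\|_{X_T}$, while the time exponent is kept strictly subcritical to retain the gain $T^{\theta}$. Carrying this balancing out on both regions simultaneously, and for both $D^{s_c}$ and $\nabla$, is the technical core; the contribution in which a derivative falls on the weight, producing the stronger singularity $|x|^{-b-1}$, is the most sensitive and is where the constraints on $b$ are genuinely used.

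Granting these estimates, the conclusion is routine. Setting $R=2c\big(\|u_0\|_{\dot H^{s_c}}+\|u_0\|_{\dot H^1}\big)$ and choosing $T$ so that $CT^{\theta}R^{2\sigma}\le\frac12$, the map $\Phi$ sends the ball $\{\|u\|_{X_T}\le R\}$ into itself and contracts it, so the Banach fixed point theorem furnishes a unique solution there. The same difference estimate also yields uniqueness and continuous dependence, and the membership of $u$ in every $L^2$- and $\dot H^{s_c}$-admissible Strichartz space asserted in the statement is read off from the fixed-point identity $u=\Phi(u)$ together with the Strichartz inequalities. Finally, the continuity $u\in C\big([-T,T];\dot H^{s_c}\cap\dot H^1\big)$ follows from the standard continuity-in-time argument for the Duhamel integral.
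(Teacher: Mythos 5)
Your overall architecture (Duhamel plus contraction in Strichartz spaces, splitting $|x|\le 1$ versus $|x|>1$, absorbing the weight by H\"older with $\||x|^{-b}\|_{L^\gamma}$, and gaining $T^\theta$ from H\"older in time) matches the paper. But there is a genuine gap in the fixed-point step: you propose to contract in the full norm $\|u\|_{X_T}=\|D^{s_c}u\|_{S(L^2)}+\|\nabla u\|_{S(L^2)}+\|u\|_{S(\dot H^{s_c})}$, and you assert a difference estimate of the form $\|F(u)-F(v)\|\lesssim T^\theta(\|u\|_{X_T}^{2\sigma}+\|v\|_{X_T}^{2\sigma})\|u-v\|_{X_T}$. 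To close the contraction in $X_T$ you would need to control $D^{s_c}\bigl(F(u)-F(v)\bigr)$ and $\nabla\bigl(F(u)-F(v)\bigr)$ linearly in $\|u-v\|_{X_T}$, and this is not available in the stated range: for $N\geq 4$ (and part of $N=3$) one has $2\sigma<1$, so $z\mapsto |z|^{2\sigma}z$ is only $C^{1,2\sigma}$ and the difference of its derivatives produces terms like $|u-v|^{2\sigma}$, destroying the Lipschitz structure at the derivative level. The paper avoids this by contracting only in the weaker metric $d_T(u,v)=\|u-v\|_{S(\dot H^{s_c};I)}$ on the ball $S(m,T)$ of the strong space, using the elementary pointwise bound \eqref{nonlinearity} together with Lemma \ref{lemmacontraction}; this in turn requires verifying that $(S(m,T),d_T)$ is a complete metric space, which is done in Appendix \ref{appA} via weak compactness and lower semicontinuity of the Strichartz norms. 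Your proposal is missing both the weak-metric device and the completeness argument, and without them the Banach fixed point step as written does not go through.

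Two further points where your route diverges from (and is weaker than) the paper's. First, for the $D^{s_c}$ bound you invoke the fractional Leibniz and chain rules; the paper sidesteps fractional calculus entirely by the Sobolev embedding $\|D^{s_c}g\|_{L^{2N/(N+2)}_x}\lesssim\|\nabla g\|_{L^{p^*}_x}$ with $p^*=\frac{2N\sigma}{4\sigma+2-b}$ (Lemma \ref{lema2}), reducing everything to first-order Leibniz estimates. Your route can be made to work but needs the chain rule for $C^1$ nonlinearities with $s_c<1$ stated carefully. Second, discarding the weight on the exterior region via $|x|^{-b}\le 1$ is not harmless here: without mass conservation you have no control of $\|u\|_{L^2}$ or of Lebesgue norms below the $\dot H^{s_c}$ scaling, and the unweighted power $|u|^{2\sigma+1}$ has the wrong scaling to close in $\dot H^{s_c}\cap\dot H^1$ alone. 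The paper instead keeps the decay of the weight at infinity, choosing $\gamma$ with $\frac{N}{\gamma}-b<0$ so that $\||x|^{-b}\|_{L^\gamma(B^C)}<\infty$; this extra integrability is genuinely used to balance the H\"older exponents against admissible pairs.
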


To prove Theorems \ref{GWP}-\ref{LWP}, we use the contraction mapping argument based on the Strichartz estimates related to the linear problem. Here and in what follows, by a solution of the IVP \eqref{PVI} with $u_0\in X$ ($X=H^1(\Real^N) \mbox{ or } X=\dot H^{s_c}(\Real^N)\cap \dot H^1(\Real^N)$), we mean a function $u\in C(I; X)$ on some interval $I\ni 0$ that satisfies the Duhamel formula given by
\begin{align}\label{duhamel}
u(t)=e^{it\Delta}u_0+i\int_{0}^{t}e^{i(t-t')\Delta}|x|^{-b}|u|^{2\sigma}u(t')\,dt', \quad \textnormal{for}\quad  t\in I.
\end{align}

It is worth noticing that the local theory stated in Theorem \ref{LWP} also holds for the defocusing inhomogeneous nonlinear Schr\"odinger (INLS) equation $i \partial_t u +\Delta u - |x|^{-b} |u|^{2\sigma}u = 0$. Moreover, it is still an open problem to obtain the same result for $N=1,2$ in both focusing and defocusing cases.

After we established the local theory in $\dot H^{s_c}(\Real^N)\cap \dot H^{1}(\Real^N)$ for the intercritical INLS equation, we study the asymptotic behavior of the solutions. We recall that Campos and the first author in \cite{campos2018critical} established, using a Sobolev embedding (see Stein-Weiss \citep[Theorem B*]{stein}), the following inequality
\begin{equation}\label{GNcritintr}
\int |x|^{-b}|f|^{2\sigma+2} \, dx\leq c \|\nabla f\|^2_{L^2}\|f\|^{2\sigma}_{L^{\sigma_c}},
\end{equation}
for some $c>0$ and all functions $f\in \dot H^1(\mathbb{R}^N)\cap L^{\sigma_c}(\mathbb{R}^N)$. First, we obtain the best constant for the above inequality.  More precisely, we have the following sharp Gagliardo-Nirenberg type estimate. 
\begin{thm}\label{GNU}
Let $N\geq 1$, $0<b<2$, $\frac{2-b}{N}<\sigma<\frac{2-b}{N-2}$ ($\frac{2-b}{N}<\sigma<\infty$, if $N=1,2$) and $\sigma_c=\frac{2N\sigma}{2-b}$, then the following Gagliardo-Nirenberg inequality holds for all $f\in \dot H^1(\Real^N)\cap L^{\sigma_c}(\Real^N)$
\begin{align}\label{GNsc}
\int_{\Real^N} |x|^{-b} |f(x)|^{2\sigma+2}\,dx\leq \frac{\sigma+1}{\|V\|_{L^{\sigma_c}}^{2\sigma}}\|\nabla f\|_{L^2}^2\|f\|_{L^{\sigma_c}}^{2\sigma},
\end{align}
 where $V$ is a solution to the elliptic equation, 
\begin{align}\label{elptcpc}
\Delta V+|x|^{-b}|V|^{2\sigma}V-|V|^{\sigma_c-2}V=0
\end{align}
with minimal $L^{\sigma_c}$-norm.


\end{thm}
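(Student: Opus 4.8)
The plan is to realize the sharp constant as the reciprocal of the infimum of a Weinstein-type quotient and then to locate and evaluate an extremal. For $f\in\dot H^1\cap L^{\sigma_c}$, $f\not\equiv0$, set
\[
J(f)=\frac{\|\nabla f\|_{L^2}^2\,\|f\|_{L^{\sigma_c}}^{2\sigma}}{\int_{\Real^N}|x|^{-b}|f|^{2\sigma+2}\,dx},
\]
so that \eqref{GNsc} is equivalent to $J_{\min}:=\inf_{f\neq0}J(f)>0$ together with the identification of the best constant as $C_{GN}=J_{\min}^{-1}$. The bound \eqref{GNcritintr} already gives $J_{\min}>0$, so the whole content is to show that the infimum is attained, to match the Euler--Lagrange equation of a minimizer with \eqref{elptcpc} after rescaling, and to compute $J_{\min}$ in terms of $\|V\|_{L^{\sigma_c}}$. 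A preliminary observation I would record is that, precisely because $\sigma_c=\frac{2N\sigma}{2-b}$, all three homogeneities line up and $J$ is invariant under the two-parameter family $f\mapsto\lambda f(\mu\,\cdot)$, $\lambda,\mu>0$.

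For existence I would work in the natural variational space $\dot H^{s_c}\cap\dot H^1$, in which the key compact embedding into the weighted space $L^{2\sigma+2}(|x|^{-b}\,dx)$ is available. Taking a minimizing sequence and using the two scalings to normalize $\|\nabla f_n\|_{L^2}=\|f_n\|_{L^{\sigma_c}}=1$, the sequence is bounded and the denominator tends to $C_{GN}>0$; passing to a weak limit $V$, the crux is to upgrade weak to strong convergence of the denominator. This is the main obstacle, and it is exactly where the compact embedding enters: unlike the homogeneous case, the weight $|x|^{-b}$ with $b>0$ destroys translation invariance and pins mass near the origin, which I would exploit by splitting into a neighborhood of the origin (Rellich--Kondrachov, with the singular weight still integrable there) and its exterior (where the extra $\dot H^{s_c}$/Sobolev control suppresses the tail), so that no mass escapes to infinity. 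With strong convergence of the denominator and weak lower semicontinuity of $\|\nabla\cdot\|_{L^2}$ and $\|\cdot\|_{L^{\sigma_c}}$, the nonzero limit $V$ realizes $J(V)=J_{\min}$.

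Next I compute the first variation $\frac{d}{d\varepsilon}J(V+\varepsilon\varphi)\big|_{\varepsilon=0}=0$, which yields
\[
\Delta V+\frac{(\sigma+1)A}{C}\,|x|^{-b}|V|^{2\sigma}V-\frac{\sigma A}{B}\,|V|^{\sigma_c-2}V=0,
\]
with $A=\|\nabla V\|_{L^2}^2$, $B=\|V\|_{L^{\sigma_c}}^{\sigma_c}$ and $C=\int|x|^{-b}|V|^{2\sigma+2}\,dx$. Since $J$ is scale invariant, the rescaled function $\lambda V(\mu\,\cdot)$ is again a minimizer, and choosing $\lambda,\mu>0$ to solve the two algebraic conditions that normalize both coefficients to $1$ turns this into \eqref{elptcpc}.

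Finally, to evaluate the constant I would run two Pohozaev-type identities for a decaying solution $V$ of \eqref{elptcpc}, obtained by pairing the equation with $\bar V$ and with $x\cdot\nabla V$ (equivalently, by differentiating the associated action at $\lambda=\mu=1$ along the same two scalings). The first gives the Nehari relation $A-C+B=0$ and the second the dilation relation $\tfrac{2-N}{2}A-\tfrac{b-N}{2\sigma+2}C-\tfrac{N}{\sigma_c}B=0$; using $\tfrac{N}{\sigma_c}=\tfrac{2-b}{2\sigma}$, these two linear relations force $C=(\sigma+1)A$ and $B=\sigma A$. Substituting into $J(V)=A\,\|V\|_{L^{\sigma_c}}^{2\sigma}/C$ gives $J_{\min}=\|V\|_{L^{\sigma_c}}^{2\sigma}/(\sigma+1)$, hence $C_{GN}=(\sigma+1)/\|V\|_{L^{\sigma_c}}^{2\sigma}$. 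Because every solution of \eqref{elptcpc} satisfies the same Pohozaev relations, all of them share the value $J(V)=\|V\|_{L^{\sigma_c}}^{2\sigma}/(\sigma+1)$, so minimizing $J$ over solutions is the same as selecting the one of least $L^{\sigma_c}$-norm, which is the $V$ in the statement. The sharp inequality then propagates from $\dot H^{s_c}\cap\dot H^1$ to all of $\dot H^1\cap L^{\sigma_c}$ by density, since both sides of \eqref{GNsc} depend only on $\|\nabla f\|_{L^2}$ and $\|f\|_{L^{\sigma_c}}$ and $C_c^\infty\subset\dot H^{s_c}\cap\dot H^1$ is dense in the relevant topology, while $V\in\dot H^{s_c}\cap\dot H^1$ shows the constant is still attained.
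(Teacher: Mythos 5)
Your proposal is correct and follows essentially the same route as the paper: the Weinstein functional, a scaling-normalized minimizing sequence, the compact embedding into the weighted $L^{2\sigma+2}$ space of Proposition \ref{WSC}, the Euler--Lagrange equation rescaled to \eqref{elptcpc}, and the Pohozaev identities to evaluate the infimum as $\|V\|_{L^{\sigma_c}}^{2\sigma}/(\sigma+1)$ and to conclude that $V$ has minimal $L^{\sigma_c}$-norm among solutions. The only cosmetic difference is that the paper minimizes directly over $\dot H^1\cap L^{\sigma_c}$ --- which is where Proposition \ref{WSC} is actually stated and where your normalization $\|\nabla f_n\|_{L^2}=\|f_n\|_{L^{\sigma_c}}=1$ genuinely gives boundedness (it does not control $\|f_n\|_{\dot H^{s_c}}$) --- so the concluding density step from $\dot H^{s_c}\cap\dot H^1$ to $\dot H^1\cap L^{\sigma_c}$ becomes unnecessary.
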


Although uniqueness of solutions for the elliptic equation \eqref{elptcpc} is not known,  this will not be an issue to our purpose since the sharp constant depends only on the $L^{\sigma_c}$-norm of the solution. The proof of Theorem \ref{GNU} relies mainly in a compact embedding result (see Proposition \ref{WSC} below) generalizing the one obtained by \citet{g_8}. Moreover, since $\dot H^{s_c}(\mathbb{R}^N)\subset  L^{\sigma_c} (\mathbb{R}^N)$, Theorem \ref{GNU} allows us to establish sufficient conditions
for global existence in $\dot H^{s_c}(\mathbb{R}^N)\cap \dot H^1(\mathbb{R}^N)$. 
\begin{thm}\label{global}
Let $N\geq3$, $0<b<\min\{\frac{N}{2},2\}$, $\frac{2-b}{N}<\sigma<\frac{2-b}{N-2}$, $s_c=\frac{N}{2}-\frac{2-b}{2\sigma}$ and $\sigma_c=\frac{2N\sigma}{2-b}$. For $u_0\in \dot H^{s_c}(\mathbb{R}^N)\cap \dot H^1(\mathbb{R}^N)$, let $u(t)$ be the corresponding solution to \eqref{PVI} given by Theorem \ref{LWP} and $T^*>0$ the maximal time of existence. Suppose that $\sup_{t\in [0,T^*)}\|u(t)\|_{\dot H^{s_c}}<\|V\|_{L^{\sigma_c}}$, where $V$ is a solution of the elliptic equation \eqref{elptcpc} with minimal $L^{\sigma_c}$-norm. Then $u(t)$ exists globally in the time.
\end{thm}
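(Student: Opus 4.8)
The plan is to run a contradiction argument against the continuation criterion underlying the local theory. Suppose $T^*<\infty$; then Theorem \ref{LWP} forces $\limsup_{t\to T^*}\|u(t)\|_{\dot H^{s_c}\cap\dot H^1}=\infty$. Since the hypothesis already provides a uniform bound on $\|u(t)\|_{\dot H^{s_c}}$ over $[0,T^*)$, it is enough to produce a uniform bound on the complementary piece $\|\nabla u(t)\|_{L^2}=\|u(t)\|_{\dot H^1}$; such a bound contradicts the blow-up alternative and forces $T^*=\infty$.

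To bound $\|\nabla u(t)\|_{L^2}$ I would use the energy. Although mass is unavailable here, the functional $E[u]$ in \eqref{energy} is well defined on $\dot H^{s_c}\cap\dot H^1$: the gradient term is finite because $u\in\dot H^1$, and the potential term is finite by the sharp inequality \eqref{GNsc} combined with the embedding $\dot H^{s_c}\subset L^{\sigma_c}$. The first substantial step is to prove that $E[u(t)]=E[u_0]$ along the flow. Because the solution of Theorem \ref{LWP} is constructed by a Strichartz contraction, I expect conservation to follow by regularizing $u_0$ into smooth data, invoking the known $H^1$ energy conservation for the approximate solutions, and passing to the limit using the continuous dependence furnished by the local theory together with the continuity of the potential term in $\dot H^{s_c}\cap\dot H^1$.

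Granting conservation, I rewrite \eqref{energy} as
\begin{align*}
\|\nabla u(t)\|_{L^2}^2=2E[u_0]+\frac{1}{\sigma+1}\int_{\Real^N}|x|^{-b}|u(t)|^{2\sigma+2}\,dx,
\end{align*}
and estimate the integral by \eqref{GNsc}, which gives
\begin{align*}
\|\nabla u(t)\|_{L^2}^2\left(1-\frac{\|u(t)\|_{L^{\sigma_c}}^{2\sigma}}{\|V\|_{L^{\sigma_c}}^{2\sigma}}\right)\le 2E[u_0].
\end{align*}
The hypothesis $\sup_{[0,T^*)}\|u(t)\|_{\dot H^{s_c}}<\|V\|_{L^{\sigma_c}}$, transported through the embedding $\dot H^{s_c}\subset L^{\sigma_c}$, controls $\|u(t)\|_{L^{\sigma_c}}$ uniformly below $\|V\|_{L^{\sigma_c}}$; since the supremum lies strictly below $\|V\|_{L^{\sigma_c}}$ there is a genuine gap, so the parenthetical factor is bounded below by some $\delta>0$ uniformly in $t$. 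Consequently $\|\nabla u(t)\|_{L^2}^2\le 2E[u_0]/\delta$ on $[0,T^*)$ (which in particular forces $E[u_0]\ge 0$), the desired uniform bound.

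The step I expect to be the main obstacle is the justification of energy conservation in $\dot H^{s_c}\cap\dot H^1$: unlike the $H^1$ theory there is no conserved mass to lean on, so one must check that the regularization and limit procedure is compatible with the mixed critical--subcritical Strichartz spaces of Theorem \ref{LWP}, and that the potential term passes to the limit (where \eqref{GNsc} and the embedding are used once more). A secondary, but necessary, point is the quantitative transfer of the $\dot H^{s_c}$-smallness in the hypothesis into the uniform $L^{\sigma_c}$-gap $\delta>0$ via the continuity of $\dot H^{s_c}\subset L^{\sigma_c}$, which is exactly what closes the estimate.
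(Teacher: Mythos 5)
Your proposal is correct and follows essentially the same route as the paper: energy conservation plus the sharp Gagliardo--Nirenberg inequality of Theorem \ref{GNU} and the embedding $\dot H^{s_c}\subset L^{\sigma_c}$ yield a uniform bound on $\|\nabla u(t)\|_{L^2}$, which contradicts the blow-up alternative of Corollary \ref{Blowalt} if $T^*<\infty$. The only difference is that you explicitly flag the justification of energy conservation in $\dot H^{s_c}\cap\dot H^1$ as a step requiring a regularization argument, whereas the paper invokes it without comment.
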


Finally, we treat the phenomenon of $L^{\sigma_c}$-norm concentration in the intercritical regime for finite time blow-up solutions in $\dot H^{s_c}(\mathbb{R}^N)\cap \dot H^1(\mathbb{R}^N)$. We first recall that in $H^1(\Real^N)$ a simple criterion for the existence of finite time blow-up solutions to \eqref{PVI} was obtained by the second author in \cite{Farah_well}. Indeed, considering $u_0\in \Sigma:=\{f\in H^1;\,\,|x|f\in L^2 \}$, then the corresponding solution to \eqref{PVI} satisfies the virial identity
\begin{align}
\frac{d^2}{dt}\int |x|^2|u(x,t)|^2=8(2\sigma s_c+2)E[u_0]-8\sigma s_c\|\nabla u(t)\|_{L^2}^2.
\end{align}
From this identity, we immediately see that, in the intercritical case, if $E\left[u_0 \right] < 0$, then the graph of $t \mapsto \int |x|^2 |u|^2$ lies below a parabola whose concavity is facing
down, which becomes negative in finite time. Therefore, the solution cannot exist globally and blows up in finite time. \citet{D18} proved the same result assuming radial negative energy initial data (also, when $N=1$, radial symmetry can also be removed). Since $H^1(\Real^N) \subset \dot H^{s_c}(\mathbb{R}^N)\cap \dot H^1(\mathbb{R}^N)$, these results also ensure the existence of finite time blow-up solutions to \eqref{PVI} for some initial data $u_0\in \dot H^{s_c}(\mathbb{R}^N)\cap \dot H^1(\mathbb{R}^N)$.

Here, as in \citet{guo2013note}, we suppose that the finite time blow-up solution to \eqref{PVI} is of type II, that is, the maximal time of existence $T^{\ast}>0$ is finite and the critical norm remains bounded
\begin{align}\label{condbounded}
\sup_{t\in [0,T^*)}\|u(t)\|_{\dot H^{s_c}}<\infty.
\end{align} 
It must be noted that \citet{MR_Bsc} showed the existence of radially symmetric finite time blow-up solutions to the NLS equation in $\dot H^{s_c}(\mathbb{R}^N)\cap \dot H^1(\mathbb{R}^N)$ that \eqref{condbounded} does not occurs. Furthermore, they provided a lower bound for blow-up rate of the critical norm for these solution. However, their proof does not apply to the non-radial case and it may be possible to have finite time blow-up solutions satisfying \eqref{condbounded} in this case.

As a consequence of the proof of Theorem \ref{global} (see Remark \ref{RemGWP}), if there exists a solution $u(t)$ that blows up in finite time $T^*>0$ satisfying \eqref{condbounded}, then we must have
\begin{align}
\sup_{t\in [0,T^*)}\|u(t)\|_{L^{\sigma_c}}\geq \|V\|_{L^{\sigma_c}},
\end{align}
where $V$ is a solution to elliptic equation \eqref{elptcpc} with minimal $L^{\sigma_c}$ - norm.
This suggests us to investigate the occurrence of the $L^{\sigma_c}$-norm concentration for finite time blow-up solutions satisfying \eqref{condbounded}. \citet{guo2013note} obtained such concentration for the NLS equation in the intercritical case (and without radial symmetry), partially generalizing the results obtained by \citet{holmer2007blow}, which deals with the radial 3D cubic NLS equation (see also Campos the first author \cite{campos2018critical} for a similar result in the INLS setting).


Using a profile decomposition technique several authors has studied the critical norm concentration of finite time blow-up solutions (see e.g \citet{HM}, \citet{guo2013note}, Pigott and the second author \cite{farah2016nonlinear}, Campos and the first author \cite{campos2018critical} and  \citet{dinh2018study}) for various dispersive models. However, in this work we use a different approach based on a compact embedding result (see Proposition \ref{WSC} below). Our main result in this direction is the following.

\begin{thm}\label{concentration} Let $N\geq3$, $0<b<\min\{\frac{N}{2},2\}$ and $\frac{2-b}{N}<\sigma<\frac{2-b}{N-2}$. For $u_0\in \dot H^{s_c}(\mathbb{R}^N)\cap \dot H^1(\mathbb{R}^N)$,  let $u(t)$ be the corresponding solution to \eqref{PVI} given by Theorem \ref{LWP} and assume that it blows up in finite time $T^*>0$ satisfying
\eqref{condbounded}.
If $\lambda (t)>0$ is such that
\begin{align}\label{lamb}
\lambda(t)\|\nabla u(t)\|_{L^2}^{\frac{1}{1-s_c}}\to  \infty, \quad \textit{as} \quad t\to T^*,
\end{align}
then,
\begin{align}\label{Conc0}
\liminf_{t\to T^*} \int_{|x|\leq \lambda(t)}|u(x,t)|^{\sigma_c}\,dx\geq \|V\|^{\sigma_c}_{L^{\sigma_c}},
\end{align}
where $V$ is a minimal $L^{\sigma_c}$-norm solution to the elliptic equation \eqref{elptcpc}.
\end{thm}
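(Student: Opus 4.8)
The plan is to rescale the blow-up solution along a sequence of times so that its $\dot H^1$-norm is normalized to $1$, extract a nontrivial weak limit via the compact embedding of Proposition~\ref{WSC}, and then use the sharp Gagliardo--Nirenberg inequality of Theorem~\ref{GNU} to force that limit to carry at least the $L^{\sigma_c}$-mass of the ground state. Concretely, let $t_n\to T^*$ be a sequence realizing the liminf in \eqref{Conc0}. Since the blow-up is of type II, \eqref{condbounded} holds while the solution cannot be extended past $T^*$; by the local theory (Theorem~\ref{LWP}) this forces $\|\nabla u(t_n)\|_{L^2}\to\infty$. I then set $\rho_n=\|\nabla u(t_n)\|_{L^2}^{-1/(1-s_c)}$ and define the rescaled profiles
\begin{align}
v_n(x)=\rho_n^{\frac{2-b}{2\sigma}}\,u(\rho_n x,t_n).
\end{align}
A direct computation with the scaling exponents gives $\|\nabla v_n\|_{L^2}=1$, while $\|v_n\|_{\dot H^{s_c}}=\|u(t_n)\|_{\dot H^{s_c}}$ and $\|v_n\|_{L^{\sigma_c}}=\|u(t_n)\|_{L^{\sigma_c}}$ are scale invariant; in particular $\{v_n\}$ is bounded in $\dot H^{s_c}\cap\dot H^1$ by \eqref{condbounded}.

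Next I use conservation of energy \eqref{energy}. Writing $\int|x|^{-b}|u(t_n)|^{2\sigma+2}\,dx=(\sigma+1)\|\nabla u(t_n)\|_{L^2}^2-(2\sigma+2)E[u_0]$ and applying the same scaling identity that normalized the gradient, I obtain
\begin{align}
\int_{\Real^N}|x|^{-b}|v_n|^{2\sigma+2}\,dx=\frac{\int_{\Real^N}|x|^{-b}|u(t_n)|^{2\sigma+2}\,dx}{\|\nabla u(t_n)\|_{L^2}^2}\longrightarrow \sigma+1,
\end{align}
since $\|\nabla u(t_n)\|_{L^2}\to\infty$. Now Proposition~\ref{WSC} applies: passing to a subsequence, $v_n\rightharpoonup Q$ weakly in $\dot H^{s_c}\cap\dot H^1$ and $v_n\to Q$ strongly in the weighted space $L^{2\sigma+2}(|x|^{-b}\,dx)$, so that $\int|x|^{-b}|Q|^{2\sigma+2}\,dx=\sigma+1$; in particular $Q\not\equiv 0$. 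To identify the $L^{\sigma_c}$-size of $Q$, I apply \eqref{GNsc} to $f=Q$ together with the weak lower semicontinuity $\|\nabla Q\|_{L^2}^2\leq\liminf_n\|\nabla v_n\|_{L^2}^2=1$, obtaining $\sigma+1=\int|x|^{-b}|Q|^{2\sigma+2}\,dx\leq \frac{\sigma+1}{\|V\|_{L^{\sigma_c}}^{2\sigma}}\|Q\|_{L^{\sigma_c}}^{2\sigma}$, whence $\|Q\|_{L^{\sigma_c}}\geq\|V\|_{L^{\sigma_c}}$, with $V$ a minimal $L^{\sigma_c}$-norm solution of \eqref{elptcpc}.

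Finally I transfer this back to the physical variable. The exponents are arranged so that the truncated $L^{\sigma_c}$-mass is scale invariant: with $R_n=\lambda(t_n)\|\nabla u(t_n)\|_{L^2}^{1/(1-s_c)}$ one has $\int_{|x|\le R_n}|v_n|^{\sigma_c}\,dx=\int_{|y|\le\lambda(t_n)}|u(t_n)|^{\sigma_c}\,dy$, and by hypothesis \eqref{lamb} we have $R_n\to\infty$. Since boundedness of $\{v_n\}$ in $\dot H^{s_c}$ gives, through the critical embedding $\dot H^{s_c}\hookrightarrow L^{\sigma_c}$, that $v_n\rightharpoonup Q$ weakly in $L^{\sigma_c}(\Real^N)$, the weak lower semicontinuity of the norm on each fixed ball $B_R$ yields $\liminf_n\int_{B_R}|v_n|^{\sigma_c}\,dx\ge\int_{B_R}|Q|^{\sigma_c}\,dx$. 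Because $R_n\to\infty$, I first bound the integral over $B_{R_n}$ from below by the one over a fixed $B_R$ and then let $R\to\infty$, obtaining $\liminf_n\int_{|y|\le\lambda(t_n)}|u(t_n)|^{\sigma_c}\,dy\ge\|Q\|_{L^{\sigma_c}}^{\sigma_c}\ge\|V\|_{L^{\sigma_c}}^{\sigma_c}$; as $t_n$ realized the liminf, this proves \eqref{Conc0}. The main obstacle is precisely this last step: the embedding $\dot H^{s_c}\hookrightarrow L^{\sigma_c}$ is scaling critical and hence \emph{not} locally compact, so one cannot upgrade to strong local $L^{\sigma_c}$-convergence; the argument must therefore rely solely on weak lower semicontinuity over fixed balls combined with the expansion $R_n\to\infty$, while the genuine compactness that makes $Q$ nontrivial is available only at the weighted exponent $2\sigma+2$ through Proposition~\ref{WSC}.
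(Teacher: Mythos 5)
Your proposal is correct and follows essentially the same route as the paper: the same rescaling $v_n(x)=\rho_n^{\frac{2-b}{2\sigma}}u(\rho_n x,t_n)$ with $\rho_n^{1-s_c}\|\nabla u(t_n)\|_{L^2}=1$, compactness of $\dot H^{s_c}\cap\dot H^1\subset L^{2\sigma+2}_b$ from Proposition~\ref{WSC}, the sharp inequality \eqref{GNsc} to get $\|Q\|_{L^{\sigma_c}}\geq\|V\|_{L^{\sigma_c}}$, and weak lower semicontinuity of the $L^{\sigma_c}$-norm on expanding balls. Your variant of phrasing the energy step as $\int|x|^{-b}|v_n|^{2\sigma+2}\,dx\to\sigma+1$ (rather than $E(v_n)\to 0$) is equivalent and in fact makes the nontriviality of the limit $Q$ slightly more explicit.
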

\begin{rem} Two important comments about this result have to be emphasized. First, as a consequence of Theorem \ref{LWP} and under the assumption \eqref{condbounded}, it is possible to deduce the existence of $\lambda(t)>0$ such that $\lambda(t)\rightarrow 0$, as $t\rightarrow T^*$. Indeed, $\lambda(t)=(T^*-t)^{\alpha}$ with $0<\alpha<\frac{1}{2}$ satisfies the assumptions of Theorem \ref{concentration} (see Remark \ref{eqtime} below). Second, the inequality \eqref{Conc0} asserts that the concentration occurs at the origin even for non radial finite time blow-up solutions satisfying \eqref{condbounded}. This is due to our method of proof based on the compact embedding result stated in Proposition \ref{WSC}. Recall that for the NLS equation concentration at the origin appears in the radial case (see \citet{MT} and \citet{T90}), however, for the non radial case \citet{HM} and \citet{guo2013note} obtained that the concentration occurs at some point of space (not necessary at the origin).
\end{rem}

We want to point out that the last two theorems are stated under the same assumptions as in the local theory in $\dot H^{s_c}(\mathbb{R}^N)\cap \dot H^1(\mathbb{R}^N)$ from Theorem \ref{LWP}. However, if one can improve the range of the parameters in Theorem \ref{LWP}, then Theorems \ref{global}-\ref{concentration} will be also true (with the same proof given here) in the same range. This is due to the fact that the sharp Gagliardo-Nirenberg type inequality given by Theorem \ref{GNU} (and also the compact embedding result by Proposition \ref{WSC} below) holds for the intercritical case in all dimensions $N\geq 1$ and $0<b<2$.

The rest of the paper is organized as follows. In Section \ref{sec2}, we introduce some notations and preliminary estimates. In Section \ref{sec3}, we obtain the well-posedness results stated in Theorems \ref{GWP}-\ref{LWP}. In Section \ref{sec4}, we prove the sharp Gagliardo-Nirenberg inequality in Theorem \ref{GNU} and use it to deduce Theorem \ref{global}. Section \ref{CNC} is devoted to the proof of Theorem \ref{concentration}. Finally, in Section \ref{ACR} we present another concentration result for special solutions of the INLS equation.

\section{Notation and Preliminaries}\label{sec2}
We start this section by introducing the notation used throughout the
paper.  We use $c$ to denote various constants that may vary line by line. Let $a$ and $b$ be positive real numbers, the
notation $a \lesssim b$ means that there exists a positive constant $c$ such that $a \leq cb$. Given a real number $r$, we use $r^+$ and $r^-$ to denote $r+\varepsilon$ and $r-\varepsilon$, respectively, for some $\varepsilon>0$ sufficiently small. For a subset $A\subset \mathbb{R}^N$, its complement is denoted by $A^C=\mathbb{R}^N \backslash A$ and the characteristic function $\chi_A(x)$ denotes the function that has value 1 at points of A and 0 at points of $A^C$. Given $x,y \in \mathbb{R}^N$, $x \cdot y$ denotes the usual inner product of $x$ and $y$ in $\mathbb{R}^N$. For a number $p\in [1,\infty]$ we denote its H\"older dual by $p'=\frac{p}{p-1}$, satisfying $\frac{1}{p}+\frac{1}{p'}=1$.

We use $\|f\|_{L^p}$ to denote the $L^p(\mathbb{R}^N)$ norm. The Schwartz class functions is denoted by $\mathcal S(\Real^N)$. The norm in the Sobolev spaces $H^{s,p}=H^{s,p}(\mathbb{R}^N)$ and $\dot{H}^{s,p}=\dot{H}^{s,p}(\mathbb{R}^N)$, are defined, respectively, by $\|f\|_{H^{s,p}}:=\|J^sf\|_{L^p}$ and $\|f\|_{\dot{H}^{s,p}}:=\|D^sf\|_{L^p},$
where  $J^s$ and $D^s$ stand for the Bessel and Riesz potentials of order $s$, given via Fourier transform by $\widehat{J^s f}=(1+|\xi|^2)^{\frac{s}{2}}\widehat{f}$ and $\widehat{D^sf}=|\xi|^s\widehat{f}.
$
If $p=2$ we denote $H^{s,2}$ and $\dot{H}^{s,2}$ simply by $H^s$ and  $\dot{H}^{s}$, respectively.

Let $ q,p >0$, $s\in \mathbb{R}$, and $I\subset \mathbb{R}$ an interval; the mixed norms in the spaces $L^q_{I}L^p_x$ and $L^q_{I} H^s_x$ of a function $f=f(x,t)$ are defined as
$$
\|f\|_{L^q_{I}L^p_x}=\left(\int_I\|f(\cdot, t)\|^q_{L^p_x}dt\right)^{\frac{1}{q}}
\qquad
\mbox{and}
\qquad
\|f\|_{L^q_{I}H^s_x}=\left(\int_I\|f(\cdot, t)\|^q_{H^s_x}dt\right)^{\frac{1}{q}},
$$
with the usual modifications if either $q=\infty$ or $p=\infty$. When the  $x$-integration is restricted to a subset $A\subset\mathbb{R}^N$ then the Lebesgue norm and the mixed norm will be denoted by $\|f\|_{L^p(A)}$ and $\|f\|_{L_I^qL^r_x(A)}$, respectively. Moreover, if $I=\mathbb{R}$ we shall use the notations $\|f\|_{L_t^qL^p_x}$ and $\|f\|_{L_t^qH^s_x}$.

We now recall some useful inequalities.
\begin{lemma}\textbf{(Sobolev embedding)}\label{SI} Let $s>0$ and $1\leq p<\infty$.
\begin{itemize}
\item [(i)] If $s\in (0,\frac{N}{p})$ then $H^{s,p}(\mathbb{R}^N)$ is continuously embedded in $L^r(\mathbb{R^N})$ where $s=\frac{N}{p}-\frac{N}{r}$. Moreover, 
\begin{equation}\label{SEI} 
\|f\|_{L^r}\leq C(N,s)\|D^sf\|_{L^{p}}.
\end{equation}
\item [(ii)] If $s=\frac{N}{2}$ then $H^{s}(\mathbb{R}^N)\subset L^r(\mathbb{R^N})$ for all $r\in[2,\infty)$. Furthermore,
\begin{equation}\label{SEI1} 
\|f\|_{L^r}\leq c\|f\|_{H^{s}}.
\end{equation}
\end{itemize}
\begin{proof} We refer to \citet[Theorem $6.5.1$]{BERLOF} for a complete proof (see also \citet[Theorem $3.3$]{LiPo15} and \citet[Proposition 4.18]{DEMENGEL}). 
\end{proof}
\end{lemma}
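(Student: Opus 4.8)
The plan is to reduce both parts of the lemma to the mapping properties of the Riesz potential, that is, to the Hardy--Littlewood--Sobolev inequality, which is the analytic core of fractional Sobolev embeddings.

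For part (i), I would first represent $f$ through its Riesz potential. For $f\in\mathcal S(\mathbb{R}^N)$, since $\widehat{D^sf}(\xi)=|\xi|^s\widehat f(\xi)$, one has the identity $f=I_s(D^sf)$, where $I_s$ denotes the Riesz potential of order $s$, acting as convolution with the kernel $c_{N,s}|x|^{-(N-s)}$. The key structural fact is that this kernel belongs to the weak Lebesgue space $L^{N/(N-s),\infty}(\mathbb{R}^N)$. I would then invoke the weak-type Young (O'Neil) inequality, equivalently the Hardy--Littlewood--Sobolev inequality, to obtain $\|I_sg\|_{L^r}\lesssim\|g\|_{L^p}$ exactly when $\frac1r=\frac1p-\frac sN$, i.e. $s=\frac Np-\frac Nr$, under the restriction $1<p<\frac Ns$. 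Taking $g=D^sf$ yields \eqref{SEI} on $\mathcal S(\mathbb{R}^N)$, and a density argument extends it to all of $H^{s,p}(\mathbb{R}^N)$. The hypothesis $s\in(0,N/p)$ is precisely the condition $p<N/s$ ensuring $r<\infty$.

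For part (ii), I would deduce the borderline case $s=N/2$ from part (i) by stepping just below the critical exponent. The endpoint $r=2$ is immediate from Plancherel, since $\|f\|_{L^2}\le\|f\|_{H^{N/2}}$. For $2<r<\infty$, I set $s':=\frac N2-\frac Nr\in(0,\frac N2)$ and apply part (i) with $p=2$ to get $\|f\|_{L^r}\lesssim\|D^{s'}f\|_{L^2}$; the elementary bound $|\xi|^{s'}\le(1+|\xi|^2)^{N/4}$, valid because $s'\le N/2$, together with Plancherel then gives $\|D^{s'}f\|_{L^2}\le\|f\|_{H^{N/2}}$, and chaining these proves \eqref{SEI1}. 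I would emphasize that this scheme necessarily degenerates as $r\to\infty$, since then $s'\to N/2=N/p$ violates the open condition in part (i); this is the familiar manifestation of the failure of $H^{N/2}\hookrightarrow L^\infty$.

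The main obstacle is establishing the Hardy--Littlewood--Sobolev inequality itself, on which everything rests. I would prove it by verifying that the Riesz kernel lies in weak-$L^{N/(N-s)}$ and then running a Marcinkiewicz real-interpolation argument between weak-type endpoints, or alternatively by splitting the convolution kernel into its near and far parts and estimating each directly. A secondary delicate point is the exclusion of the endpoints $p=1$ and $r=\infty$: for $p=1$ the strong-type bound fails and the correct endpoint is the Gagliardo--Nirenberg--Sobolev inequality, which must be obtained by a direct, non-Fourier argument; accordingly I would either restrict to $p>1$ or treat $p=1$ separately. Given these classical ingredients the lemma follows, which is why it is reasonable here simply to cite \citet{BERLOF} rather than reproduce the argument.
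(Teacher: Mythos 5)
Your proposal is correct and is essentially the argument behind the paper's proof, which is only a citation: the Riesz-potential representation $f=I_s(D^sf)$ combined with the Hardy--Littlewood--Sobolev inequality, followed by the reduction of the borderline case $s=\frac{N}{2}$ to part (i) with $p=2$ via Plancherel, is exactly the standard proof contained in the references the paper points to (\citet[Theorem 3.3]{LiPo15} in particular proves the embedding this way). One point you rightly flag and should keep: the HLS route forces $1<p<\frac{N}{s}$, so the lemma's stated hypothesis $1\leq p$ is slightly too generous for this method at the endpoint $p=1$ (where the strong-type Riesz potential bound fails and one would need a separate Gagliardo--Nirenberg--Sobolev-type argument); this is harmless for the paper, since every application of the lemma there --- for instance \eqref{SEsc} with $p=2$, or the embedding used in Lemma \ref{lema2} --- involves only exponents $p>1$.
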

In particular, we have 
\begin{align}\label{SEsc}
\|f\|_{L^p}\leq C(N,s)\|f\|_{\dot H^s},\quad \textnormal{for all} \quad f\in \dot H^s(\Real^N),
\end{align}
where $p=\frac{2N}{N-2s}$. Moreover, for $s_c=\frac{N}{2}-\frac{2-b}{2\sigma}$, we have $\dot H^{s_c}(\mathbb{R}^N)\subset L^{\sigma_c}(\mathbb{R}^N)$, where $\sigma_c=\frac{2N\sigma}{2-b}=\frac{2N}{N-2s_c}.$

\begin{lemma}\textbf{(Gagliardo-Nirenberg's inequality)}\label{GNgeral}
	 Consider $1\leq p,q,r\leq \infty$ and let $j,m$ be two integers, $0\leq j<m$. If 
	\begin{align}
	\frac{1}{q}-\frac{j}{N}=\theta\left(\frac{1}{r}-\frac{m}{N}\right)+\frac{1-\theta}{p},
	\end{align}
	for some $\theta\in \left[\frac{j}{m},1\right]$ ($\theta<1$ if $r>1$ and $m-j-\frac{N}{r}=0$), then there exists a constant $c=c(j,m,p,q,r)$ such that
	\begin{align}\label{GNgeral2}
	\sum_{|\alpha|=j}\|D^\alpha f\|_{L^q}\leq c\left(\sum_{|\beta|=m}\|D^{\beta} f\|_{L^r}\right)^\theta\|f\|_{L^p}^{1-\theta}
	\end{align}
	for all $f\in \mathcal S(\Real^N)$.
\end{lemma}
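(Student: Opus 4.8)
The plan is to prove this by a Littlewood--Paley decomposition combined with Bernstein's inequalities and an optimization of the frequency cutoff; this fits the Fourier-analytic tools already used above and makes the role of the exponent relation transparent. (The classical alternative, reducing to a one-dimensional estimate and tensorizing as in Nirenberg's original argument, would also work but is messier to organize.)

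First I would reduce the statement involving the sums of partial derivatives $\sum_{|\alpha|=j}\|D^\alpha f\|_{L^q}$ and $\sum_{|\beta|=m}\|D^\beta f\|_{L^r}$ to the homogeneous Riesz seminorms $\|D^j f\|_{L^q}$ and $\|D^m f\|_{L^r}$ (with $D^s$ as in the excerpt). For $1<q,r<\infty$ the higher order Riesz transforms $\partial^\alpha D^{-j}$ are Calder\'on--Zygmund operators, so these quantities are comparable; this reduction isolates the borderline indices $q,r\in\{1,\infty\}$ to be treated separately. I would also note that the hypothesis is exactly the scaling condition: testing the desired inequality against $f(\lambda\cdot)$ and matching powers of $\lambda$ forces the stated identity and yields $\theta=\frac{j+N/p-N/q}{m+N/p-N/r}$.

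Next, writing $f=\sum_k P_kf$ for the Littlewood--Paley projections onto $|\xi|\sim 2^k$, I would estimate each block two ways. Since $D^sP_k$ is a Fourier multiplier supported in a dyadic annulus, its kernel is an $L^1$-normalized rescaling of a Schwartz function, and Young's inequality gives the Bernstein bounds $\|D^jP_kf\|_{L^q}\lesssim 2^{jk}\|P_kf\|_{L^q}$ and $\|P_kf\|_{L^q}\lesssim 2^{Nk(1/p-1/q)}\|f\|_{L^p}$ in the increasing-integrability regime. Combining them controls the low frequencies by $\|f\|_{L^p}$ with weight $2^{ak}$, where $a=j+N/p-N/q$, while differentiating $m$ times first controls the high frequencies by $\|D^mf\|_{L^r}$ with weight $2^{bk}$, where $b=j-m+N/r-N/q$. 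A short computation using the scaling relation gives $a=\theta D$ and $b=-(1-\theta)D$ with $D=m+N/p-N/r$, so on the open range $\theta\in(j/m,1)$ (with $D>0$) one has $a>0>b$ and both geometric series $\sum_{k\le K}2^{ak}$ and $\sum_{k>K}2^{bk}$ converge, dominated by their $k=K$ term.

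Summing yields $\|D^jf\|_{L^q}\lesssim 2^{aK}\|f\|_{L^p}+2^{bK}\|D^mf\|_{L^r}$ for every $K\in\mathbb Z$, and balancing the two terms by choosing $2^{K}\sim(\|D^mf\|_{L^r}/\|f\|_{L^p})^{1/(a-b)}$ produces $\|D^mf\|_{L^r}^{\,a/(a-b)}\|f\|_{L^p}^{\,1-a/(a-b)}$; since $a/(a-b)=\theta$, this is the claim. I expect the endpoints, rather than this bulk argument, to be the main obstacle. At $\theta=1$ one has $b=0$, the crude triangle-inequality sum diverges, and the estimate becomes the genuine Sobolev embedding $\dot H^{m,r}\hookrightarrow\dot H^{j,q}$, which holds precisely away from the critical case $m-j-N/r=0$ (with $r>1$) where the target degenerates to BMO rather than $L^\infty$ --- exactly the exclusion in the hypothesis. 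The endpoint $\theta=j/m$, the indices $p,q,r\in\{1,\infty\}$ (where the Riesz reduction and some Bernstein steps fail), and configurations with $D\le 0$ or non-increasing integrability would each need a separate argument, most cleanly by falling back on the one-dimensional estimate and tensorizing. This endpoint bookkeeping is where the full strength of the stated hypotheses is actually used.
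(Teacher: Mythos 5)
Your argument is essentially correct on its main range and takes a genuinely different route from the paper, which does not prove the lemma at all but simply cites Cazenave (Theorem 1.3.7) and Nirenberg's original paper; those references run the classical real-variable proof (reduction to a one-dimensional interpolation inequality for $\|f'\|$ in terms of $\|f''\|$ and $\|f\|$, then tensorization and induction on $j,m$). Your Littlewood--Paley scheme is cleaner where it applies: the identities $a=\theta D$, $b=-(1-\theta)D$ and the optimization $a/(a-b)=\theta$ all check out, and the method makes the scaling relation and the appearance of $\theta$ completely transparent, at the price of requiring $1<q,r<\infty$ for the Riesz-transform reduction, $p\le q$ and $r\le q$ for the Bernstein steps, $D>0$, and $\theta$ strictly interior. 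The trade-off is that the lemma as stated genuinely includes the cases you defer -- $\theta=j/m$, $\theta=1$ (the pure Sobolev endpoint), exponents equal to $1$ or $\infty$, and non-increasing integrability -- so as written your proposal proves the open-range case and points at, rather than supplies, the rest; completing it honestly requires importing the very one-dimensional/tensorization machinery of the cited classical proof, at which point the Fourier-analytic detour is no longer self-contained. Two mitigating observations: you are explicit about exactly where the gaps are and why they arise (e.g.\ the BMO degeneration at $m-j-N/r=0$, $r>1$, matching the stated exclusion), and the only use of the lemma in this paper (in the proof of Proposition \ref{WSC}, with $j=0$, $m=1$, $r=2$, $p=\sigma_c$ and target exponents strictly between $\sigma_c$ and $2^*$) falls squarely inside the range your argument does cover.
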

\begin{proof}
	See \citet[Theorem 1.3.7]{cazenave} and \citet{Nirenberg} .
\end{proof}

Next, we recall some Strichartz type estimates associated to the linear Schr\"odinger propagator (see also \citet{HRasharp}, \citet{GUEVARA} and \cite{farah2019scattering}). Given $s> 0$, we say that a pair $(q,p)$ is $\dot{H}^s$-admissible if
\begin{equation}\label{CPA1}
\frac{2}{q}=\frac{N}{2}-\frac{N}{p}-s,
\end{equation}
where 
\begin{equation}\label{CPA2}
\left\{\begin{array}{cl}
\frac{2N}{N-2s}\leq & p  \leq \left(\frac{2N}{N-2}\right)^-,\;\hspace{0.4cm}\textnormal{if}\;\;\;  N\geq 3,\\
\frac{2}{1-s} \leq  & p \leq \left((\frac{2}{1-s})^+\right)',\;  \hspace{0.2cm}\textnormal{if}\;\; \;N=2,\\
\frac{2}{1-2s} \leq & p \leq  \infty,\; \; \hspace{1.2cm}\textnormal{if}\;\;\;N=1.
\end{array}\right.
\end{equation}
In the same way, we say that $(q,p)$ is $\dot{H}^{-s}$-admissible if 
$$
\frac{2}{q}=\frac{N}{2}-\frac{N}{p}+s,
$$
where
\begin{equation}\label{H-s}
\left\{\begin{array}{cl}
\left(\frac{2N}{N-2s}\right)^{+}\leq & p  \leq \left(\frac{2N}{N-2}\right)^-,\;\;\hspace{0.4cm}\textnormal{if}\;\;  N\geq 3,\\
\left(\frac{2}{1-s}\right)^{+} \leq  & p \leq \left((\frac{2}{1+s})^+\right)',\;  \hspace{0.2cm}\textnormal{if}\;\; \;N=2,\\
\left(\frac{2}{1-2s}\right)^{+} \leq & p \leq  \infty,\; \; \hspace{1.2cm}\textnormal{if}\;\;\;N=1.
\end{array}\right.
\end{equation}
Now for $s\in \mathbb{R}$, let $\mathcal{A}_s=\{(q,p);\; (q,p)\; \textnormal{is} \;\dot{H}^s-\textnormal{admissible}\}$\footnote{The restriction for $(q,p)$ $\dot{H}^0$-{admissible} is given by \eqref{L2adm}.}. We define the spaces $S(\dot{H}^{s})$ and $S'(\dot{H}^{-s})$ equipped with the following Strichartz norm
\begin{equation}\label{StrNorm}
\|u\|_{S(\dot{H}^{s})}=\sup_{(q,p)\in \mathcal{A}_{s}}\|u\|_{L^q_tL^p_x} 
\end{equation}
and the dual Strichartz norm
$$
\|u\|_{S'(\dot{H}^{-s})}=\inf_{(q,p)\in \mathcal{A}_{-s}}\|u\|_{L^{q'}_tL^{p'}_x},
$$
where $(q',p')$ is the H\"older dual to $(q,p)$. We denote $S(\dot{H}^{0})$ by $S(L^2)$. To indicate a restriction to a time interval $I\subset \mathbb{R}$, we will write $S(\dot{H}^{s};I)$ and $S'(\dot{H}^{-s};I)$.

One of the main tools we use in the proof of our local and global well-posedness theory are the well-known Strichartz estimates. 
\begin{lemma}\label{Lemma-Str} The following statements hold.
 \begin{itemize}
\item [(i)] (Linear estimates).
\begin{equation}\label{SE1}
\| e^{it\Delta}f \|_{S(L^2)} \leq c\|f\|_{L^2},
\end{equation}
\begin{equation}\label{SE2}
\|  e^{it\Delta}f \|_{S(\dot{H}^s)} \leq c\|f\|_{\dot{H}^s}.
\end{equation}
\item[(ii)] (Inhomogeneous estimates).
\begin{equation}\label{SE3}					 
\left \| \int_{\mathbb{R}} e^{i(t-t')\Delta}g(\cdot,t') dt' \right\|_{S(L^2)}\;+\; \left \| \int_{0}^t e^{i(t-t')\Delta}g(\cdot,t') dt' \right \|_{S(L^2) } \leq c\|g\|_{S'(L^2)},
\end{equation}
\begin{equation}\label{SE5}
\left \| \int_{0}^t e^{i(t-t')\Delta}g(\cdot,t') dt' \right \|_{S(\dot{H}^s) } \leq c\|g\|_{S'(\dot{H}^{-s})}.
\end{equation}
\end{itemize}
\end{lemma}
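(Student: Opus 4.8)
The plan is to reduce everything to two classical facts about the free propagator: the $L^2$-conservation $\|e^{it\Delta}f\|_{L^2}=\|f\|_{L^2}$, and the pointwise-in-time dispersive decay $\|e^{it\Delta}f\|_{L^p_x}\lesssim |t|^{-N(\frac12-\frac1p)}\|f\|_{L^{p'}_x}$ for $2\le p\le\infty$, the latter obtained by interpolating the explicit $L^1\to L^\infty$ kernel bound of $e^{it\Delta}$ with the $L^2$ isometry. From these I would first establish the $L^2$-based estimates \eqref{SE1} and \eqref{SE3} by the $TT^*$ method, and then upgrade to the $\dot H^s$-based estimates \eqref{SE2} and \eqref{SE5} by a spatial Sobolev embedding that converts $\dot H^s$-admissible pairs into $L^2$-admissible ones.

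For \eqref{SE1}, set $Tf(t)=e^{it\Delta}f$; its formal adjoint is $T^*g=\int_{\Real}e^{-it'\Delta}g(t')\,dt'$, so that $TT^*g(t)=\int_{\Real}e^{i(t-t')\Delta}g(t')\,dt'$. Combining the dispersive estimate with the Hardy--Littlewood--Sobolev inequality in the time variable, whose kernel decays like $|t-t'|^{-2/q}$ with $0<\frac2q<1$ for every pair in \eqref{CPA2} (the superscripts $(\cdot)^-$ there precisely exclude the endpoint $\frac2q=1$), yields $TT^*\colon L^{q'}_tL^{p'}_x\to L^q_tL^p_x$. By the $TT^*$ duality principle this is equivalent to $T\colon L^2\to L^q_tL^p_x$, which is \eqref{SE1} after taking the supremum over admissible pairs in \eqref{StrNorm}. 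The full-line integral in \eqref{SE3} is exactly the boundedness of $TT^*$ just obtained (applied, if needed, with distinct admissible exponents on input and output through the same dispersive/HLS pairing); the retarded integral $\int_0^t$ is then handled by the Christ--Kiselev lemma, which applies since $q'<q$ strictly for all non-endpoint pairs in \eqref{CPA2}.

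To pass to \eqref{SE2}, I would write $f=D^{-s}g$ with $\|g\|_{L^2}=\|f\|_{\dot H^s}$ and use that $D^{-s}$ commutes with $e^{it\Delta}$; by the Sobolev embedding of Lemma \ref{SI}, $\|D^{-s}h\|_{L^p_x}\lesssim\|h\|_{L^{\tilde p}_x}$ with $\frac{1}{\tilde p}=\frac1p+\frac{s}{N}$. Since \eqref{CPA1} gives $\frac2q=\frac N2-\frac Np-s=\frac N2-\frac N{\tilde p}$, the pair $(q,\tilde p)$ is $L^2$-admissible, so \eqref{SE1} yields $\|e^{it\Delta}f\|_{L^q_tL^p_x}\lesssim\|e^{it\Delta}g\|_{L^q_tL^{\tilde p}_x}\lesssim\|g\|_{L^2}=\|f\|_{\dot H^s}$. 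The dual estimate \eqref{SE5} follows by the same device on the output, combined with the dual Sobolev embedding on the input relating $\dot H^{-s}$-admissible pairs in \eqref{H-s} to $L^2$-admissible ones, and then invoking \eqref{SE3}.

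The main obstacle, requiring care rather than new ideas, is the bookkeeping of admissibility ranges: one must verify that every pair allowed by \eqref{CPA2} and \eqref{H-s} maps under the Sobolev shift $\frac1{\tilde p}=\frac1p+\frac sN$ into a genuinely $L^2$-admissible pair in \eqref{L2adm}, and that the HLS and Christ--Kiselev steps never touch a forbidden endpoint. This is exactly why the limiting exponents in \eqref{CPA2}--\eqref{H-s} are written with $(\cdot)^-$ and $(\cdot)^+$: because the ranges are engineered to avoid $\frac2q=1$, the delicate Keel--Tao endpoint estimate is never needed, and the elementary dispersive/HLS argument suffices throughout.
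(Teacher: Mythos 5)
Your reduction is sound for part of the lemma but contains two genuine gaps, one of them fatal. (For context: the paper does not prove this lemma at all; it cites \citet{LiPo15}, \citet{KATO2} and \citet{HRasharp}, and the gaps below are exactly the points where those references do real work.) First, the endpoint: the class $\mathcal{A}_0$ defined by \eqref{L2adm} includes, for $N\geq 3$, the Keel--Tao endpoint pair $\left(2,\tfrac{2N}{N-2}\right)$, since the $(\cdot)^{\pm}$ trimming appears only in \eqref{CPA2} and \eqref{H-s}, not in \eqref{L2adm}. At that pair $\tfrac{2}{q}=1$, so the Hardy--Littlewood--Sobolev step fails, and Christ--Kiselev (which needs $q'<q$ strictly) fails as well. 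Your closing claim that ``the delicate Keel--Tao endpoint estimate is never needed'' is therefore wrong for \eqref{SE1} and \eqref{SE3} as stated; and the endpoint is not decorative here, since the paper later uses precisely the norm $\|\cdot\|_{L^2_I L^{2N/(N+2)}_x}$, dual to $\left(2,\tfrac{2N}{N-2}\right)$, in Lemma \ref{lema2}. The fix is to invoke Keel--Tao, whose proof yields the retarded endpoint estimate directly.

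The fatal gap is in your derivation of \eqref{SE5}. The Sobolev device on the output is fine (and your proof of \eqref{SE2} is correct and standard), but on the input side your ``dual Sobolev embedding'' would have to take the form $\|D^s g\|_{L^{b'}_x}\lesssim \|g\|_{L^{\tilde p'}_x}$ for some $L^2$-admissible $(a,b)$, i.e.\ bound a positive-order derivative by a plain Lebesgue norm --- an inequality that is false and not what Lemma \ref{SI} provides. Equivalently, a $TT^*$-factorization of the untruncated integral would require the homogeneous estimate $\|e^{it\Delta}f\|_{L^{\tilde q}_t L^{\tilde p}_x}\lesssim \|f\|_{\dot H^{-s}}$ for the pairs in \eqref{H-s}, and homogeneous Strichartz estimates at negative regularity are false (test on data frequency-localized at $|\xi|\sim R$: the claimed gain $R^{-s}$ contradicts Bernstein, since the needed passage goes from a higher to a lower Lebesgue exponent). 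This is also why \eqref{SE3} contains both the full-line and the retarded integrals while \eqref{SE5} is stated only for the retarded one: for these ``exotic'' pairs, where neither side is $L^2$-admissible, only the retarded estimate holds. That estimate is a genuinely stronger theorem, due to Kato \citep{KATO2} and to Foschi and Vilela, proved by running the dispersive bound and fractional integration directly on the kernel $|t-t'|^{-N(\frac12-\frac1p)}$ with non-admissible exponents; the constraints \eqref{H-s}, including the $(\cdot)^{+}$ lower bounds, are exactly the acceptability conditions for that argument. As written, your proposal proves \eqref{SE2} and the non-endpoint cases of \eqref{SE1} and \eqref{SE3}, but not \eqref{SE5}.
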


For a complete proof we refer the reader to \citet{LiPo15} and \citet{KATO2} (see also \citet{HRasharp}, \cite{Boa} and the references therein).

%

\section{Well-posedness theory}\label{sec3}
In this section we prove the well-posedness results stated in Theorems \ref{GWP}-\ref{LWP}. The proofs follow from a contraction mapping argument based on the Strichartz estimates. In view of the singular factor $|x|^{-b}$ in the nonlinearity, we frequently divide our analysis in two regions. Indeed, let $B=B(0,1)=\{ x\in \mathbb{R}^N;|x|\leq 1\}$ a simple computation revels that 
\begin{equation}\label{RIxb}
\left\||x|^{-b}\right\|_{L^\gamma(B)}<\infty,\;\;\;\textnormal{if}\;\;\frac{N}{\gamma}-b>0 \quad \textnormal{and} \quad \left\||x|^{-b}\right\|_{L^\gamma(B^C)}<\infty,\;\;\;\textnormal{if}\;\;\frac{N}{\gamma}-b<0.
\end{equation}
We are going to use these facts several times throughout this section.
\subsection{Global well-posedness in $H^1(\Real^N)$}
In this subsection, we turn our attention to proof the Theorem \ref{GWP}. The heart of the proof is to establish good estimates on the nonlinearity $|x|^{-b}|u|^{2\sigma}u$. 
The next lemma provides these estimates. 
 \begin{lemma}\label{LG1} 
Let $N=2$, $0<b<1$ and $\frac{2-b}{2}<\sigma<\infty$. Then there exist $c>0$ and $\theta\in (0,2\sigma)$ sufficiently small such that 
\begin{itemize}
\item [(i)] $\left \|\chi_B|x|^{-b}|u|^{2\sigma} v \right\|_{S'(\dot{H}^{-s_c})}+\left \|\chi_{B^C}|x|^{-b}|u|^{2\sigma} v \right\|_{S'(\dot{H}^{-s_c})} \leq c\| u\|^{\theta}_{L^\infty_tH^1_x}\|u\|^{2\sigma-\theta}_{S(\dot{H}^{s_c})} \|v\|_{S(\dot{H}^{s_c})}$,
\item [(ii)] $\left\|\chi_B|x|^{-b}|u|^{2\sigma}v \right\|_{S'(L^2)}+\left\|\chi_{B^C}|x|^{-b}|u|^{2\sigma} v \right\|_{S'(L^2)}\leq c\| u\|^{\theta}_{L^\infty_tH^1_x}\|u\|^{2\sigma-\theta}_{S(\dot{H}^{s_c})} \| v\|_{S(L^2)}$,
\item [(iii)] $\left\|\nabla (|x|^{-b}|u|^{2\sigma} u)\right\|_{L_t^{q'}L^{r'}_x}\leq c\| u\|^{\theta}_{L^\infty_tH^1_x}\|u\|^{2\sigma-\theta}_{S(\dot{H}^{s_c})} \|\nabla u\|_{S(L^2)}+c\| u\|^{\theta+1}_{L^\infty_tH^1_x}\|u\|^{2\sigma-\theta}_{S(\dot{H}^{s_c})}$,
\end{itemize}
where\footnote{Note that the pair $(q,r)$ is $L^2$-admissible.} $(q,r)=\left(\frac{2}{1-\theta},\frac{2}{\theta}\right)$.
\end{lemma}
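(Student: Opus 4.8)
The overall strategy is to estimate the nonlinear term $|x|^{-b}|u|^{2\sigma}v$ by splitting spatial integration into the unit ball $B$ and its complement $B^C$, using the summability facts \eqref{RIxb} to absorb the singular weight $|x|^{-b}$ in a suitable $L^\gamma$-norm via H\"older's inequality. For each region I would first fix a convenient admissible pair (the statement already tells us which dual exponents will appear), then distribute the factors $|u|^{2\sigma}v$ across Lebesgue exponents so that the weight $|x|^{-b}\chi_B$ (resp. $|x|^{-b}\chi_{B^C}$) lands in an $L^\gamma$ with $\frac{N}{\gamma}-b>0$ (resp. $<0$), as required by \eqref{RIxb}. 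The role of the small parameter $\theta\in(0,2\sigma)$ is to peel off $\theta$ copies of $u$ that will be controlled in $L^\infty_tH^1_x$ by Sobolev embedding (Lemma \ref{SI}, and since $N=2$ we use $H^1\hookrightarrow L^r$ for every finite $r$, estimate \eqref{SEI1}), leaving $2\sigma-\theta$ copies plus the factor $v$ to be placed in the scaling-critical Strichartz space $S(\dot H^{s_c})$ (or $S(L^2)$ in part (ii)). This simultaneous use of a subcritical $H^1$ bound and a critical $\dot H^{s_c}$ bound is exactly the mechanism that lets one reach the endpoint range $0<b<1$ in two dimensions.

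For parts (i) and (ii), after the region split I would apply H\"older in $x$ to write, schematically, $\||x|^{-b}|u|^{2\sigma}v\|_{L^{r'}_x}\lesssim \||x|^{-b}\chi\|_{L^\gamma}\,\|u\|_{L^{\infty}}^{\theta}\,\|u\|_{L^{p}}^{2\sigma-\theta}\,\|v\|_{L^{p}}$ for appropriately chosen $\gamma,p,r'$, then integrate in time with H\"older, matching the time exponents to those of an $\dot H^{s_c}$-admissible pair on the right and a dual $\dot H^{-s_c}$-admissible (resp. $L^2$) pair on the left, finally taking the supremum/infimum defining the Strichartz norms \eqref{StrNorm}. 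The algebra amounts to checking that the chosen exponents satisfy the admissibility relations \eqref{CPA1}--\eqref{CPA2} for the parameters $N=2$, $s_c=1-\frac{2-b}{2\sigma}$, together with the two weight conditions from \eqref{RIxb}; I expect there is enough freedom (because $\theta$ can be taken as small as we like, pushing the critical exponents slightly) to satisfy all constraints simultaneously, and this is where the hypothesis $\sigma>\frac{2-b}{2}$ (i.e. $s_c>0$) and $b<1$ are consumed.

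Part (iii) is the genuinely delicate one and I expect it to be the main obstacle, because differentiating produces two types of terms: one where $\nabla$ falls on $|u|^{2\sigma}u$, giving a term structurally like $|x|^{-b}|u|^{2\sigma}\nabla u$ that is handled as in (ii) with $v=\nabla u$ (yielding the first term on the right), and one where $\nabla$ falls on the weight, producing $|x|^{-b-1}|u|^{2\sigma}u$. This second term raises the singularity to order $b+1$, which near the origin is only borderline integrable for $b<1$; controlling it forces a more careful choice of $\gamma$ in the region $B$, and it is the reason the endpoint $b<1$ (rather than $b<2$) is the natural barrier in $N=2$. To close it I would again split into $B$ and $B^C$, use \eqref{RIxb} now with the exponent $b+1$ on $B$ (requiring $\frac{N}{\gamma}>b+1$, which for $N=2$ still leaves room precisely when $b<1$), spend one extra factor of $\|u\|_{L^\infty_tH^1_x}$ to account for the additional power of $u$ (this explains the $\theta+1$ exponent in the final term), and bound the remaining $2\sigma-\theta$ copies in $S(\dot H^{s_c})$. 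After verifying the admissibility and integrability bookkeeping, summing the two regions and the two gradient contributions yields the stated estimate. Throughout, the key quantitative checks are the inequalities $\frac{N}{\gamma}-b>0$ on $B$, $\frac{N}{\gamma}-b<0$ on $B^C$, and their analogues with $b+1$ in part (iii); the smallness of $\theta$ is the parameter that makes all these compatible.
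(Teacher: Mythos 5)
Your plan follows essentially the same route as the paper's proof: the $B$/$B^C$ splitting with $|x|^{-b}$ placed in $L^\gamma$ via \eqref{RIxb}, peeling off $\theta$ copies of $u$ into $L^\infty_t H^1_x$ through the two-dimensional embedding $H^1\subset L^p$ for finite $p$, matching the remaining factors to $\dot H^{s_c}$-admissible (dually, $\dot H^{-s_c}$- or $L^2$-admissible) pairs, and in (iii) isolating the $|x|^{-b-1}$ term produced when the gradient hits the weight as the place where $b<1$ is consumed. The only content you leave implicit is the explicit exponent bookkeeping (the pairs $(\widehat q,\widehat r)$, $(\widehat a,\widehat r)$, $(\widetilde a,\widehat r)$, $(\bar a,\bar r)$, $(a^*,r^*)$ and the relation $\tfrac{2}{\gamma}-b=\tfrac{\theta(2-b)}{2\sigma}-\tfrac{2}{r_1}$), which the paper verifies directly and which indeed closes exactly as you anticipate.
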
 
\begin{proof} To prove $(i)$ and $(ii)$, we first define the following numbers
\begin{equation}\label{LGWP1}
\widehat{q}=\frac{4\sigma(2\sigma+2-\theta)}{2\sigma(2\sigma+b)-\theta(2\sigma-2+b)},\;\qquad\; \widehat{r}=\frac{4\sigma(2\sigma+2-\theta)}{(2\sigma-\theta)(2-b)}
\end{equation}
and
\begin{equation}\label{LGWP2}
\widehat{a}=\frac{2\sigma(2\sigma+2-\theta)}{2-b},\;\qquad\;\widetilde{a}=\frac{2\sigma(2\sigma+2-\theta)}{2\sigma(2\sigma+b-\theta)-(2-b)(1-\theta)}.
\end{equation}
It is easy to see that, for $\theta$ sufficiently small, $(\widehat{q},\widehat{r})$ is $L^2$-admissible, $(\widehat{a},\widehat{r})$ is $\dot{H}^{s_c}$-admissible and $(\widetilde{a},\widehat{r})$ is $\dot{H}^{-s_c}$ admissible. Moreover
\begin{equation}\label{GWP3}
\frac{1}{\widetilde{a}'}=\frac{2\sigma-\theta}{\widehat{a}}+\frac{1}{\widehat{a}}\;\quad \textnormal{and}\;\quad \frac{1}{\widehat{q}'}=\frac{2\sigma-\theta}{\widehat{a}}+\frac{1}{\widehat{q}}.    
\end{equation}

Let us prove $(i)$. Let $A\subset \mathbb{R}^N$ denotes either $B$ or $B^C$. By definition of $S'(\dot{H}^{-s})$, we clearly have
 $
 \left \|\chi_A|x|^{-b}|u|^{2\sigma} v \right\|_{S'(\dot{H}^{-s_c})}\leq \left \|\chi_A|x|^{-b}|u|^{2\sigma} v \right\|_{L^{\widetilde{a}'}_tL^{\widehat{r}'}_x}.
 $
 On the other hand, from  H\"older's inequality we deduce
 \begin{equation}\label{LG1Hs1}
\begin{split}
\left \| \chi_A |x|^{-b}|u|^{2\sigma} v \right\|_{L^{\widehat{r}'}_x} &\leq \left\||x|^{-b}\right\|_{L^\gamma(A)}  \|u\|^{\theta}_{L^{\theta r_1}_x}   \|u\|^{2\sigma-\theta}_{L_x^{(2\sigma-\theta)r_2}}  \|v\|_{L^{\widehat{r}}_x}  \\
&=\left\||x|^{-b}\right\|_{L^\gamma(A)}  \|u\|^{\theta}_{L^{\theta r_1}_x}   \|u\|^{2\sigma-\theta}_{L_x^{\widehat{r}}} \|v\|_{L^{\widehat{r}}_x},
\end{split}
\end{equation}
where
\begin{equation}\label{LG1Hs2}
\frac{1}{\widehat{r}'}=\frac{1}{\gamma}+\frac{1}{r_1}+\frac{1}{r_2}+\frac{1}{\widehat{r}}\;\;\textnormal{and}\;\;\widehat{r}=(2\sigma-\theta)r_2.
\end{equation}
Observe that \eqref{LG1Hs2} implies
\begin{equation*}
	\frac{2}{\gamma}=2-\frac{2(2\sigma+2-\theta)}{\widehat{r}}-\frac{2}{r_1},
\end{equation*}
and using the value of $\widehat{r}$, it follows that
\begin{equation}\label{LG1Hs3}
\frac{2}{\gamma}-b=\frac{\theta(2-b)}{2\sigma}-\frac{2}{r_1}.
\end{equation}
Next we show that $\left\||x|^{-b}\right\|_{L^\gamma(A)}$ is finite and  $H^1\subset L^{\theta r_1}$. Since $2\sigma>2-b$ we have $\frac{4\sigma}{2-b}>2$. Thus, if $A=B$ and choosing $\theta r_1\in \left(\frac{4\sigma}{2-b},\infty\right)$, from \eqref{LG1Hs3}, we immediately get $\frac{2}{\gamma}-b>0$. Furthermore, if $A=B^C$ and choosing $\theta r_1\in\left(2,\frac{4\sigma}{2-b}\right)$, we obtain $\frac{2}{\gamma}-b<0$. In both cases we have $\left\||x|^{-b}\right\|_{L^\gamma(A)}<\infty$ and, from Lemma \ref{SI}, $H^1\subset L^{\theta r_1}$ (recall that, for $N=2$, one has $H^1\subset L^p$, $p\in [2,\infty)$). Therefore, the inequality \eqref{LG1Hs1} yields
\begin{equation}\label{LG1Hs41}
\left \| \chi_A |x|^{-b}|u|^{2\sigma} v \right\|_{L^{\widehat{r}'}_x} \lesssim \|u\|^{\theta}_{H^1_x}  \|u\|^{2\sigma-\theta}_{L_x^{\widehat{r}}} \|v\|_{L^{\widehat{r}}_x}.
\end{equation}
Now applying H\"older's inequality in time and recalling \eqref{GWP3}, we have
\begin{equation}
\begin{split}\label{LG1Hs42}
\left \|\chi_A  |x|^{-b}|u|^{2\sigma} v \right\|_{L_t^{\widetilde{a}'}L^{\widehat{r}'}_x}
&\lesssim \|u\|^{\theta}_{L^\infty_tH^1_x} \|u\|^{2\sigma-\theta}_{ L_t^{\widehat{a}} L_x^{\widehat{r}}} \|v\|_{L^{\widehat{a}}_tL^{\widehat{r}}_x}   \\
&\lesssim \| u\|^{\theta}_{L^\infty_tH^1_x}\|u\|^{2\sigma-\theta}_{S(\dot{H}^{s_c})} \|v\|_{S(\dot{H}^{s_c})},
\end{split}
\end{equation}
which implies $(i)$.

Since $(\widehat{q},\widehat{r})$ is $L^2$-admissible, the proof of $(ii)$ is essentially the same as $(i)$. It is worth noting that, once \eqref{LG1Hs41} is achieved , we use \eqref{GWP3} to deduce
 \begin{equation}\label{LGHsii}
\left\| \chi_A |x|^{-b}|u|^{2\sigma} v\right \|_{L_t^{\widehat{q}'}L^{\widehat{r}'}_x}\lesssim \|u\|^{\theta}_{L^\infty_tH^1_x}\|u\|^{2\sigma-\theta}_{L_t^{\widehat{a}} L_x^{\widehat{r}}} \|v\|_{L^{\widehat{q}}_tL^{\widehat{r}}_x},
\end{equation}
which yields $(ii$).
 
Before starting the proof of $(iii)$, we need the following numbers
\begin{equation}\label{LGWP3}
\bar{a}=\frac{2(2\sigma+1-\theta)}{1-s_c+\theta},\;\qquad\;\bar{r}=\frac{4\sigma(2\sigma+1-\theta)}{2\sigma(1-b+s_c)+2-b-\theta(2-b+2\sigma)}
\end{equation}
and
\begin{equation}\label{LGWP4}
\bar{q}=\frac{2(2\sigma+1-\theta)}{1+2\sigma s_c+\theta(1-s_c)},\quad a^*=\frac{2(2\sigma-\theta)}{1+\theta},\quad\;r^*=\frac{4\sigma(2\sigma-\theta)}{2\sigma(1-b)-\theta(2-b+2\sigma)}.
\end{equation}
Note that, for $\theta$ sufficiently small, $(\bar{q},\bar{r})$ is $L^2$-admissible, $(\bar{a},\bar{r})$, $(a^*,r^*)$ is $\dot{H}^{s_c}$-admissible and\footnote{Since $b<1$, we have that the denominator of $r^*$ is positive and $\bar{r},r^*>\frac{2}{1-s_c}$ (this is a necessary  condition for  $\dot{H}^{s_c}$-admissible pairs, see \eqref{CPA2}).},
\begin{equation}\label{GWP4}
\frac{1}{q'}=\frac{2\sigma-\theta}{\bar{a}}+\frac{1}{\bar{q}}\;\;\;\;\textnormal{and}\;\;\;\;(2\sigma-\theta)q'=a^*.    
\end{equation}
Again, let $A\subset \mathbb{R}^N$ denotes either $B$ or $B^C$. From \eqref{GWP4}) and H\"older's inequality we deduce
\[
\begin{split}
\left\| \nabla \left( |x|^{-b}|u|^{2\sigma} u\right) \right\|_{L^{q'}_tL^{r'}_x(A)} \leq& \left \|\left\||x|^{-b}\right\|_{L^\gamma(A)}\|u\|_{L_{x}^{r_1\theta}}^{\theta}\|u\|^{2\sigma-\theta}_{L_x^{\bar{r}}} \|\nabla u \|_{L^{\bar{r}}_x}\right\|_{L^{q'}_t}\\
&+ \left\|\left\||x|^{-b-1}\right\|_{L^d(A)}\|u\|_{L_{x}^{(\theta+1)p_1}}^{\theta+1} \|u\|^{2\sigma-\theta}_{L_{x}^{r^*}}\right\|_{L^{q'}_t}\\
 \lesssim & \|u\|_{L_{x}^{r_1\theta}}^{\theta}\|u\|^{2\sigma-\theta}_{L_t^{\bar{a}}L_x^{\bar{r}}} \|\nabla u \|_{L^{\bar{q}}_tL^{\bar{r}}_x}+c\|u\|_{L_{x}^{(\theta+1)p_1}}^{\theta+1} \|u\|^{2\sigma-\theta}_{L_{t}^{a^*}L_{x}^{r^*}},
\end{split}
\]
where
$$
\frac{1}{r'}=\frac{1}{\gamma}+\frac{1}{r_1}+\frac{2\sigma-\theta}{\bar{r}}+\frac{1}{\bar{r}}=\frac{1}{d}+\frac{1}{p_1}+\frac{2\sigma-\theta}{r^*}.
$$
Using the definition of the numbers $\bar{r}$ and $r^*$ one has
$$
\frac{2}{\gamma}-b=\frac{\theta(2-b)}{2\sigma}-\frac{2}{r_1}, \qquad \frac{2}{d}-b-1=\frac{\theta(2-b)}{2\sigma}-\frac{2}{p_1},
$$
which are analogous to the relation \eqref{LG1Hs3}. Finally, choosing $r_1$ and $p_1$ as in $(i)$ we have that $\left\||x|^{-b}\right\|_{L^\gamma(A)}$ and $\left\||x|^{-b-1}\right\|_{L^d(A)}$ are finite. Also, $H^1\subset L^{\theta r_1} \cup L^{(\theta+1)p_1}$ and we complete the proof of $(iii)$.
\end{proof}

It should be emphasized that the third author in \cite{Boa} proved the previous lemma under the assumption $0<b<\frac{2}{3}$. Here we extend it to $0 < b <1$.

Now, we have all the tools to prove Theorem \ref{GWP}.

\begin{proof}[Proof of Theorem \ref{GWP}]
First note that $|x|^{-b}|u|^{2\sigma}u=\chi_{B} |x|^{-b}|u|^{2\sigma}u+\chi_{B^c} |x|^{-b}|u|^{2\sigma}u$. So, applying Lemma \ref{Lemma-Str}, we have for $I=[-T,T]$
\begin{align*}
\left\| \int_{0}^{t}e^{i(t-t')\Delta} |x|^{-b}|u|^{2\sigma}u(t')\,dt' \right\|_{S\left(L^2;I\right)}
&\lesssim \left\|\chi_{B} |x|^{-b}|u|^{2\sigma}u\right\|_{S'\left(L^2;I \right)}+\left\|\chi_{B^c} |x|^{-b}|u|^{2\sigma}u\right\|_{S'\left(L^2;I\right)}.
\end{align*}
Similarly, we can use the same argument to estimate the norm $\|\cdot\|_{S(\dot{H}^{s_c};I)}$. The rest of the proof follows the same lines as in \citet[Corollary 1.12]{Boa}.
\end{proof}
\subsection{Local well-posedness in $\dot{H}^{s_c}(\Real^N)\cap \dot{H}^1(\Real^N)$}
Recall that $s_c=\frac{N}{2}-\frac{2-b}{2\sigma}$. In this subsection, we show that the IVP \eqref{PVI} is locally well-posed in $\dot{H}^{s_c}(\Real^N)\cap \dot{H}^1(\Real^N)$, for $N\geq 3$ and $\frac{2-b}{N}<\sigma<\frac{2-b}{N-2}$ (equivalently $0<s_c<1$). We start with some estimates for the gradient of the nonlinearity.


\begin{lemma}\label{lema1} Let $N\geq 3 $, $0<b<\min\{\frac{N}{2},2\}$ and $\frac{2-b}{N}<\sigma<\frac{2-b}{N-2}$, then there exist $c,\theta_1,\theta_2 >0$ such that the following inequality holds  
	\begin{align}\label{H1Sl2}
	\left \|\chi_{B^c} \nabla |x|^{-b}|u|^{2\sigma} u\right \|_{S'(L^2;I)}+\left \|\chi_{B} \nabla  |x|^{-b}|u|^{2\sigma} u\right \|_{S'(L^2;I)}\leq c (T^{\theta_1}+T^{\theta_2})\|\nabla u\|^{2\sigma+1}_{S(L^2;I)},
	\end{align}
	where $I=[-T,T]$. 
\end{lemma}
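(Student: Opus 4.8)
The plan is to reduce to two model terms via the product rule, treat the singular weight by the region splitting, and extract the powers of $T$ from H\"older's inequality on the finite interval $I$. First I would write
\[
\nabla\bigl(|x|^{-b}|u|^{2\sigma}u\bigr)=|x|^{-b}\nabla\bigl(|u|^{2\sigma}u\bigr)-b\,|x|^{-b-2}x\,|u|^{2\sigma}u,
\]
and use $\bigl|\nabla(|u|^{2\sigma}u)\bigr|\lesssim|u|^{2\sigma}|\nabla u|$, so that in each region $A\in\{B,B^{c}\}$ it suffices to bound $\bigl\|\chi_A|x|^{-b}|u|^{2\sigma}\nabla u\bigr\|_{S'(L^2;I)}$ and $\bigl\|\chi_A|x|^{-b-1}|u|^{2\sigma}u\bigr\|_{S'(L^2;I)}$. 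Since $\|\cdot\|_{S'(L^2;I)}$ is an infimum over the dual norms of $L^2$-admissible pairs, I would bound each term by a single conveniently chosen $\|\cdot\|_{L^{q'}_IL^{r'}_x}$.

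Next, for each model term I would apply H\"older in $x$ to peel off the weight, e.g.
\[
\bigl\|\chi_A|x|^{-b}|u|^{2\sigma}\nabla u\bigr\|_{L^{r'}_x}\le\bigl\||x|^{-b}\bigr\|_{L^{\gamma}(A)}\,\|u\|_{L^{m}_x}^{2\sigma}\,\|\nabla u\|_{L^{p}_x},
\]
with $\tfrac1{r'}=\tfrac1\gamma+\tfrac{2\sigma}{m}+\tfrac1p$, and the analogous splitting for the $|x|^{-b-1}$ term (where all $2\sigma+1$ factors are $u$). By \eqref{RIxb} the weight norm is finite provided $\gamma$ satisfies $\tfrac N\gamma>b$ on $B$ and $\tfrac N\gamma<b$ on $B^{c}$ (with $b$ replaced by $b+1$ for the second term), which forces different choices in the two regions. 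For each $u$-factor I would then invoke the Sobolev embedding of Lemma \ref{SI}(i), $\|u\|_{L^{m}_x}\lesssim\|\nabla u\|_{L^{p_j}_x}$ with $\tfrac1m=\tfrac1{p_j}-\tfrac1N$, choosing the $p_j$ (and $p$) so that each pair $(q_j,p_j)$ is $L^2$-admissible.

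Finally, after the spatial step every factor has the form $\|\nabla u\|_{L^{q_j}_IL^{p_j}_x}\le\|\nabla u\|_{S(L^2;I)}$, and combining the $2\sigma+1$ of them produces $\|\nabla u\|_{S(L^2;I)}^{2\sigma+1}$. Applying H\"older in time over $I=[-T,T]$, the time exponents dictated by the spatial relation do not saturate the admissibility identity $\tfrac2{q_j}=\tfrac N2-\tfrac N{p_j}$, so there is slack $\theta=\tfrac1{q'}-\sum_j\tfrac1{q_j}>0$ and one gains $|I|^{\theta}\simeq T^{\theta}$. Carrying this out with an exponent $\theta_1$ for one region/term and $\theta_2$ for the other yields the factor $c(T^{\theta_1}+T^{\theta_2})$.

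The main difficulty is the simultaneous bookkeeping of $\gamma,m,p,p_j$ so that: the weight is integrable in each region (with opposite inequalities on $B$ and $B^{c}$); all exponents $p_j$ stay in the admissible range \eqref{L2adm} and in the range of the Sobolev embedding; and the leftover time powers $\theta_1,\theta_2$ are strictly positive. The tightest constraint comes from the more singular term $|x|^{-b-1}$, whose integrability near the origin requires $\tfrac N\gamma>b+1$; reconciling this with the admissible window is where the hypothesis $b<\tfrac N2$ (together with $b<2$) is needed, whereas the positivity of $\theta_1,\theta_2$ reflects the $\dot H^1$-subcriticality $s_c<1$, i.e. $\sigma<\tfrac{2-b}{N-2}$, while $\sigma>\tfrac{2-b}N$ keeps the Sobolev exponents nondegenerate.
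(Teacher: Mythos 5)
Your strategy is the same as the paper's: product rule, the two model terms $|x|^{-b}|u|^{2\sigma}\nabla u$ and $|x|^{-b-1}|u|^{2\sigma+1}$, H\"older in $x$ to peel off the weight via \eqref{RIxb}, Sobolev to convert each $u$-factor into $\|\nabla u\|_{L^{p_j}}$, and H\"older in time on $I$ to extract $T^{\theta_i}$. On the exterior region this closes exactly as you describe (the paper takes the single pair $q_0=\frac{2(2\sigma+2)}{\sigma(N-2)}$, $p_0=\frac{N(2\sigma+2)}{N+2\sigma}$, for which $\frac{N}{\gamma}-b=\frac{N}{d}-b-1=-b<0$ and the time slack is $\frac{4-2\sigma(N-2)}{4}>0$).

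The genuine gap is in the interior region, and it is precisely the bookkeeping you defer. On $B$ you need \emph{simultaneously} $\frac{N}{\gamma}>b$, $\frac{N}{d}>b+1$, and $p<N$ (the latter is forced by your use of Lemma \ref{SI}(i) with $\frac1m=\frac1p-\frac1N$ on the $|x|^{-b-1}$ term). Running the scheme with a single $L^2$-admissible pair, the optimal choice is $p=\frac{2N(N-b)}{N(N-2)+4-bN}$, and then $p<N$ holds if and only if $b<N-2$. For $N\geq 4$ this is harmless since $b<2\le N-2$, but for $N=3$ it only covers $0<b<1$, whereas the lemma claims $0<b<\frac32$. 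The paper closes the range $1\le b<\frac32$ by a separate argument: it decouples the dual pair $(q,p)$ from the pair $(\bar q,\bar p)$ used for the $u$-factors, pushes $\bar p=\frac{3}{1+\varepsilon}$ to just below the Sobolev threshold $N=3$, and chooses $p=\frac{3}{2-b-2\varepsilon(2\sigma+1)}$ so that the weight conditions and the positivity of the time exponent $\frac{4-2b-2\sigma-2\varepsilon(2\sigma+1)}{4}$ survive for $\varepsilon$ small. Your framework (different $p_j$ per factor) is flexible enough to accommodate this, but as written your plan does not detect that a case split is needed, and the ``obvious'' uniform choice fails exactly where the hypothesis $b<\min\{\frac N2,2\}$ is doing its real work. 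A correct write-up must exhibit these exponents explicitly for $N=3$, $1\le b<\frac32$.
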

\begin{proof} 
	We start estimating the first term in the left hand side of \eqref{H1Sl2}. Define $(q_0,p_0)$ the  $L^2$-admissible pair given by\footnote{It is not difficult to check that the pair $(q_0,p_0)$ is $L^2$-admissible.}
	\begin{equation}\label{PA1} 
	q_0= \frac{2(2\sigma+2)}{\sigma(N-2)} \;\;\; \textnormal{and} \;\; p_0=\frac{ N(2\sigma+2)}{ N+ 2\sigma },
	\end{equation}
	From the H\"older inequality and Sobolev inequality, it follows that 
	\begin{align}\label{L1C10}
	\left\|\nabla(|x|^{-b}|u|^{2\sigma} u)\right\|_{L_x^{p'_0}(B^c)} \lesssim & \left\||x|^{-b}\right\|_{L^\gamma(B^c)} \left\|\nabla (|u|^{2\sigma} u) \right\|_{L^{\beta}_x} + \left\|\nabla (|x|^{-b})\right\|_{L^d(B^c)}\|u\|^{2\sigma +1}_{L^{(2\sigma+1)e}_x} \nonumber  \\
	\lesssim &  \left\| |x|^{-b} \right\|_{L^\gamma(B^c)}  \|u\|^{2\sigma}_{L_x^{2\sigma \alpha}} \| \nabla u \|_{L_x^{p_0}} + \left\||x|^{-b-1}\right\|_{L^d(B^c)}\|\nabla  u\|^{2\sigma +1}_{L^{p_0}_x} \nonumber  \\
	\lesssim &  \left\| |x|^{-b} \right\|_{L^\gamma(B^c)} \| \nabla  u \|^{2\sigma+1}_{L_x^{p_0}} + \left\||x|^{-b-1}\right\|_{L^d(B^c)}\|\nabla  u\|^{2\sigma +1}_{L^{p_0}_x},
	\end{align}
	where the following relations are satisfied
	\begin{equation*}
	\left\{\begin{array}{cl}\vspace{0.1cm}
	\frac{1}{p'_0}=&\frac{1}{\gamma}+\frac{1}{\beta}=\frac{1}{d}+\frac{1}{e}\\ \vspace{0.1cm}
	\frac{1}{\beta}=&\frac{1}{\alpha}+\frac{1}{p_0} \\ \vspace{0.1cm}
	1=&\frac{N}{p_0}-\frac{N}{2\sigma \alpha}; \quad p_0<N\\ \vspace{0.1cm}
	1=&\frac{N}{p_0}-\frac{N}{(2\sigma+1)e},
	\end{array}\right.
	\end{equation*}
	which are equivalent to
	\begin{equation}\label{L1C12} 
	\left\{\begin{array}{cl}\vspace{0.1cm}
	\frac{N}{\gamma}=&N-\frac{2N}{p_0}-\frac{2\sigma N}{p_0}+2\sigma \\ \vspace{0.1cm}
	\frac{N}{d}=&N-\frac{2N}{p_0}-\frac{2\sigma N}{p_0} +2\sigma+1.
	\end{array}\right.
	\end{equation}
	Note that, in view of \eqref{PA1} we have $\frac{N}{\gamma}-b=-b<0$ and $\frac{N}{d}-b-1=-b<0$. So, $\left\| |x|^{-b} \right\|_{L^\gamma(B^c)}$ and $\left\| |x|^{-b-1} \right\|_{L^d(B^c)}$ are bounded quantities (see \eqref{RIxb}) and therefore
	\begin{equation*}
	\left\|\nabla(|x|^{-b}|u|^{2\sigma} u)\right\|_{L_x^{p'_0}(B^c)}\lesssim\| \nabla u \|^{2\sigma+1}_{L_x^{p_0}}.
	\end{equation*}
	On the other hand, applying the H\"older inequality in the time variable we deduce
	\begin{equation}\label{C1}
	\left\| \chi_{B^c}\nabla |x|^{-b}|u|^{2\sigma} u   \right\|_{S'\left(L^2;I\right)}\leq \left\|\left\|\nabla(|x|^{-b}|u|^{2\sigma} u)\right\|_{L_x^{p'_0}(B^c)}\right\|_{L_I^{q'_0}}\lesssim T^{\frac{1}{q_1}} \| \nabla u \|^{2\sigma+1}_{L_I^{q_0}L_x^{p_0}},
	\end{equation}
	where $\frac{1}{q'_0}=\frac{1}{q_1}+\frac{2\sigma+1}{q_0}$. From \eqref{PA1}, it is clear that $\frac{1}{q_1}=\frac{4-2\sigma(N-2)}{4}>0$, where the positivity follows from $\sigma<\frac{2-b}{N-2}$. Setting $\theta_1=\frac{1}{q_1}$, we conclude the estimate of the first term in the left hand side of \eqref{H1Sl2}.
	
	Next, we turn out attention to the second term in the left hand side of \eqref{H1Sl2}. First note that $\left\| \chi_B\nabla |x|^{-b}|u|^{2\sigma} u\right\|_{S'\left(L^2;I\right)}\leq \left\|\left\|\nabla(|x|^{-b}|u|^{2\sigma} u)\right\|_{L_x^{p'}(B)}\right\|_{L^{q'}_I}$. From the same arguments as in the inequality \eqref{L1C10}, we deduce
	\begin{equation}\label{L1C221}
	\left\|\nabla(|x|^{-b}|u|^{2\sigma} u)\right\|_{L_x^{p'}(B)}\leq\left \| |x|^{-b} \right\|_{L^\gamma(B)} \| \nabla u \|^{2\sigma+1}_{L_x^{p}} + \left\||x|^{-b-1}\right\|_{L^d(B)}\|\nabla u\|^{2\sigma +1}_{L^{p}_x},
	\end{equation}
	assuming \eqref{L1C12} is satisfied replacing $p_0$ by $p$ (to be determined later), that is
	\begin{equation}\label{L1} 
	\left\{\begin{array}{cl}\vspace{0.1cm}
	\frac{N}{\gamma}=&N-\frac{2N}{p}-\frac{2\sigma N}{p}+2\sigma \\ \vspace{0.1cm}
	\frac{N}{d}=&N-\frac{2N}{p}-\frac{2\sigma N}{p}+2\sigma +1.
	\end{array}\right.
	\end{equation}
	In order to have that $\left\||x|^{-b}\right\|_{L^\gamma(B)}$ and $\left\||x|^{-b-1}\right\|_{L^d(B)}$ are bounded quantities, we need $\frac{N}{\gamma}>b$ and $\frac{N}{d}>b+1$, respectively, by \eqref{RIxb}. So, we want to show that $N-\frac{2N}{p}-\frac{2\sigma N}{p}+2\sigma>b$. This is equivalent to $2\sigma<\frac{(N-b)p-2N}{N-p}$ (assuming $p<N$), then we choose $p$ such that 
	$$
	\frac{(N-b)p-2N}{N-p}=\frac{4-2b}{N-2}.
	$$
	In other words, we choose $p$ and $q$ given by\footnote{It is easy to see that $p>2$ if, and only if, $N>2$ and $p<\frac{2N}{N-2}$ if, and only if, $b<2$. Therefore the pair $(q,p)$ is $L^2$-admissible.} 
	\begin{equation}\label{PA2}
	p=\frac{2N(N-b)}{N(N-2)+4-bN}\;\;\textnormal{and}\;\;q=\frac{2(N-b)}{N-2},
	\end{equation}
	where we have used that the pair $(q,p)$ is $L^2$-admissible to compute the value of $q$. Note that $p<N$ if, and only if, $b<N-2$. Hence, the H\"older inequality in the time variable leads to
	\begin{align}
	\left\| \chi_B\nabla |x|^{-b}|u|^{2\sigma} u\right\|_{S'\left(L^2;I\right)}&\lesssim T^{\frac{1}{q_1}} \|\nabla u\|^{2\sigma+1}_{L_I^{q}L^p_x},  
	\end{align}  
	where $\frac{1}{q'}=\frac{1}{q_1}+\frac{2\sigma+1}{q}$. Since $\sigma<\frac{2-b}{N-2}$, it is clear that $\frac{1}{q_1}=1-\frac{2\sigma+2}{q}=\frac{4-2b-2\sigma(N-2)}{2(N-b)}>0$. Therefore,
	\begin{align}\label{c2}
	\left\| \chi_B\nabla |x|^{-b}|u|^{2\sigma} u\right\|_{S'\left(L^2;I\right)}\lesssim T^{\theta_2} \|\nabla u\|^{2\sigma+1}_{S(L^2;I)},   
	\end{align}
	for $\theta_2=\frac{1}{q_1}>0$.
	
	Note that the restriction $0<b<N-2$ implies that the inequality \eqref{c2} only holds for $0<b<1$ when $N=3$. Next, we show that in dimension $N=3$ it is also possible to  consider $1\leq b <\frac{3}{2}$. To this end, for a pair $(q,p)$ $L^2$-admissible to be chosen later, we have
	\begin{align}
	\left\| \chi_B\nabla |x|^{-b}|u|^{2\sigma} u\right\|_{S'\left(L^2;I\right)}&\leq \left\|\chi_{B}\nabla |x|^{-b}|u|^{2\sigma}u\right\|_{L^{q'}_IL^{p'}_x}\\
	&\leq \left\|\left\||x|^{-b}\right\|_{L^{\gamma}(B)}\|u\|_{L^{2\sigma r_1}_x}^{2\sigma}\left\|\nabla u\right\|_{L ^{\bar p}_x}\right\|_{L^{q'}_I}+\left\|\left\||x|^{-b-1}\right\|_{L^{d}(B)}\|u\|_{L^{(2\sigma+1)e}_x}^{2\sigma+1}\right\|_{L^{q'}_I}
	\nonumber\\&\lesssim T^{\frac{1}{q_1}}\left\||x|^{-b}\right\|_{L^{\gamma}(B)}\|\nabla u\|_{L^{\bar q}_IL^{\bar p}_x}^{2\sigma+1}+T^{\frac{1}{q_1}}\left\||x|^{-b-1}\right\|_{L^{d}(B)}\|\nabla u\|_{L^{\bar q}_IL^{\bar p}_x}^{2\sigma+1},
	\end{align}
if the following conditions are satisfied
	\begin{align}\label{condH1sl2}
	\begin{cases}
	\frac{1}{p'}=\frac{1}{\gamma}+\frac{1}{r_1}+\frac{1}{\bar p}=\frac{1}{d}+\frac{1}{e}\\
	1=\frac{3}{\bar p}-\frac{3}{2\sigma r_1}=\frac{3}{\bar p}-\frac{3}{(2\sigma+1)e},\,\,\,\,\bar p<3\\
	\frac{1}{q'}=\frac{1}{q_1}+\frac{2\sigma+1}{\bar q}.
	\end{cases}
	\end{align}
	Consider the $L^2$-admissible pair $(\bar q,\bar p )$ given by
	\begin{align}
	\bar q=\frac{1-2\varepsilon}{2}\,\,\, \mbox{and}\,\,\,\,\bar p=\frac{3}{1+\varepsilon},
	\end{align}
	for $\varepsilon>0$ small enough. In this case, we have $\bar p<3$ and system \eqref{condH1sl2} can be rewritten  as 
	\begin{align}
	\begin{cases}
	\frac{3}{\gamma}-b=2-b+2\sigma-\frac{3}{p}-\varepsilon(2\sigma+1)\\
	\frac{3}{d}-b-1=2-b+2\sigma-\frac{3}{p}-\varepsilon(2\sigma+1).
	\end{cases}
	\end{align}
	Now, we need to choose $p$ such that $2-b+2\sigma-\frac{3}{p}-\varepsilon(2\sigma+1)>0$, and thus, by \eqref{RIxb}, we deduce that $\left\||x|^{-b}\right\|_{L^\gamma (B)}$ and $\left\||x|^{-b-1}\right\|_{L^d (B)}$ are bounded quantities. To this end, define the $L^2$-admissible $(q,p)$ given by\footnote{Since $1\leq b<\frac{3}{2}$ it is clear that $2<p<6$ and $q>0$ for $\varepsilon>0$ small enough. }
	\begin{align}
	q=\frac{4}{2b-1+4\varepsilon (2\sigma+1)}\,\,\,\,\mbox{ and }\,\,\,\,p=\frac{3}{2-b-2\varepsilon(2\sigma+1)}.
	\end{align}
	It also follows that, $\frac{1}{q_1}=1-\frac{1}{q}-\frac{2\sigma+1}{\bar q}=1-\frac{2b-1+4\varepsilon(2\sigma+1)}{4}-\frac{(2\sigma+1)(1-2\varepsilon)}{2}=\frac{4-2b-2\sigma-2\varepsilon(2\sigma+1)}{4}>0$ for $\varepsilon>0$ sufficient small. Therefore,
	\begin{align}
	\left\| \chi_B\nabla |x|^{-b}|u|^{2\sigma} u\right\|_{S'\left(L^2;I\right)}\lesssim T^{\theta_2}\|\nabla u\|_{S(L^2)},
	\end{align}
	for $\theta_2=\frac{1}{q_1}>0$. 
	
	Finally, collecting the last inequality, \eqref{C1} and \eqref{c2} we conclude the proof.
\end{proof}

\begin{lemma}\label{lema2} Let $N\geq 3 $, $0<b<\min\{\frac{N}{2},2\}$ and $\frac{2-b}{N}<\sigma<\frac{2-b}{N-2}$, then there exist $c,\theta_1,\theta_2 >0$ such that the following inequalities hold
	\begin{align}
	\left \|D^{s_c} |x|^{-b}|u|^{2\sigma}u\right \|_{S'(L^2;I)}\leq \left\|D^{s_c} |x|^{-b}|u|^{2\sigma}u\right\|_{L^2_IL^{\frac{2N}{N+2}}_x}\leq c (T^{\theta_1}+T^{\theta_2})\|\nabla u\|_{S(L^2;I)}\|D^{s_c} u\|^{2\sigma}_{S(L^2;I)},
	\end{align}
	where $I=[-T,T]$.
\end{lemma}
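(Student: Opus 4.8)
The plan is to mirror the argument of Lemma \ref{lema1}, replacing the ordinary gradient by the fractional operator $D^{s_c}$. The first inequality is immediate: the pair $(2,\frac{2N}{N-2})$ is $L^2$-admissible (it is the upper endpoint in \eqref{L2adm}) and its H\"older dual is $(2,\frac{2N}{N+2})$; since $\|\cdot\|_{S'(L^2)}$ is defined as an infimum over such dual pairs, one gets $\|g\|_{S'(L^2;I)}\le \|g\|_{L^2_IL^{\frac{2N}{N+2}}_x}$ for $g=D^{s_c}(|x|^{-b}|u|^{2\sigma}u)$. Everything then reduces to bounding this fixed mixed norm.

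For the main inequality, I would distribute $D^{s_c}$ over the product $|x|^{-b}\cdot|u|^{2\sigma}u$ by the fractional Leibniz (Kato--Ponce) rule, together with the fractional chain rule $D^{s_c}(|u|^{2\sigma}u)\sim |u|^{2\sigma}D^{s_c}u$ and the identity $D^{s_c}|x|^{-b}\sim |x|^{-b-s_c}$. This produces, schematically, two spatial terms, $|x|^{-b}|u|^{2\sigma}D^{s_c}u$ and $|x|^{-b-s_c}|u|^{2\sigma}u$. As in Lemma \ref{lema1}, the singular weight forces a split into $B$ and $B^c$: on each region I would apply H\"older in $x$ to peel off $\||x|^{-b}\|_{L^\gamma(A)}$ (resp. $\||x|^{-b-s_c}\|_{L^\gamma(A)}$), which are finite for a suitable $\gamma$ by \eqref{RIxb}, leaving a product of $2\sigma+1$ factors of $u$ in appropriate Lebesgue norms. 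The key bookkeeping is then to use the Sobolev embedding of Lemma \ref{SI} to control exactly one factor by $\|\nabla u\|_{L^{p_0}_x}$ (raising the regularity to $\dot H^1$ via $\|D^{s_c}u\|_{L^{p_1}}\lesssim\|\nabla u\|_{L^{p_0}}$, or $\|u\|_{L^{r}}\lesssim\|\nabla u\|_{L^{p_0}}$) and the remaining $2\sigma$ factors by $\|D^{s_c}u\|_{L^{p}_x}$ through $\|u\|_{L^{r}}\lesssim\|D^{s_c}u\|_{L^{p}}$, with all spatial pairs chosen $L^2$-admissible so that the resulting Strichartz quantities are $\|\nabla u\|_{S(L^2)}$ and $\|D^{s_c}u\|_{S(L^2)}$.

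Finally, I would apply H\"older in time on $I=[-T,T]$ to convert the product of mixed-norm quantities into $\|\nabla u\|_{S(L^2;I)}\|D^{s_c}u\|^{2\sigma}_{S(L^2;I)}$ times a factor $T^{\theta_i}$; the two regions (and the derivative-on-weight versus derivative-on-nonlinearity terms) produce the two exponents $\theta_1,\theta_2$. Positivity of the $\theta_i$ should follow, exactly as in Lemma \ref{lema1}, from the subcriticality condition $\sigma<\frac{2-b}{N-2}$, which leaves a genuine surplus in the time-exponent relation.

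The main obstacle I anticipate is the same pair of technical points that made Lemma \ref{lema1} delicate, now aggravated by the fractional order. First, the fractional derivative of the weight, $D^{s_c}|x|^{-b}\sim|x|^{-b-s_c}$, is a more singular power, so one must verify that $\||x|^{-b-s_c}\|_{L^\gamma(A)}$ remains finite on each region and justify the fractional Leibniz rule across the singularity at the origin; this is where the hypothesis $b<\min\{\tfrac{N}{2},2\}$ (and possibly an extra sub-case, as in the $N=3$, $1\le b<\tfrac32$ analysis of Lemma \ref{lema1}) enters. Second, closing the chain of H\"older and Sobolev exponents so that every intermediate pair is admissible in the sense of \eqref{CPA1}--\eqref{CPA2} while keeping $\theta_1,\theta_2>0$ requires a tightly constrained choice of the exponents; I expect this exponent calculus to be the crux, with the remaining estimates being routine.
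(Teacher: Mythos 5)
Your handling of the first inequality is exactly the paper's: $(2,\tfrac{2N}{N-2})$ is $L^2$-admissible, its dual is $(2,\tfrac{2N}{N+2})$, and $S'(L^2)$ is an infimum over dual pairs. The problem is the main estimate, where your plan to distribute $D^{s_c}$ over $|x|^{-b}\cdot|u|^{2\sigma}u$ by a Kato--Ponce/fractional Leibniz rule has a genuine gap that you flag but do not close, and which I do not believe can be closed along the route you describe. The difficulty is structural: $D^{s_c}$ is a nonlocal operator, so you cannot first split the nonlinearity into its $B$ and $B^c$ pieces and then apply a fractional Leibniz rule on each region ($D^{s_c}(\chi_A f)$ is not supported in $A$); and if you apply Kato--Ponce globally, the term carrying the derivative on the weight produces $\|D^{s_c}|x|^{-b}\|_{L^{p_1}}=c\||x|^{-b-s_c}\|_{L^{p_1}(\mathbb{R}^N)}$, which is infinite for every $p_1$ since no homogeneous power of $|x|$ lies in any global Lebesgue space. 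A ``localized'' fractional Leibniz rule that lets you peel off $\||x|^{-b-s_c}\|_{L^\gamma(A)}$ region by region is not a standard tool and would itself require a nontrivial argument. Since this is precisely the step you identify as ``the crux,'' the proposal as written does not constitute a proof.

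The paper's proof is designed to avoid fractional calculus on the weighted product altogether. The key observation is the Sobolev embedding (Lemma \ref{SI}(i)) applied to the \emph{whole} nonlinearity before any splitting:
\begin{align}
\left\|D^{s_c}\left(|x|^{-b}|u|^{2\sigma}u\right)\right\|_{L^{\frac{2N}{N+2}}_x}\lesssim \left\|\nabla\left(|x|^{-b}|u|^{2\sigma}u\right)\right\|_{L^{p^*}_x},\qquad p^*=\frac{2N\sigma}{4\sigma+2-b},
\end{align}
which is legitimate because $1-s_c=\frac{N}{p^*}-\frac{N+2}{2}\cdot\frac{1}{1}\cdot\frac{1}{N}^{-1}$ --- more precisely $\frac{1}{p^*}-\frac{1-s_c}{N}=\frac{N+2}{2N}$. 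This trades the fractional derivative of order $s_c$ for a full gradient at a lower Lebesgue exponent. After that, only the classical product rule is needed: the two resulting terms carry the weights $|x|^{-b}$ and $|x|^{-b-1}$, the splitting into $B$ and $B^c$ is performed at the level of ordinary Lebesgue norms via H\"older, and the $2\sigma$ ``extra'' factors of $u$ are converted into $\|D^{s_c}u\|_{L^{p_\pm}}$ (and one factor into $\|\nabla u\|_{L^{p_\pm}}$) by Lemma \ref{SI}, with explicit $L^2$-admissible pairs $(q_\pm,p_\pm)$ chosen so that the weight norms are finite on each region and the leftover time exponents $\theta_1,\theta_2$ are positive. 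I would encourage you to adopt this reduction: it replaces the problematic fractional Leibniz step with an elementary embedding, after which your exponent bookkeeping goes through essentially as you outlined for Lemma \ref{lema1}.
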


\begin{proof}		
	The first inequality comes from the fact that the pair $\left(2,\frac{2N}{N-2}\right)$ is $L^2$-admissible. Now, from Sobolev embedding (see Lemma \ref{SI} $(i)$) we obtain
	\begin{align}\label{sobolev}
	\left\|D^{s_c} |x|^{-b}|u|^{2\sigma}u\right\|_{L^\frac{2N}{N+2}_x}\lesssim \left\|\nabla |x|^{-b}|u|^{2\sigma}u\right\|_{L^{p^*}_x},
	\end{align} 
	where $p^*=\frac{2N\sigma}{4\sigma+2-b}$. 
	
	Let $A\subset \mathbb{R}^N$ denotes either $B$ or $B^C$. Applying H\"older's inequality first in space and then in time, we get
	\begin{align}\label{D}
	\left\|\nabla |x|^{-b}|u|^{2\sigma}u\right\|_{L^{2}_IL_x^{p^*}(A)} \lesssim & \left\| \left\||x|^{-b}\right\|_{L^\gamma(A)} \|u\|_{L^{2\sigma \beta}_x}^{2\sigma} \|\nabla u\|_{L^p_x} + \left\||x|^{-b-1}\right\|_{L^d(A)}\|u\|^{2\sigma}_{L^{2\sigma e}_x}\|u\|_{L^{f}_x}\right\|_{L^{2}_I} \nonumber  \\
	\lesssim & \left\|  \left\| |x|^{-b} \right\|_{L^\gamma(A)}  \|D^{s_c}u\|^{2\sigma}_{L^p_x}   \|\nabla u \|_{L_x^{p}} + \left\||x|^{-b-1}\right\|_{L^d(A)}\|D^{s_c}u\|^{2\sigma}_{L^{p}_x}\|\nabla u\|_{L^{p}_x}\right\|_{L^{2}_I} \nonumber  \\
	\lesssim & T^{\frac{1}{q^*}}\left\||x|^{-b} \right\|_{L^\gamma(A)}  \|D^{s_c}u\|^{2\sigma}_{L^q_IL^p_x}   \|\nabla u \|_{L^q_IL_x^{p}}\nonumber \\
	&\qquad + T^{\frac{1}{q^*}}\left\||x|^{-b-1}\right\|_{L^d(A)}\|D^{s_c}u\|^{2\sigma}_{L^q_IL^{p}_x}\|\nabla u\|_{L^q_IL^{p}_x},
	\end{align}
	if the following conditions are satisfied 
	\begin{equation}\label{B1a} 
	\left\{\begin{array}{cl}\vspace{0.1cm}
	\frac{1}{p^*}=&\frac{1}{\gamma}+\frac{1}{\beta}+\frac{1}{p}=\frac{1}{d}+\frac{1}{e}+\frac{1}{f},\\ \vspace{0.1cm} 
	s_c=&\frac{N}{p}-\frac{N}{2\sigma \beta}=\frac{N}{p}-\frac{N}{2\sigma e},\;\;\;1=\frac{N}{p}-\frac{N}{f},\;\;\;p<N\\
	\frac{1}{2}=&\frac{1}{q^*}+\frac{2\sigma}{q}+\frac{1}{q}.
	\end{array}\right.
	\end{equation}
	The above conditions are equivalent to
	\begin{align}\label{sistema} 
	\left\{\begin{array}{cl}\vspace{0.1cm}
	\frac{N}{\gamma}-b&=\frac{N}{p^*}+2\sigma s_c-\frac{N(2\sigma +1)}{p}-b,\,\,p<N \\ \vspace{0.1cm}
	\frac{N}{d}-b-1&=\frac{N}{p^*}+2\sigma s_c-\frac{N(2\sigma +1)}{p}-b \\
	\frac{1}{q^*}&=\frac{1}{2}-\frac{2\sigma+1}{q}.
	\end{array}\right.
	\end{align}
	Our goal is to find a pair $(q,p)$ $L^2$-admissible such that  $\left\||x|^{-b}\right\|_{L^\gamma(A)}$ and $\left\||x|^{-b-1}\right\|_{L^d(A)}$ are bounded quantities (see \eqref{RIxb}), $p<N$ and $\frac{1}{q^*}>0$. Let $(q_{\pm},p_{\pm})$ defined by
	\begin{align}\label{ppm}
	p_{\pm}=\frac{2\sigma(2\sigma+1)N}{2\sigma^2N+2-b\pm \varepsilon}\,\,\,\,\mbox{ and }\,\,\,\,q_{\pm}=\frac{4\sigma(2\sigma+1)}{\sigma N-2+b\mp \varepsilon},
	\end{align}
	for $\varepsilon>0$ sufficiently small. Note that $\frac{2}{q_{\pm}}=\frac{N}{2}-\frac{N}{p_{\pm}}$ and $p_{\pm},q_{\pm}>0$ whenever $\frac{2-b}{N}<\sigma <\frac{2-b}{N-2}$.
	Furthermore, $2<p_{\pm}<\frac{2N}{N-2}$ and $p_{\pm}<N$.
	Indeed, since $\sigma(2\sigma+1)N>2\sigma^2N+2-b\pm\varepsilon$ and $\varepsilon>0$ small enough, we get $p_{\pm}>2$.  As $-\sigma (N-2)+2-b\pm \varepsilon>0$ for $\varepsilon$ sufficiently small, then $-4\sigma^2<-\sigma(N-2)+2-b\pm \varepsilon$. The last inequality is equivalent to $\sigma(2\sigma+1)(N-2)<2\sigma^2N+2-b\pm \varepsilon$, which implies $p_{\pm}<\frac{2N}{N-2}$. If $N\geq 4$, then $N>\frac{2N}{N-2}>p_{\pm}$. When $N=3$, we have $p_{\pm}<3$ if, and only if, $0<2\sigma^2-2\sigma+2-b\pm \varepsilon$ which is true for $\varepsilon$ small enough and $0<b<\frac{3}{2}$. Summing up, 
	\begin{align}
	(q_{\pm},p_{\pm})\mbox{ is }L^2\mbox{-admissible \,\,\,\, and \,\,\,\,} p_{\pm}<N.
	\end{align}
	In addition,
	\begin{align}
	\frac{1}{q^*_{\pm}}=\frac{1}{2}-\frac{2\sigma+1}{q_{\pm}}=\frac{1}{2}-\frac{\sigma-2+b\mp \varepsilon}{4\sigma}=\frac{-\sigma(N-2)+2-b\pm \varepsilon}{4\sigma}>0.
	\end{align}
	Now, if $A=B^c$ we choose $(q,r)=(q_{+},r_{+})$ and $\theta_1=\frac{1}{q^*_{+}}$. Then,  $\frac{N}{\gamma}-b<0$ and $\frac{N}{d}-b-1<0$, and consequently, $\left\||x|^{-b}\right\|_{L^\gamma(B^c)},\;\left\||x|^{-b-1}\right\|_{L^d(B^c)}<\infty$ .
	On the other hand, if $A=B$ we choose the pair $(q,r)=(q_-,p_-)$ and $\theta_2=\frac{1}{q^*_{-}}$, so we also get $\left\||x|^{-b}\right\|_{L^\gamma(B)}, \left\||x|^{-b-1}\right\|_{L^d(B)}<\infty$. 
	Finally, the relations \eqref{sobolev} and \eqref{D} imply the desired result.
\end{proof}

\begin{lemma}\label{lemmacontraction} Let $N\geq 3 $, $0<b<2$ and $\frac{2-b}{N}<\sigma<\frac{2-b}{N-2}$, then there exist $c,\theta_1,\theta_2 >0$ and $\theta\in (0,2\sigma) $ small enough such that the following inequality holds  
	\begin{align}
	\left \|\chi_{B^c} |x|^{-b}|u|^{2\sigma} v\right \|_{S'(\dot H^{-s_c};I)}+&\left\|\chi_{B} |x|^{-b}|u|^{2\sigma} v\right \|_{S'(\dot H^{-s_c};I)}\\
	&\quad\quad\leq c (T^{\theta_1}+T^{\theta_2})\|\nabla u\|^{\theta}_{L^{\infty}_IL^2_x}\|u\|^{2\sigma-\theta}_{S(\dot H^{s_c};I)}\|v\|_{S(\dot H^{s_c};I)},
	\end{align}
where $I=[-T,T]$.
\end{lemma}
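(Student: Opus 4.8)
The plan is to mirror the argument of Lemma \ref{LG1}(i), but adapted to $N\geq3$ and to the local (finite-time) setting, so that a positive power of $T$ is produced. Since the dual norm $\|\cdot\|_{S'(\dot H^{-s_c};I)}$ is an infimum over $\dot H^{-s_c}$-admissible pairs, for each of the two terms I am free to select a single pair $(\tilde a,\hat r)$ and bound the corresponding piece by $\|\chi_A|x|^{-b}|u|^{2\sigma}v\|_{L^{\tilde a'}_I L^{\hat r'}_x}$, where $A$ denotes $B$ or $B^C$. Crucially, I will allow the spatial exponent $\hat r$ (and hence the pair) to differ between the two regions, exactly as in the proof of Lemma \ref{lema2} with its $(q_\pm,p_\pm)$.

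First I would apply H\"older's inequality in space to split the integrand as
$$\left\|\chi_A|x|^{-b}|u|^{2\sigma}v\right\|_{L^{\hat r'}_x}\lesssim \left\||x|^{-b}\right\|_{L^\gamma(A)}\,\|u\|^{\theta}_{L^{\theta r_1}_x}\,\|u\|^{2\sigma-\theta}_{L^{\hat r}_x}\,\|v\|_{L^{\hat r}_x},$$
with $\frac{1}{\hat r'}=\frac1\gamma+\frac{1}{r_1}+\frac{2\sigma-\theta+1}{\hat r}$. Because the only subcritical norm of $u$ available on the right-hand side is $\|\nabla u\|_{L^2}$, the exponent $\theta r_1$ is forced to equal the Sobolev conjugate $2^*=\frac{2N}{N-2}$, so that Lemma \ref{SI}(i) yields $\|u\|_{L^{\theta r_1}_x}\lesssim\|\nabla u\|_{L^2_x}$. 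This fixes $\frac1{r_1}=\frac{\theta(N-2)}{2N}$ and, after substituting the value of $\hat r$ coming from $\dot H^{-s_c}$-admissibility, determines the scaling relation $\frac{N}{\gamma}=N-\frac{\theta(N-2)}{2}-\frac{N(2\sigma-\theta+2)}{\hat r}$, the analogue of \eqref{LG1Hs3}.

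Next I would apply H\"older's inequality in time, sending the $\theta$ copies of $\nabla u$ into $L^\infty_I L^2_x$ and distributing the remaining $2\sigma-\theta$ copies of $u$ and the single copy of $v$ over a $\dot H^{s_c}$-admissible pair $(\hat a,\hat r)$ sharing the same spatial exponent. Using $\frac{1}{\tilde a}=\frac1{\hat a}+s_c$, the leftover time integrability is $\frac1{q_1}=\frac1{\tilde a'}-\frac{2\sigma-\theta+1}{\hat a}=1-s_c-\frac{2\sigma-\theta+2}{\hat a}$, which produces the factor $T^{1/q_1}$ and must be arranged to be strictly positive. Controlling $\|u\|^{2\sigma-\theta}_{L^{\hat a}_I L^{\hat r}_x}$ and $\|v\|_{L^{\hat a}_I L^{\hat r}_x}$ by $\|\cdot\|_{S(\dot H^{s_c};I)}$ then gives the claimed estimate on each region, with $\theta_1,\theta_2$ the two resulting values of $1/q_1$.

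The heart of the matter, and the main obstacle, is to show that a single choice of $\hat r$ can simultaneously satisfy all competing requirements: $(\tilde a,\hat r)$ must lie in the $\dot H^{-s_c}$-admissible window \eqref{H-s}, $(\hat a,\hat r)$ in the $\dot H^{s_c}$-admissible window \eqref{CPA2}, the weight $|x|^{-b}$ must be integrable on $A$ (that is, $\frac N\gamma-b>0$ on $B$ and $\frac N\gamma-b<0$ on $B^C$, by \eqref{RIxb}), and $1/q_1>0$. Since $\frac N\gamma$ increases with $\hat r$, the sign of $\frac N\gamma-b$ can be flipped by taking $\hat r$ slightly larger on $B$ and slightly smaller on $B^C$; I would therefore define two pairs $(\hat a_\pm,\hat r_\pm)$ by an $\varepsilon$-perturbation as in \eqref{ppm}, take $\theta\in(0,2\sigma)$ sufficiently small, and verify that for $\varepsilon$ small the admissibility inequalities of \eqref{CPA2}--\eqref{H-s} together with the positivity of $1/q_1$ all hold throughout the range $N\geq3$, $0<b<\min\{\frac N2,2\}$, $\frac{2-b}{N}<\sigma<\frac{2-b}{N-2}$, the borderline case $N=3$ with $b$ near $\frac32$ being the tightest.
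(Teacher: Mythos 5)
Your proposal follows essentially the same route as the paper's proof: the same H\"older splitting with $\theta$ copies of $u$ placed in $L^{2N/(N-2)}$ and controlled by $\|\nabla u\|_{L^2}$ via Sobolev, the same scaling relation for $\gamma$ and time-exponent identity $\frac{1}{q_1}=1-s_c-\frac{2\sigma+2-\theta}{\hat a}$, and the same device of choosing different admissible pairs on $B$ and $B^C$ so that the sign of $\frac{N}{\gamma}-b$ flips; your quantifier order ($\theta$ first, then $\varepsilon$) correctly captures the delicate point that the positivity margin on $B$ is $\theta(1-s_c)-2\varepsilon$. The only cosmetic difference is that the paper takes the explicit endpoint pair $(a,r)=(\infty,\sigma_c)$ on $B^C$ and the explicit $\varepsilon$-shifted pair $a=\frac{2\sigma+2-\theta}{1-s_c-\varepsilon}$ on $B$ rather than a symmetric $\pm\varepsilon$ perturbation.
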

\begin{proof}
	Let $(\tilde a,r)$ a pair $\dot H^{-s_c}$-admissible and $A\subset \mathbb{R}^N$ denotes either $B$ or $B^C$. As in the previous lemmas, an application of the H\"older inequality first in space and then in time yields
	
	\begin{align}\label{E}	
	\left\||x|^{-b}|u|^{2\sigma}v \right\|_{L^{\tilde a'}_IL_x^{r'}(A)} &\leq \left\|  \left\| |x|^{-b} \right\|_{L^\gamma(A)}  \|u\|^{\theta}_{L_x^{\theta r_1}} \|u\|^{2\sigma-\theta}_{L_{x}^{r}}  \| v \|_{L_x^{r}}\right\|_{L^{\tilde a'}_I} \nonumber  \\
	&\lesssim  \left\||x|^{-b}\right\|_{L^{\gamma}(A)}\left\| \| \nabla u \|^{\theta}_{L_x^{2}}\|u\|^{2\sigma-\theta}_{L^r_x}\| v \|_{L_x^{r}}  \right\|_{L^{\tilde a'}_I}\nonumber \\
	&\lesssim  T^{\frac{1}{q_1}}\left\||x|^{-b}\right\|_{L^{\gamma}(A)}\| \nabla u \|^{\theta}_{L_t^{\infty}L_x^{2}}\|u\|^{2\sigma-\theta}_{L^a_tL^r_x}\| v \|_{L_t^{a}L_x^{r}},
	\end{align}
	assuming the following relations hold
	\begin{equation}\label{E1}
	\left\{\begin{array}{rcl}\vspace{0.1cm}
	\frac{1}{r'}&=&\frac{1}{\gamma}+\frac{1}{r_1}+\frac{2\sigma-\theta}{r}+\frac{1}{r},\\ \vspace{0.1cm}
	1&=&\frac{N}{2}-\frac{N}{\theta r_1}, \\ \vspace{0.1cm}
	\frac{1}{\tilde a'}&=&\frac{1}{q_1}+\frac{2\sigma-\theta}{a}+\frac{1}{a},
	\end{array}\right.
	\end{equation}
	for $\theta\in (0,2\sigma)$ small enough. 
	
	If the pair $(a,r)$ is $\dot H^{s_c}$-admissible (so, $\frac{1}{\tilde a}-\frac{1}{a}=s_c$), the conditions \eqref{E1} are equivalent to
	\begin{equation}\label{ADMREL}
	\left\{\begin{array}{rcl}\vspace{0.1cm}
	\frac{N}{\gamma}-b&=&N-b-\frac{N\theta}{2}+\theta-\frac{N(2\sigma +2-\theta)}{r}\\ \vspace{0.1cm}
	\frac{1}{q_1}&=&1-s_c-\frac{2\sigma+2-\theta}{a}.
	\end{array}\right.
	\end{equation} 
	Now, we shall choose $(a,r)$ satisfying \eqref{ADMREL}, $\frac{1}{q_1}>0$ and $\frac{N}{\gamma}-b>0$, if $A=B$ or $\frac{N}{\gamma}-b<0$, if $A=B^c$ (see \eqref{RIxb})\footnote{Note that if we find this pair then we also find a pair $(\tilde a,r)$ $\dot H^{-s_c}$-admissible using the relation $\frac{1}{\tilde a}-\frac{1}{a}=s_c$.}. We first treat the case $A=B$ and define
	\begin{align}
	a=\frac{2\sigma+2-\theta}{1-s_c-\varepsilon}\,\,\,\,\mbox{ and }\,\,\,\,r=\frac{N\sigma_c(2\sigma+2-\theta)}{N(2\sigma+2-\theta)-2\sigma_c(1-s_c-\varepsilon)},
	\end{align}
	for $0<\varepsilon<\frac{\theta(1-s_c)}{2}$ small enough. 
	Thus, $(a,r)$ is a $\dot H^{s_c}$-admissible\footnote{Since $a>\frac{2}{1-s_c}$ and $\frac{2}{a}=\frac{N}{\sigma_c}-\frac{N}{r}$, we have $r<\frac{2N}{N-2}$. On the other hand,  $r>\sigma_c$ if, and only if, $0<\varepsilon<1-s_c$.} and
	\begin{align}
	\frac{1}{q_1}&=1-s_c-\frac{2\sigma+2-\theta}{a}=\varepsilon>0,\\
	\frac{N}{\gamma}-b&=N-b-\frac{N\theta}{2}+\theta-\frac{N(2\sigma+2-\theta)}{\sigma_c}+2(1-s_c-\varepsilon)=\theta(1-s_c)-2\varepsilon>0.	
	\end{align}
	
	Next, we consider $A=B^c$ and define the following numbers
	\begin{align}
	a=\infty\,\,\,\,\mbox{ and }r=\frac{2N}{N-2s_c}=\frac{2\sigma N}{2-b}.
	\end{align}  
	It is not difficult to see that $(a,r)$ is $\dot H^{s_c}$-admissible and
	\begin{align}
	\frac{1}{q_1}&=1-s_c>0,\\ \frac{N}{\gamma}-b&=N-b-\frac{N\theta}{2}+\theta-\frac{(2\sigma+2-\theta)(2-b)}{2\sigma}=-(2-\theta)(1-s_c)<0.
	\end{align}
	This complete the proof of Lemma \ref{lemmacontraction}.
\end{proof}

Now, with the previous lemmas in hand we are in a position to prove Theorem \ref{LWP}.

\begin{proof}[Proof of Theorem \ref{LWP}]	
For any ($q,p$) $L^2$-admissible and $(a,r)$ $\dot H^{s_c}$-admissible, we set (recall definition \eqref{StrNorm}) 
$$
X=  \left( \bigcap_{(q,p)\in \mathcal{A}_0}L^q\left([-T,T];\dot{H}^{s_c,p}\cap \dot{H}^{1,p}\right)\right)\bigcap \left( \bigcap_{(a,r)\in \mathcal{A}_{s_c}}L^a\left([-T,T]; L^r \right)\right)$$ and 
\begin{equation*}\label{NHs} 
\|u\|_T=\|\nabla u\|_{S\left(L^2;I\right)}+\|D^{s_c} u\|_{S\left(L^2;I\right)}+\|u\|_{S\left(\dot H^{s_c};I\right)},
\end{equation*}
where $I=[-T,T]$.

For $m,T>0$, define the set
\begin{equation*}
S(m,T)=\{u \in X : \|u\|_T\leq m \}
\end{equation*}
with the metric 
$$
d_T(u,v)=\|u-v\|_{S\left(\dot H^{s_c};I\right)}.
$$
In Appendix \ref{appA} we prove that $(S(m,T), d_T)$ is a complete metric space.

We shall show that $G=G_{u_0}$ defined by the right hand side of \eqref{duhamel} 
is a contraction on $(S(m,T), d_T)$ for a suitable choice of $m$ and $T$. Indeed, it follows from the Strichartz inequalities in Lemma \ref{Lemma-Str} that 
\begin{align}\label{NSD} 
\|\nabla G(u)\|_{S\left(L^2;I\right)}&\leq c\|\nabla u_0\|_{L^2}+\left\|\nabla \int_{0}^{t} e^{i(t-t')\Delta} |x|^{-b}|u|^{2\sigma} u(t')\,dt' \right\|_{S\left(L^2;I\right)}\\
&\leq c\|\nabla u_0\|_{L^2}+c\left\|\chi_{B}\nabla |x|^{-b}|u|^{2\sigma} u\right\|_{S'\left(L^2;I\right)}+c\left\|\chi_{B^c}\nabla |x|^{-b}|u|^{2\sigma} u\right\|_{S'\left(L^2;I\right)},
\end{align}
\begin{align}
\|D^{s_c} G(u)\|_{S\left(L^2;I\right)}&\leq c \|D^{s_c} u_0\|_{L^2}+ c\|D^{s_c} |x|^{-b}|u|^{2\sigma} u\|_{S'\left(L^2;I\right)},
\end{align}
and
\begin{align}
\|G(u)\|_{S\left(\dot H^{s_c};I\right)}\leq c\|u_0\|_{\dot H^{s_c}}+c\|\chi_{B}|x|^{-b}|u|^{2\sigma} u\|_{S'\left(\dot H^{-s_c};I\right)}+c\|\chi_{B^c}|x|^{-b}|u|^{2\sigma} u\|_{S'\left(\dot H^{-s_c};I\right)}.
\end{align}
So, applying Lemmas \ref{lema1}-\ref{lemmacontraction} we deduce
\begin{align}\label{F} 
\|\chi_{B}\nabla |x|^{-b}|u|^{2\sigma} u\|_{S'(L^2;I)}+\|\chi_{B^c}\nabla |x|^{-b}|u|^{2\sigma} u\|_{S'(L^2;I)}&\leq c(T^{\theta_1}+T^{\theta_2}) \| \nabla u \|^{2\sigma+1}_{S(L^2;I)},
\end{align}
\begin{align}
\|D^{s_c} |x|^{-b}|u|^{2\sigma} u\|_{S'(L^2;I)}&\leq c(T^{\theta_1}+T^{\theta_2}) \| \nabla u \|_{S(L^2;I)}\|D^{s_c}u\|^{2\sigma}_{S(L^2;I)},
\end{align}
and
\begin{align}
\|\chi_{B} |x|^{-b}|u|^{2\sigma} u\|_{S'(\dot H^{-s_c};I)}+\|\chi_{B^c}|x|^{-b}|u|^{2\sigma} u\|_{S'(\dot H^{-s_c};I)}&\leq c(T^{\theta_1}+T^{\theta_2}) \| \nabla u \|^{\theta}_{L^{\infty}_tL^2_x}\|u\|^{2\sigma+1-\theta}_{S(\dot H^{s_c};I)},
\end{align}
for some $\theta_1,\theta_2>0$. Hence, if $u \in S(m,T)$ then
\begin{align}\label{esa}
\|G(u)\|_T 
&\leq c \|u_0\|_{\dot{H}^{s_c}\cap \dot{H}^1 }+c (T^{\theta_1}+T^{\theta_2}) m^{2\sigma+1}.
\end{align}
Now, choosing $m\geq 2c\|u_0\|_{\dot{H}^{s_c}\cap \dot{H}^1 }$ and $T>0$ such that 
\begin{equation}\label{CTHs} 
c (T^{\theta_1}+T^{\theta_2})m^{2\sigma} < \frac{1}{4},
\end{equation}
we obtain $G(u)\in S(m,T)$. Such calculations establishes that $G$ is well defined on $S(m,T)$.
To prove that $G$ is a contraction we first recall the elementary inequality
\begin{equation}\label{nonlinearity}
||x|^{-b}|u|^{2\sigma} u-|x|^{-b}|v|^{2\sigma} v|\lesssim |x|^{-b}\left( |u|^{2\sigma}+ |v|^{2\sigma} \right)|u-v|.
\end{equation}
 Then, an application of Lemma \ref{lemmacontraction} yields
\begin{align*}
d_T(G(u),G(v))&\leq  c (T^{\theta_1}+T^{\theta_2})\left(\|\nabla u\|^{\theta}_{L^{\infty}_tL^2_x}\|u\|_{S(\dot H^{s_c};I)}^{2\sigma-\theta}+\|\nabla v\|^{\theta}_{L^{\infty}_tL^2_x}\|v\|_{S(\dot H^{s_c};I)}^{2\sigma-\theta}\right)\|u-v\|_{S(\dot H^{s_c};I)}\\
&\leq c (T^{\theta_1}+T^{\theta_2})\left(\|u\|^{2\sigma}_T+\|v\|^{2\sigma}_T\right)d_T(u,v),
\end{align*}
and so, taking $u,v\in S(m,T)$ we get
$$
d_T(G(u),G(v))\leq c (T^{\theta_1}+T^{\theta_2})m^{2\sigma} d_T(u,v).
$$
Therefore, from \eqref{CTHs}, $G$ is also a contraction on $S(m,T)$. Finally, by the contraction mapping principle we have a unique $u \in S(m,T)$ such that $G(u)=u$ and the proof is completed.
\end{proof}

Let $T^*=T^*(u_0)>0$ be the maximal positive time of existence for a solution $u$ to \eqref{PVI} in $\dot H^{s_c}(\mathbb{R}^N)\cap \dot H^1(\mathbb{R}^N)$ given by Theorem \ref{LWP}. If $T^*=\infty$, we say that the solution is global. On the other hand if $T^*<\infty$, as consequence of the proof of Theorem \ref{LWP}, we get the following blow-up alternative and a lower bound on the blow-up rate.
\begin{coro}\label{Blowalt}
	Let $N\geq 3$ and $0<b<\min\{\frac{N}{2},2\}$. If $u$ is a solution to the IVP \eqref{PVI} with finite maximal positive time of existence $0<T^*<\infty$, then $\displaystyle\lim_{t\to T^*}\|u(t)\|_{\dot H^{s_c}\cap \dot H^1}=\infty$. Moreover, there exist $c,\widetilde\theta_1, \widetilde\theta_2>0$ such that
	\begin{align}\label{bounded}
	\|u(t)\|_{\dot H^{s_c}\cap \dot H^1}> \frac{c}{(T^*-t)^{\widetilde\theta_1}+(T^*-t)^{\widetilde\theta_2}},\quad \textit{for all} \quad t\in [0,T^*).
	\end{align}
\end{coro}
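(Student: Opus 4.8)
The plan is to exploit the quantitative nature of the local theory: the existence time produced in the proof of Theorem~\ref{LWP} depends on the initial datum only through its $\dot H^{s_c}\cap\dot H^1$ norm. Indeed, inspecting \eqref{esa}--\eqref{CTHs}, once we set $m=2c\|u_0\|_{\dot H^{s_c}\cap\dot H^1}$, any $T>0$ satisfying $c(T^{\theta_1}+T^{\theta_2})m^{2\sigma}<\tfrac14$ yields a solution on $[-T,T]$. Thus there is a nonincreasing function $T=T(\rho)>0$ of $\rho=\|u_0\|_{\dot H^{s_c}\cap\dot H^1}$, characterized up to constants by
\begin{equation*}
c(T^{\theta_1}+T^{\theta_2})(2c\rho)^{2\sigma}=\tfrac14 ,
\end{equation*}
such that the IVP is solvable on a time interval of length $T(\rho)$. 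The key structural remark is that this bound is invariant under time translation, so that restarting the equation from $u(t_0)$ at any $t_0\in[0,T^*)$ produces a solution on $[t_0,t_0+T(\|u(t_0)\|_{\dot H^{s_c}\cap\dot H^1})]$, which by uniqueness coincides with $u$.

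First I would establish the lower bound on the blow-up rate. Fix $t_0\in[0,T^*)$ and write $M(t_0)=\|u(t_0)\|_{\dot H^{s_c}\cap\dot H^1}$, which is finite since $u\in C([0,T^*);\dot H^{s_c}\cap\dot H^1)$. Were $t_0+T(M(t_0))>T^*$, the solution started from $u(t_0)$ would extend $u$ strictly beyond $T^*$, contradicting maximality; hence $T(M(t_0))\le T^*-t_0$. Since $\theta_1,\theta_2>0$, monotonicity of $s\mapsto s^{\theta_i}$ gives $T(M(t_0))^{\theta_1}+T(M(t_0))^{\theta_2}\le (T^*-t_0)^{\theta_1}+(T^*-t_0)^{\theta_2}$, and combining this with the defining relation for $T(\cdot)$ yields
\begin{equation*}
M(t_0)^{2\sigma}\gtrsim\frac{1}{(T^*-t_0)^{\theta_1}+(T^*-t_0)^{\theta_2}} .
\end{equation*}
Taking the $\tfrac{1}{2\sigma}$ power and using the elementary inequality $(a+b)^{p}\le C_p(a^{p}+b^{p})$ with $p=\tfrac{1}{2\sigma}$, I would set $\widetilde\theta_i=\theta_i/(2\sigma)$ to arrive at \eqref{bounded}, shrinking the constant slightly to make the inequality strict.

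Finally, the blow-up alternative $\lim_{t\to T^*}\|u(t)\|_{\dot H^{s_c}\cap\dot H^1}=\infty$ is an immediate consequence of \eqref{bounded}, since the right-hand side tends to $+\infty$ as $t\to T^*$. I expect the only delicate point to be the bookkeeping in the first step: one must verify that the existence time in Theorem~\ref{LWP} genuinely depends on $u_0$ \emph{only} through $\|u_0\|_{\dot H^{s_c}\cap\dot H^1}$ and not through the full Strichartz norm of the data, which is transparent from \eqref{esa}--\eqref{CTHs}, together with the time-translation argument that legitimizes restarting the flow at $u(t_0)$ and identifying the restarted solution with $u$ by uniqueness.
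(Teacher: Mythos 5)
Your proposal is correct and follows essentially the same route as the paper: both arguments rest on the fact that the local existence time from Theorem~\ref{LWP} depends on the datum only through $\|u_0\|_{\dot H^{s_c}\cap\dot H^1}$ via the smallness condition \eqref{CTHs}, combined with maximality of $T^*$ and the scaling $\widetilde\theta_i=\theta_i/(2\sigma)$. The only (harmless) difference is organizational: the paper first proves the blow-up alternative by contradiction with a bounded sequence $u(t_n)$ and then derives \eqref{bounded}, whereas you derive \eqref{bounded} first and read off the alternative as a consequence.
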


\begin{proof}
	Assume that there exist $0<m<\infty$ and a sequence $\{t_n\}_{n\in \mathbb{N}}$ with $t_n\uparrow T^*$ such that $\|u(t_n)\|_{\dot H^{s_c}\cap \dot H^1}\leq m $ for all $n\geq 1$. Let $T(m)$ denote the existence time obtained by Theorem \ref{LWP} for all initial data bounded above by $m$ and $k\in \mathbb{N}$ such that $t_k+T(m)>T^*$. By \eqref{CTHs} and starting from $u(t_k)$, one can extend $u$ up to $t_k+T(m)$, which contradicts the maximality of $T^*$, and thus,
	\begin{align}
	\|u(t)\|_{\dot H^{s_c}\cap\dot H^1}\to \infty,\,\,\,\,\mbox{ as }t\uparrow T^*.
	\end{align}
	
Moreover, it follows from \eqref{esa} and the fixed point argument that if for some $m>0$,
	\begin{align}
	c\|u(t)\|_{\dot H^{s_c}\cap \dot H^1}+c\left[(\tau-t)^{\theta_1}+(\tau-t)^{\theta_2}\right]m^{2\sigma+1}\leq m,
	\end{align}  
	then $\tau<T^*$. Thus, 
	\begin{align}
	c\|u(t)\|_{\dot H^{s_c}\cap \dot H^1}+c\left[(T^*-t)^{\theta_1}+(T^*-t)^{\theta_2}\right]m^{2\sigma+1}> m,
	\end{align}  
	for all $m>0$. Choosing $m=2c\|u(t)\|_{\dot H^{s_c}\cap \dot H^1}$, it follows that 
	\begin{align}
	\left[(T^*-t)^{\theta_1}+(T^*-t)^{\theta_2}\right]\|u(t)\|_{\dot H^{s_c}\cap \dot H^1}^{2\sigma}> c.
	\end{align}
	In particular, we have
	\begin{align}
	\|u(t)\|_{\dot H^{s_c}\cap \dot H^1}> \frac{c}{(T^*-t)^{\widetilde\theta_1}+(T^*-t)^{\widetilde\theta_2}},
	\end{align}
	with $\widetilde \theta_i=\frac{\theta_i}{2\sigma}$ for $i=1,2$, completing the proof.
\end{proof}

\begin{rem}\label{eqtime}
	It is possible to derive a more precise lower bound on the blow-up rate for type II blow-up solutions. Let $u\in C([0,T^*); \dot H^{s_c}\cap \dot H^1 )$ be a solution to the IVP \eqref{PVI} with finite maximal positive time of existence $0<T^*<\infty$. If we assume the condition
	\begin{align}
	\sup_{t\in [0,T^*)}\|u(t)\|_{\dot H^{s_c}}=M<\infty,
	\end{align} 
then from the local well-posedness theory in $\dot H^{s_c}(\mathbb{R}^N)\cap \dot H^1(\mathbb{R}^N)$, we deduce the following lower bound for the blow-up rate
	\begin{align}\label{lowerbounded}
	\|\nabla u(t)\|_{L^2}\geq \frac{c}{(T^*-t)^{\frac{1-s_c}{2}}}, \quad \textit{for all} \quad t\in [0,T^*). 
	\end{align}
	Indeed, for $t\in [0,T^*)$ we consider the following scaling of $u$
	\begin{align}
	v^t(x,\tau)=\rho^{\frac{2-b}{2\sigma}}(t)u(\rho(t)x,t+\rho^2(t)\tau)
	\end{align}
	where $\rho(t)^{1-s_c}\|\nabla u(t)\|_{L^2}=1$. Hence, $v^t(0)\in \dot H^{s_c}(\mathbb{R}^N)\cap \dot H^1(\mathbb{R}^N)$ and, by simple computations, we can find $m>0$, such that $\|v^t(0)\|_{\dot H^{s_c}\cap \dot H^1}\leq m$ for all $t\in [0,T^*)$. Thus, from the local theory in $\dot H^{s_c}(\mathbb{R}^N)\cap \dot H^1(\mathbb{R}^N)$, there exists $\tau_0$, independent of $t$, such that $v^t$ is defined on $[0,\tau_0]$. Then, $t+\rho^2(t)\tau_0<T^*$, and consequently we obtain \eqref{lowerbounded}.
	
\end{rem}

\section{Gagliardo-Nirenberg inequality and Global solutions}\label{sec4}
As we mentioned in the introduction, using a Sobolev embedding (see Stein-Weiss \citep[Theorem B*]{stein}), Campos and the first author \cite{campos2018critical} established the following Gagliardo-Nirenberg type inequality for functions $f\in \dot H^1(\mathbb{R}^N)\cap L^{\sigma_c}(\mathbb{R}^N)$
\begin{equation}\label{GNcrit}
\int |x|^{-b}|f|^{2\sigma+2} \, dx\leq c \|\nabla f\|^2_{L^2}\|f\|^{2\sigma}_{L^{\sigma_c}},
\end{equation}
where $N\geq 2$, $0<b<2$, $\sigma$ in the intercritical regime ($\frac{2-b}{N}<\sigma<\frac{2-b}{N-2}$, if $N\geq 3$ or $\frac{2-b}{N} < \sigma < \infty$, if $N=2$) and $\sigma_c = \frac{2N\sigma}{2-b}$.
In this section, we investigate the sharp constant for inequality above. As a consequence we also prove that this inequality holds when $N=1$ (with the same restrictions on the other parameters).
\subsection{The ground states}
We first recall that for $N\geq 1$, $0<b<2$ and $\sigma$ in the intercritical regime the second author \cite{Farah_well}, following the ideas introduced by \citet{W_Nonl}, obtained the following Gagliardo-Nirenberg type inequality
\begin{align}
\int |x|^{-b}|f|^{2\sigma+2}\,dx\leq C_{GN}\|\nabla f\|_{L^2}^{2\sigma s_c+2}\|f\|_{L^2}^{2\sigma(1-s_c)}
\end{align}
with the sharp constant $C_{GN}>0$ given explicitly by
\begin{align}
C_{GN}=\left[\frac{2\sigma(1-s_c)}{2\sigma s_c+2}\right]^{\sigma s_c}\frac{2\sigma+2}{(2\sigma s_c+2)\|Q\|_{L^2}^{2\sigma}},
\end{align}
where $Q$ is the unique radially-symmetric, positive, decreasing solution of the elliptic problem 
\begin{align}\label{criticalgroundstate}
\Delta Q+|x|^{-b}|Q|^{2\sigma}Q=Q.
\end{align}
The proof relies mainly on the fact that the functional $f\mapsto\displaystyle\int |x|^{-b}|f|^{2\sigma+2}$ is weakly sequentially continuously (see \citet[Lemma 2.1]{genoud2012critical} and the references therein),
Here we follow a similar approach to study the inequality \eqref{GNcrit} and, as we will see below, the sharp constant is directly connected with the solutions of the  elliptic equation
\begin{align}\label{groundstate}
\Delta \phi +|x|^{-b}|\phi|^{2\sigma}\phi=|\phi|^{\sigma_c-2}\phi.
\end{align}
In the following lemma we obtain two Pohozaev-type identities which are satisfied by any solution  of \eqref{groundstate}.
\begin{lemma}\label{pohozaev}
		Let $N\geq 1$, $0<b<2$, $\frac{2-b}{N}<\sigma<\frac{2-b}{N-2}$ ($\frac{2-b}{N}<\sigma<\infty$, if  $N=1,2$) and $\sigma_c=\frac{2N\sigma}{2-b}$. Let $\phi\in \dot H^1(\Real^N)\cap L^{\sigma_c}(\Real^N)$ be a solution of \eqref{groundstate}. Then the following identities hold
	\begin{align}\label{h1sc}
	\int |\nabla \phi|^2\,dx=\frac{1}{\sigma}\int |\phi|^{\sigma_c}\,dx
	\end{align}
	\begin{align}\label{epsc}
	\int |x|^{-b}|\phi|^{2\sigma+2}\,dx=\frac{\sigma+1}{\sigma}\int |\phi|^{\sigma_c}\,dx.
	\end{align}	
\end{lemma}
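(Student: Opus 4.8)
The plan is to extract two scalar identities from \eqref{groundstate} and then solve a small linear system. Introduce the abbreviations $A=\int|\nabla\phi|^2\,dx$, $B=\int|x|^{-b}|\phi|^{2\sigma+2}\,dx$ and $C=\int|\phi|^{\sigma_c}\,dx$. The two identities \eqref{h1sc} and \eqref{epsc} assert precisely that $A=\frac1\sigma C$ and $B=\frac{\sigma+1}\sigma C$, so it suffices to produce two independent linear relations among $A$, $B$, $C$ and solve.

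The first relation I would obtain by multiplying \eqref{groundstate} by $\bar\phi$, integrating over $\Real^N$ and taking real parts. Since $\mathrm{Re}\int\Delta\phi\,\bar\phi\,dx=-A$ after one integration by parts, while the two nonlinear terms reproduce exactly $B$ and $C$, this yields the \emph{energy relation} $A=B-C$. The second, and main, relation is a Pohozaev (dilation) identity obtained by pairing \eqref{groundstate} with the dilation field $x\cdot\nabla\bar\phi$ and taking real parts. Here the computation rests on the standard multiplier identity $\mathrm{Re}\int\Delta\phi\,(x\cdot\nabla\bar\phi)\,dx=\frac{N-2}{2}A$, together with the pointwise formulas $\mathrm{Re}\bigl[|\phi|^{2\sigma}\phi\,(x\cdot\nabla\bar\phi)\bigr]=\frac{1}{2\sigma+2}\,x\cdot\nabla\bigl(|\phi|^{2\sigma+2}\bigr)$ and $\mathrm{Re}\bigl[|\phi|^{\sigma_c-2}\phi\,(x\cdot\nabla\bar\phi)\bigr]=\frac{1}{\sigma_c}\,x\cdot\nabla\bigl(|\phi|^{\sigma_c}\bigr)$. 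For the inhomogeneous term one integrates by parts using $\nabla\cdot(|x|^{-b}x)=(N-b)|x|^{-b}$, producing the factor $-\frac{N-b}{2\sigma+2}B$; the critical term contributes $\frac{N}{\sigma_c}C$. Collecting everything gives the \emph{Pohozaev relation} $\frac{N-2}{2}A-\frac{N-b}{2\sigma+2}B+\frac{N}{\sigma_c}C=0$.

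Finally I would eliminate $A$ by substituting $A=B-C$ into the Pohozaev relation and simplify the coefficients with the definitions $\sigma_c=\frac{2N\sigma}{2-b}$ and $s_c=\frac N2-\frac{2-b}{2\sigma}$, which give the convenient identities $\frac{N}{\sigma_c}=\frac{2-b}{2\sigma}=\frac N2-s_c$ and $2-b=\sigma(N-2s_c)$. A short computation then shows that the coefficient of $C$ reduces to $1-s_c$ and the coefficient of $B$ to $-\frac{\sigma(1-s_c)}{\sigma+1}$, so the relation becomes $-\frac{\sigma(1-s_c)}{\sigma+1}B+(1-s_c)C=0$. Since $0<s_c<1$ we may divide by $1-s_c$ to obtain $B=\frac{\sigma+1}{\sigma}C$, which is \eqref{epsc}, and then $A=B-C=\frac1\sigma C$ gives \eqref{h1sc}.

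The step I expect to be the main obstacle is the rigorous justification of the integrations by parts for a solution only assumed to lie in $\dot H^1(\Real^N)\cap L^{\sigma_c}(\Real^N)$: one must control the singularity of $|x|^{-b}$ at the origin and guarantee that the boundary terms at infinity generated by the dilation multiplier vanish. I would handle this by first upgrading the regularity and decay of $\phi$ through elliptic estimates applied to \eqref{groundstate} (away from the origin the right-hand side is locally regular, while near the origin the assumption $b<\min\{N,2\}$ keeps $|x|^{-b}$ integrable against $|\phi|^{2\sigma+2}$), and then carrying out the multiplier computation against a cutoff $\chi(x/R)$ and letting $R\to\infty$, checking that the commutator and boundary contributions tend to zero. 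All remaining manipulations are routine once this justification is in place.
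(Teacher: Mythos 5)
Your proposal is correct and follows essentially the same route as the paper: pairing \eqref{groundstate} with $x\cdot\nabla\bar\phi$ to get the Pohozaev relation, pairing with $\bar\phi$ to get $B=A+C$, and solving the resulting linear system (your coefficient computations check out). The only difference is that you explicitly flag the need to justify the integrations by parts via cutoffs and regularity, a point the paper passes over by treating the computation formally as ``classical.''
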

\begin{proof}
	The proof of these identities is classical and we provide the details for the reader's convenience. Multiplying the equation \eqref{groundstate} by $x\cdot \nabla \bar \phi$ and taking the real part, we obtain
	\begin{align}\label{l1}
	\mbox{Re}\,\int \Delta \phi\, x\cdot \nabla \bar \phi\,dx+\mbox{Re}\,\int |x|^{-b}|\phi|^{2\sigma}\phi\, x\cdot \nabla \bar \phi\,dx=\mbox{Re}\,\int |\phi|^{\sigma_c-2}\phi\, x\cdot \nabla \bar \phi\,dx.
	\end{align}
	We consider the first term in the left hand side of \eqref{l1}. Indeed, integrating by parts, we get
	\begin{align}
	\int \Delta \phi\,x\cdot \nabla \bar{\phi}\,dx&=\sum_{i,j=1}^N\int \partial^2_{i}\phi x_j\partial_j\bar \phi\,dx=-\sum_{i,j=1}^N\int \partial_{i}\phi\partial_i(x_j\partial_j\bar \phi)\,dx\\
	&=-\sum_{i=1}^N\int \partial_{i}\phi\partial_i\bar \phi\,dx-\sum_{i,j=1}^N\int \partial_i \phi x_j\partial^2_{ij}\bar \phi\,dx\\
	&=-\int |\nabla \phi|^2\,dx-\sum_{i,j=1}^N\int 
	\phi(\delta_{ij}\partial^2_{ij}\bar \phi+x_j\partial^3_{iij}\bar \phi)\,dx\\
	&=-2\int |\nabla \phi|^2\,dx-\sum_{i,j=1}^N\int 
	\phi x_j\partial^3_{iij}\bar \phi\,dx\\
	&=-2\int |\nabla \phi|^2\,dx-\sum_{i,j=1}^N\int 
	(\partial_j\phi x_j+\phi)\partial^2_{ii}\bar \phi\,dx\\
	&=-2\int |\nabla \phi|^2\,dx-\int 
	\Delta \phi\,x\cdot \nabla \bar \phi\,dx+N\int |\nabla \phi|^2\,dx,
	\end{align}
	and thus,
	\begin{align}\label{l2}
	\mbox{Re}\,\int \Delta \phi\,x\cdot \nabla \bar \phi\,dx=\left(\frac{N}{2}-1\right)\int |\nabla \phi|^2\,dx.
	\end{align}
	For the second term in the left hand side of \eqref{l1}, we also integrate by parts to deduce
	\begin{align}
	\int |x|^{-b}|\phi|^{2\sigma}\phi\,x\cdot \nabla \phi\,dx&=\sum_{j=1}^N\int |x|^{-b}|\phi|^{2\sigma}\phi x_j\partial_j\bar \phi\,dx=-\sum_{j=1}^{N}\int\partial_j(|x|^{-b}|\phi|^{2\sigma}\phi x_j)\bar \phi\,dx\\
	&=b\int |x|^{-b}|\phi|^{2\sigma+2}\,dx - 2\sigma\mbox{Re}\,\int |x|^{-b}|\phi|^{2\sigma}\bar \phi\, x\cdot\nabla \phi\,dx\\
	&\quad-N\int |x|^{-b}|\phi|^{2\sigma+2}\,dx-\int |x|^{-b}|\phi|^{2\sigma}\bar \phi\,x\cdot \nabla \phi\,dx,
	\end{align}
	which implies
	\begin{align}\label{l3}
	\mbox{Re} \int |x|^{-b}|\phi|^{2\sigma}\phi\,x\cdot \nabla \bar \phi\,dx=-\frac{N-b}{2\sigma+2}\int |x|^{-b}|\phi|^{2\sigma+2}\,dx.
	\end{align}
	The term in the right hand side of \eqref{l1} can be treated in the same way to obtain
	\begin{align}\label{l4}
	\mbox{Re}\,\int |\phi|^{\sigma_c-2}\phi\,x\cdot \nabla \bar \phi\,dx=-\frac{2-b}{2\sigma}\int |\phi|^{\sigma_c}\,dx.
	\end{align}
	From \eqref{l1}, \eqref{l2}, \eqref{l3} and \eqref{l4}, we get
	\begin{align}\label{l5}
	\frac{N-b}{2\sigma+2}\int |x|^{-b}|\phi|^{2\sigma+2}\,dx=\left(\frac{N}{2}-1\right)\int |\nabla \phi|^2\,dx+\frac{2-b}{2\sigma}\int |\phi|^{\sigma_c}\,dx.
	\end{align}
	Now, multiplying \eqref{groundstate} by $\phi$ and integrating, we obtain
	\begin{align}\label{l6}
	\int |x|^{-b}|\phi|^{2\sigma+2}\,dx=\int |\nabla \phi|^2\,dx+\int |\phi|^{\sigma_c}\,dx.
	\end{align}
	From the identities \eqref{l5} and \eqref{l6}, it is easy to deduce the relations \eqref{h1sc} and \eqref{epsc}. 
\end{proof}

Now, consider the functional space
\begin{align}
L^{2\sigma+2}_b(\Real^N)=\left\{f\in \mathcal{M}(\Real^N;\mathbb{C});\,\,\,\int |x|^{-b}|f|^{2\sigma+2}\,dx<\infty\right\},
\end{align}
where $\mathcal{M}(\Real^N;\mathbb{C})$ denotes the set of all measurable function $f:\Real^N \rightarrow \mathbb{C}$. In this space we define the norm
$$
\|f \|_{L^{2\sigma+2}_b}=\left(\int |x|^{-b}|f|^{2\sigma+2}\,dx\right)^{\frac{1}{2\sigma+2}}.
$$
In the next proposition we prove an useful compact embedding.
%
\begin{prop}\label{WSC}
Let $N\geq 1$, $0<b<2$, $\frac{2-b}{N}<\sigma<\frac{2-b}{N-2}\, \,(\frac{2-b}{N}<\sigma<\infty, \mbox{ if } N=1,2)$ and $\sigma_c=\frac{2N\sigma}{2-b}$. Then, the embedding  
\begin{align}
\dot H^{1}(\mathbb{R}^N)\cap L^{\sigma_c}(\mathbb{R}^N)\subset L^{2\sigma+2}_b(\mathbb{R}^N) 
\end{align}
is compact.
\end{prop}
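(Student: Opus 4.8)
The plan is to first observe that the continuity of the embedding is already contained in \eqref{GNcrit}, so the whole issue is the gain of compactness. I would argue by sequential compactness: take $\{f_n\}$ bounded in $\dot H^1(\Real^N)\cap L^{\sigma_c}(\Real^N)$, say $\|\nabla f_n\|_{L^2}+\|f_n\|_{L^{\sigma_c}}\le M$. Since $\dot H^1$ (for $N\ge 3$) and $L^{\sigma_c}$ are reflexive, after passing to a subsequence I may assume $f_n\rightharpoonup f$ weakly in both spaces, with $\|\nabla f\|_{L^2}+\|f\|_{L^{\sigma_c}}\le M$ by weak lower semicontinuity, and $f_n\to f$ a.e. The goal is then to upgrade this to strong convergence $f_n\to f$ in $L^{2\sigma+2}_b$. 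To do so I would split $\Real^N$ into three pieces determined by two radii $0<\rho<R$: a neighborhood of the origin $\{|x|<\rho\}$, a compact annulus $\{\rho\le|x|\le R\}$, and a neighborhood of infinity $\{|x|>R\}$, and control each contribution to $\int|x|^{-b}|f_n-f|^{2\sigma+2}\,dx$ separately.

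The heart of the argument is a single weighted H\"older estimate that handles the origin and infinity simultaneously. For $g\in\dot H^1\cap L^{\sigma_c}$ and a region $A$, I would write $2\sigma+2=\lambda+\mu$ and apply the three-term H\"older inequality
\[
\int_A |x|^{-b}|g|^{2\sigma+2}\,dx\le \big\||x|^{-b}\big\|_{L^{\tau}(A)}\,\|g\|_{L^{2^*}(A)}^{\lambda}\,\|g\|_{L^{\sigma_c}(A)}^{\mu},
\]
with $\tfrac1\tau=1-\tfrac{\lambda}{2^*}-\tfrac{\mu}{\sigma_c}$ and $2^*=\tfrac{2N}{N-2}$, using Lemma \ref{SI} to bound $\|g\|_{L^{2^*}}\lesssim\|\nabla g\|_{L^2}$. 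The decisive computation is that the scale-invariant choice $\lambda=2,\ \mu=2\sigma$ gives exactly $\tfrac1\tau=\tfrac bN$, the borderline case where $|x|^{-b}$ just fails to be integrable at both ends. Perturbing $\lambda$ off this value, and noting $\tfrac{1}{\sigma_c}>\tfrac{1}{2^*}$ since $s_c<1$, moves $\tfrac1\tau$ across $\tfrac bN$: taking $\lambda=2-\eta$ makes $\tfrac1\tau<\tfrac bN$, so $|x|^{-b}\in L^\tau(\{|x|>R\})$ with $\||x|^{-b}\|_{L^\tau}\sim R^{N/\tau-b}\to0$, while $\lambda=2+\eta$ makes $\tfrac1\tau>\tfrac bN$, so $|x|^{-b}\in L^\tau(\{|x|<\rho\})$ with norm $\sim\rho^{N/\tau-b}\to0$ (cf. \eqref{RIxb}). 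Applying this to $g=f_n-f$, whose norms are bounded by $2M$, makes the origin and infinity contributions smaller than any prescribed $\varepsilon$, uniformly in $n$, once $\rho$ is small and $R$ large.

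On the fixed annulus $\{\rho\le|x|\le R\}$ the weight satisfies $|x|^{-b}\le\rho^{-b}$, and since $2\sigma+2<2^*$ (because $\sigma<\tfrac{2-b}{N-2}$) the classical Rellich--Kondrachov theorem gives that $H^1$ of this bounded domain embeds compactly into $L^{2\sigma+2}$; hence along the subsequence $\int_{\rho\le|x|\le R}|x|^{-b}|f_n-f|^{2\sigma+2}\,dx\to0$ as $n\to\infty$. Combining the three regions, first fixing $\rho,R$ to make the two tails small uniformly in $n$ and then letting $n\to\infty$ on the annulus, shows $\limsup_n\int|x|^{-b}|f_n-f|^{2\sigma+2}\,dx\le\varepsilon$ for every $\varepsilon$, i.e. $f_n\to f$ in $L^{2\sigma+2}_b$, which is the asserted compactness.

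I expect the main obstacle to be the tail at infinity: unlike the singular-at-origin region, the loss of compactness of Sobolev embeddings on all of $\Real^N$ is genuine, and one cannot merely combine $|x|^{-b}\le R^{-b}$ with a global $L^{2\sigma+2}$ bound, which is unavailable. The weighted H\"older estimate above is exactly what converts the decay of the weight into a quantitative, uniform smallness of the tail, by trading the supercritical integrability coming from $\dot H^1$ against the $L^{\sigma_c}$ control; getting the perturbation of the exponent on the correct side of the scale-invariant value $\tfrac1\tau=\tfrac bN$ is the key point. For $N=1,2$, where $2^*$ is not available, the same scheme works after replacing $L^{2^*}$ by any Lebesgue exponent reachable from $\dot H^1$ via Lemma \ref{SI} or the Gagliardo--Nirenberg inequality of Lemma \ref{GNgeral}, with the identical perturbation argument.
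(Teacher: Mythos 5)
Your argument is correct for $N\geq 3$ and shares the overall skeleton of the paper's proof (extract a weak limit, use Rellich--Kondrachov on a bounded region, exploit the decay of the weight elsewhere), but your handling of the two singular regions is genuinely different, and at infinity it is actually more robust. The paper splits $\Real^N$ into only two pieces: on the ball $B(0,R)$ it applies H\"older with $|x|^{-b}\in L^{\gamma_2'}(B(0,R))$, $\gamma_2'<N/b$, together with the compact embedding $H^1(B(0,R))\subset L^{(2\sigma+2)\gamma_2}(B(0,R))$ (possible precisely because $\sigma<\tfrac{2-b}{N-2}$), so the origin is absorbed into the compactness step rather than made small by shrinking $\rho$; on the exterior it uses the pointwise bound $|x|^{-b}\leq R^{-b}$ together with a uniform bound on $\int|w_n|^{2\sigma+2}$ coming from \eqref{lmtaunif}. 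Your worry about that last step is well founded: interpolating between $L^{\sigma_c}$ and $L^{2^*}$ only covers exponents $p\in[\sigma_c,2^*]$, and $2\sigma+2\geq\sigma_c$ is equivalent to $\sigma\leq\tfrac{2-b}{N-2+b}$, which fails for part of the intercritical range (for instance $N=3$, $b=1$, $\sigma=3/4$ gives $2\sigma+2=3.5<\sigma_c=4.5$). Your scale-invariant weighted H\"older estimate with $\lambda=2\mp\eta$, which converts the smallness of $\bigl\||x|^{-b}\bigr\|_{L^{\tau}(|x|>R)}$ directly into a tail bound uniform in $n$ without ever invoking a global $L^{2\sigma+2}$ norm, closes exactly this point; the computation $\tfrac1\tau=\tfrac bN+\eta\bigl(\tfrac1{2^*}-\tfrac1{\sigma_c}\bigr)$ and the sign of $\tfrac1{2^*}-\tfrac1{\sigma_c}$ (negative since $s_c<1$) are correct. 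The only routine items to make explicit are the $L^2$ bound on the annulus (via H\"older from $L^{\sigma_c}$, using $\sigma_c>2$ and the finite measure of the annulus), which is needed before invoking Rellich--Kondrachov, and the identification of the strong local limit with $f$.

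The one genuine gap is the case $N=1,2$, which the statement covers but your closing sentence only gestures at. For $N=2$ the space $\dot H^1$ embeds into no Lebesgue space, and for $N=1$ with $b<1$ one has $\mu=2\sigma>\sigma_c$, so the three-term H\"older inequality as written breaks down. The repair is to replace the factor $\|g\|_{L^{2^*}}^{\lambda}$ by the joint Gagliardo--Nirenberg bound $\|g\|_{L^{p_0}}\lesssim\|\nabla g\|_{L^2}^{\theta_0}\|g\|_{L^{\sigma_c}}^{1-\theta_0}$ for some large finite $p_0>\sigma_c$ (Lemma \ref{GNgeral}, which uses \emph{both} norms, not $\dot H^1$ alone) and to redo the exponent bookkeeping around the same scale-invariant borderline $\tfrac1\tau=\tfrac bN$. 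This does go through, and it is what the footnote attached to \eqref{lmtaunif} is implicitly doing, but it is not a literal substitution of ``$L^{2^*}$ by any Lebesgue exponent reachable from $\dot H^1$.''
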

\begin{proof}
Let $\{f_n\}_{n\in \mathbb{N}}$ be a bounded sequence in $\dot H^{1}(\mathbb{R}^N)\cap L^{\sigma_c}(\mathbb{R}^N) $. Then, there exists $f\in \dot H^{1}(\mathbb{R}^N)\cap L^{\sigma_c}(\mathbb{R}^N) $ such that, up to a subsequence, $f_n\rightharpoonup f$ in $\dot H^{1}(\mathbb{R}^N)\cap L^{\sigma_c}(\mathbb{R}^N)$, as $n\to \infty$. Defining $w_n=f_n-f$, we will show that 
\begin{align}
\int|x|^{-b}|w_n|^{2\sigma+2}\,dx\to 0, \quad \textnormal{as} \quad n\to \infty.
\end{align}
First, since $\{w_n\}_{n\in \mathbb{N}}$ is uniformly bounded in $\dot H^{1}(\mathbb{R}^N)\cap L^{\sigma_c}(\mathbb{R}^N) $, from the Gagliardo-Nirenberg inequality\footnote{For $N\geq 3$ it is enough to use the Sobolev embedding \eqref{SI} and interpolation.} \eqref{GNgeral2}, we get that 
\begin{align}\label{lmtaunif}
\{w_n\}_{n\in \mathbb{N}}\quad \textnormal{ is uniformly bounded in}\quad  L^p(\Real^N)\quad \textnormal{for all} \quad p\in (\sigma_c,2^*),
\end{align}
where 
\begin{align}
2^*=
\begin{cases}
\frac{2N}{N-2},\,\,N\geq 3\\
\infty,\,\,\,\,N=1,2.
\end{cases}
\end{align}

Moreover, given $\varepsilon>0$ for $R\geq \varepsilon^{-\frac{1}{b}}$ we have from \eqref{lmtaunif} that
\begin{align}\label{inteps}
\int_{\Real^N\backslash B(0,R)}|x|^{-b}|w_n|^{2\sigma+2}\,dx\leq {\varepsilon}\int_{\Real^N\backslash B(0,R)}|w_n|^{2\sigma+2}\leq C\varepsilon,
\end{align}
where $B(0,R)=\{ x\in \mathbb{R}^N;|x|\leq R\}$.


Now, we estimate the integral over the ball $B(0,R)$. 
Note that
\begin{equation}
\int_{B(0,R)}|\nabla w_n|^2\,dx\leq \int|\nabla w_n|^2\,dx.
\end{equation} 
Moreover, by H\"older's inequality and Sobolev embedding we get for $p\in(2,2^*)$
\begin{equation}
\int_{B(0,R)}|w_n|^2\,dx\leq C_{R,N} \| w_n\|_{L^{p}}^{2}\leq C_{R,N}\|\nabla w_n\|_{L^2}^2.
\end{equation}
and therefore $\{w_n\}_{n\in \mathbb{N}}$ is uniformly bounded in $ H^{1}(B(0,R))$. 



From the compact embedding on bounded domains $ H^1\left(B(0,R)\right)\subset L^p\left(B(0,R)\right)$ for $p\in (2,2^*)$ (see, for instance, \citet[Theorem 1.3.4]{cazenave}\footnote{In the case $N=1$ we have $H^1(B(0,R))\subset L^{\infty}(B(0,R))$. Since the embedding $L^{\infty}(B(0,R))\subset L^p(B(0,R))$ for all $p\geq1$ is continuous, we also have that the embedding $H^1(B(0,R))\subset L^p(B(0,R))$, for all $p\geq1$ is compact.}) and the fact that $w_n\rightharpoonup 0$ in $L^{\sigma_c}(\mathbb{R}^N)$, we deduce that 
\begin{align}\label{convstrong}
w_n\to 0\mbox{ in } L^p\left(B(0,R)\right)\quad \textnormal{for all} \quad p\in (2,2^*),
\end{align}
up to a subsequence. Again, since $\sigma<\frac{2-b}{N-2}$, we obtain $\frac{N-b}{N}-(\sigma+1)\frac{N-2}{N}>0$. Thus, we can choose $\gamma_2'\geq 1$ such that $\gamma_2'<\frac{N}{b}$ and $2<(2\sigma+2)\gamma_2<2^*$ (where $\gamma_2'$ is such that $\frac{1}{\gamma_2}+\frac{1}{\gamma_2'}=1)$. Hence, by H\"older's inequality we have
\begin{align}
\int_{B(0,R)}|x|^{-b}\left|w_n\right|^{2\sigma+2}\,dx&\leq \left(\int_{B(0,R)}|x|^{-b\gamma_2'}\,dx\right)^{\frac{1}{\gamma_2'}}\left(\int_{B(0,R)}\left|w_n\right|^{(2\sigma+2)\gamma_2}\right)^{\frac{1}{\gamma_2}}\\
&\leq \left(\int_{B(0,R)}|x|^{-b\gamma_2'}\,dx\right)^{\frac{1}{\gamma_2'}}\left(\int_{B(0,R)}|w_n|^{(2\sigma+2)\gamma_2}\,dx\right)^\frac{1}{\gamma_2}.
\end{align}
So, in view of \eqref{convstrong}, given $\varepsilon>0$ there exists $n_0$ such that for any $n\geq n_0$
\begin{align}
\int_{B(0,R)}|x|^{-b}\left|w_n\right|^{2\sigma+2}\,dx<{\varepsilon},
\end{align}
which completes the proof of Proposition \ref{WSC}.
\end{proof}
We now show Theorem \ref{GNU}, which characterizes the sharp constant for the Gagliardo-Nirenberg type inequality \eqref{GNcrit}.
\begin{proof}[Proof of Theorem \ref{GNU}]
	Given $f\in \dot H^{1}(\mathbb{R}^N)\cap L^{\sigma_c}(\mathbb{R}^N)$, define the Weinstein functional
	\begin{align}
	J(f)=\frac{\|\nabla f\|_{L^2}^2\|f\|_{L^{\sigma_c}}^{2\sigma}}{ \left\||\cdot|^{\frac{-b}{2\sigma+2}}f\right\|_{L^{2\sigma+2}}^{2\sigma+2}}.
	\end{align}
	We set $J=\displaystyle\inf_{f\in \dot H^{1}\cap L^{\sigma_c}, f\neq 0} J(f)$ and consider a minimizing sequence $\{f_n\}_{n\in \mathbb{N}}$. From \eqref{GNcrit}, we can deduce that $J>0$. Now rescale $\{f_n\}_{n\in \mathbb{N}}$ by setting
	\begin{align}
	g_n(x)=\mu_nf_n(\theta_n x),
	\end{align}
	with
	\begin{align}
	\mu_n=\frac{\|f_n\|_{L^{\sigma_c}}^{\frac{N-2}{2(1-s_c)}}}{\|\nabla f_n\|^{\frac{2-b}{2\sigma(1-s_c)}}_{L^2}}\,\,\,\,\,\mbox{ and }\,\,\,\,\theta_n=\left(\frac{\|f_n\|_{ L^{\sigma_c}}}{\|\nabla f_n\|_{L^2}}\right)^{\frac{1}{1-s_c}}.
	\end{align}
	A direct calculation implies
%
	
	\begin{align}
\|g_n\|_{L^{\sigma_c}}^{\sigma_c}=\mu_n^{\sigma_c}\int |f_n(\theta_n x)|^{\sigma_c}\,dx=\frac{\mu_n^{\sigma_c}}{\theta_n^{N}}\|f_n\|_{L^{\sigma_c}}^{\sigma_c}=1
\end{align}
and
\begin{align}
	\|\nabla g_n\|_{L^2}^2=\mu_n^2\int \theta_n^2|\nabla f_n(\theta_n x)|^2\,dx=\frac{\mu_n^2}{\theta_n^{N-2}}\|\nabla f_n\|_{L^2}^2=1,
	\end{align}
	that is, $\|g_n\|_{L^{\sigma_c}}=\|\nabla g_n\|_{L^2}=1.$ Moreover, since $J$ is invariant under this scaling, $\{g_n\}_{n\in \mathbb{N}}$ is also a minimizing sequence and bounded in $\dot H^{1}(\mathbb{R}^N)\cap L^{\sigma_c}(\mathbb{R}^N)$. Furthermore, there exists $g^{*}\in \dot H^{1}(\mathbb{R}^N)\cap L^{\sigma_c}(\mathbb{R}^N)$ such that, up to a subsequence, $g_n\rightharpoonup g^{*}$ weakly in $\dot H^{1}(\mathbb{R}^N)\cap L^{\sigma_c}(\mathbb{R}^N)$, and then
	\begin{align}
	\|g^*\|_{L^{\sigma_c}}\leq 
	1\,\,\,\,\,\,\mbox{ and }\,\,\,\,\,\,\|\nabla g^*\|_{L^2}\leq 1.
	\end{align}
	Thus, from Proposition \ref{WSC}, we have
	\begin{align}
	J\leq J(g^*)\leq \frac{1}{\left\||\cdot|^{\frac{-b}{2\sigma+2}}g^*\right\|_{L^{2\sigma+2}}^{2\sigma+2}}=\lim_{n\to\infty} \frac{1}{\left\||\cdot|^{\frac{-b}{2\sigma+2}}g_n\right\|_{L^{2\sigma+2}}^{2\sigma+2}}=J.
	\end{align}
	Consequently,
	\begin{align}
	J(g^*)=\frac{1}{\left\||\cdot|^{\frac{-b}{2\sigma+2}}g^*\right\|_{L^{2\sigma+2}}^{2\sigma+2}} \,\,\,\,\,\,\,\,\,\,\textnormal{ and }\,\,\,\,\,\,\,\,\,\,\|g^*\|_{L^{\sigma_c}}=\|\nabla g^*\|_{L^2}=1.
	\end{align}
	In particular, $g^*\neq 0$ and is a minimizer for the functional $J$. Moreover, $g^*$ is a solution for the Euler-Lagrange equation
	\begin{align}\label{el}
	\frac{d}{ds}\Big|_{s=0} J(g_s)=0,
	\end{align}
	where $g_s=g^*+s\varphi$ for $\varphi\in \mathcal S(\Real^N)$.
	On the other hand, computing the Fr\'echet derivative, we deduce
%
$$
\frac{d}{ds}\Big|_{s=0}\left\||\cdot|^ {-\frac{b}{2\sigma+2}}g_s\right\|_{L^{2\sigma+2}}^{2\sigma+2}=(2\sigma+2)\mbox{Re}\,\left\langle|x|^{-b}|g^*|^{2\sigma}g^*,\varphi\right\rangle,
$$
$$
\frac{d}{ds}\Big|_{s=0}\|\nabla g_s\|_{L^2}^2=2\mbox{Re}\,\langle\nabla g^*, \nabla \varphi\rangle=2\mbox{Re}\,\langle -\Delta g^*,\varphi \rangle
$$
and
$$
\frac{d}{ds}\Big|_{s=0}\|g_s\|_{L^{\sigma_c}}^{2\sigma}=\frac{2\sigma}{\sigma_c}\|g^*\|_{L^{\sigma_c}}^{2\sigma-\sigma_c}\frac{d}{ds}\Big|_{s=0}\|g_s\|_{L^{\sigma_c}}^{\sigma_c}=2\sigma\|g^*\|_{L^{\sigma_c}}^{2\sigma-\sigma_c}\mbox{Re}\,\left\langle|g^*|^{\sigma_c-2}g^*,\varphi\right\rangle.
$$
	From \eqref{el}, $\|v^*\|_{L^{\sigma_c}}=\|\nabla v^*\|_{L^2}^2=1$ and $$J(f)\left\||\cdot|^ {-\frac{b}{2\sigma+2}}f\right\|_{L^{2\sigma+2}}^{2\sigma+2}=\|\nabla f\|_{L^2}^2\|f\|_{L^{\sigma_c}}^{2\sigma},$$ we have
	\begin{align}\label{eqell}
	\mbox{Re}\,\left\langle -\Delta g^*-(\sigma+1)J|x|^{-b}|g^*|^{2\sigma}g^*+\sigma|g^*|^{\sigma_c-2}g^*,\varphi\right\rangle=0,
	\end{align}
	where $\langle\cdot,\,\cdot\rangle$ is the inner product in $L^2$.
	Taking $i\varphi$ instead of $\varphi$ in \eqref{eqell} and using that Re\,$(iz)=-$Im\,$(z)$, we get
	\begin{align}
	\mbox{Im}\,\left\langle -\Delta v^*-(\sigma+1)J|x|^{-b}|v^*|^{2\sigma}v^*+\sigma|v^*|^{\sigma_c-2}v^*,\varphi\right\rangle=0.
	\end{align}
	Consequently, $g^*$ is a solution for elliptic equation
	\begin{align}
	\Delta g^*+(\sigma+1)J|x|^{-b}|g^*|^{2\sigma}g^*=\sigma|g^*|^{\sigma_c-2}g^*.
	\end{align}
	Now, we take $V$ defined by $g^*(x)=\alpha V(\beta x)$ with
	\begin{align}\label{alfa}
	\alpha=\left[\frac{(\sigma+1)J}{\sigma^{\frac{2-b}{2}}}\right]^{\frac{2N}{(2-b)[N(\sigma_c-2)-2\sigma_c]}}\,\,\,\,\,\,\,\,\,\,\,\,\mbox{ and } \,\,\,\,\,\,\,\,\,\,\,\,\beta=\sigma^{\frac{1}{2}}\left[\frac{(\sigma+1)J}{\sigma^{\frac{2-b}{2}}}\right]^{\frac{N(\sigma_c-2)}{(2-b)[N(\sigma_c-2)-2\sigma_c]}}
	\end{align}
	so that $V$ is a solution of \eqref{elptcpc} and
	\begin{align}\label{Vc}
	\|V\|_{L^{\sigma_c}}^{\sigma_c}=\frac{\beta^N}{\alpha^{\sigma_c}}\|g^*\|_{L^{\sigma_c}}^{\sigma_c}=\frac{\beta^N}{\alpha^{\sigma_c}}=[(\sigma+1)J]^{\frac{N}{2-b}}.
	\end{align}
	Note that, this implies
	\begin{align}\label{J}
	J=\frac{\|V\|_{L^{\sigma_c}}^{2\sigma}}{\sigma+1}
	\end{align}
	and by the definition of $J$, we have for all $f\in\dot H^1(\Real^N)\cap L^{\sigma_c}(\Real^N)$
	\begin{align}
	\frac{\|V\|_{L^{\sigma_c}}^{2\sigma}}{\sigma+1}=J\leq \frac{\|\nabla f\|_{L^2}^2\|f\|_{L^{\sigma_c}}^{2\sigma}}{ \left\||\cdot|^{\frac{-b}{2\sigma+2}}f\right\|_{L^{2\sigma+2}}^{2\sigma+2}},
	\end{align}
	which implies \eqref{GNsc}. 
	On the other hand, by Lemma \ref{pohozaev}, if $\phi$ is a solution of the elliptic equation \eqref{groundstate}, then $J(\phi)$ is given by
	\begin{align}\label{jellptic}
	J(\phi)=\frac{\|\nabla \phi\|_{L^2}^2\|\phi\|_{L^{\sigma_c}}^{2\sigma}}{\left\||\cdot |^{-\frac{b}{2\sigma+2}}\phi\right\|_{L^{2\sigma+2}}^{2\sigma+2}}=\frac{\frac{1}{\sigma}\|\phi\|_{L^{\sigma_c}}^{2\sigma+\sigma_c}}{\frac{\sigma+1}{\sigma}\|\phi\|_{L^{\sigma_c}}^{\sigma_c}}=\frac{\|\phi\|_{L^{\sigma_c}}^{2\sigma}}{\sigma+1}.
	\end{align}
	Therefore, since $J(V)=\displaystyle\min_{f\in \dot H^1\cap L^{\sigma_c}, f\neq 0}J(f)$, we have that $V$ is a solution of \eqref{elptcpc} with minimal $L^{\sigma_c}(\Real^N)$-norm.
\end{proof}
As a consequence of the Theorem \ref{GNU}, we obtain the following global well-posedness result.
\begin{proof}[Proof of Theorem \ref{global}]
	By the Gagliardo-Nirenberg inequality in Theorem \ref{GNU} and energy conservation, we get
	\begin{align}
	E(u_0)=E(u(t))=&\frac{1}{2}\|\nabla u(t)\|_{L^2}^2-\frac{1}{2\sigma+2}\left\||\cdot|^{-\frac{b}{2\sigma+2}}u(t)\right\|_{L^{2\sigma+2}}^{2\sigma+2}\\ \geq& \frac{1}{2}\|\nabla u(t)\|_{L^2}^2\left(1-\frac{\|u(t)\|_{L^{\sigma_c}}^{2\sigma}}{\|V\|_{L^{\sigma_c}}^{2\sigma}}\right).
	\end{align}
	Since $\dot H^{s_c}(\mathbb{R}^N)\subset L^{\sigma_c}(\mathbb{R}^N)$, from the assumption $\sup_t\|u(t)\|_{\dot H^{s_c}}<\|V\|_{L^{\sigma_c}}$, we deduce that $\|\nabla u(t)\|_{L^2}$ is bounded for all $t\in [0,T^*)$. On the other hand, since $\|u(t)\|_{\dot H^{s_c}\cap \dot H^1}=\|u(t)\|_{\dot H^{s_c}}+\|u(t)\|_{\dot H^{1}}$, by the blow-up alternative (see Corollary \ref{Blowalt}) if $T^*<\infty$, then $\lim_{t\uparrow T^*}\|\nabla u(t)\|_{L^2}=\infty$, which is a contradiction. Consequently, $u$ is a global solution for \eqref{PVI}.
\end{proof}

\begin{rem}\label{RemGWP} 
It is possible to replace the assumption $\sup_{t\in [0,T^*)}\|u(t)\|_{\dot H^{s_c}}<\|V\|_{L^{\sigma_c}}$ in the statement of Theorem \ref{global} by $\sup_{t\in [0,T^*)}\|u(t)\|_{L^{\sigma_c}}<\|V\|_{L^{\sigma_c}}$ and \eqref{condbounded}. Indeed, from $\sup_{t\in [0,T^*)}\|u(t)\|_{L^{\sigma_c}}<\|V\|_{L^{\sigma_c}}$ we deduce that $\|\nabla u(t)\|_{L^2}$ is bounded for all $t\in [0,T^*)$. Moreover, the assumption \eqref{condbounded} and Corollary \ref{Blowalt} implies $\lim_{t\uparrow T^*}\|\nabla u(t)\|_{L^2}=\infty$, achieving the contradiction.
\end{rem}

\section{Critical norm concentration}\label{CNC}

In this section, we prove our main result about $L^{\sigma_c}$-norm concentration in the intercritical regime for finite time blow-up solution. 
\begin{proof}[Proof of Theorem \ref{concentration}]
Let $\{t_n\}_{n\in \mathbb{N}}$ be an arbitrary time sequence such that $t_n\uparrow T^*$, as $n\to\infty$. Define
\begin{align}
\rho_{n}=\left(\frac{1}{\|\nabla u(t_n)\|_{L^2}}\right)^{\frac{1}{1-s_c}}\,\,\,\,\,\,\,\,\mbox{ and }\,\,\,\,\,\,\,v_n(x)=\rho_n^{\frac{2-b}{2\sigma}}u(\rho_n x,t_n).
\end{align}
In this case, for all $n\in\mathbb{N}$, we get
$$
\|v_n\|_{\dot H^{s_c}}=\|u(t_n)\|_{\dot H^{s_c}}<\infty,
$$
$$
\|\nabla v_n\|_{L^2}^2=\int |\nabla v_n|^2\,dx=\rho_n^{\frac{2(2-b)}{2\sigma}+2-N}\|\nabla u(t_n)\|_{L^2}^{2}=\rho_n^{2(1-s_c)}\|\nabla u(t_n)\|_{L^2}^{2}=1
$$
and
$$
E(v_n)=\frac{1}{2}\|\nabla v_n\|_{L^2}^{2}-\frac{1}{2\sigma+2}\int |x|^{-b}|v_n|^{2\sigma+2}\,dx=\rho_n^{2(1-s_c)}E(u_0).
$$
So $\{v_n\}_{n\in \mathbb{N}}$ is a bounded sequence in $\dot H^{s_c}(\mathbb{R}^N)\cap \dot H^{1}(\mathbb{R}^N)$ and since $\rho_n\to 0$ when $n\to \infty$, we have  
$$\displaystyle\lim_{n\to \infty}E(v_n)=0.$$ 
Thus, there exists $v^*\in \dot H^{s_c}(\mathbb{R}^N)\cap \dot H^{1}(\mathbb{R}^N)$ such that, up to a subsequence, $v_n\rightharpoonup v^*$ in $\dot H^{s_c}(\mathbb{R}^N)\cap \dot H^{1}(\mathbb{R}^N)$, as $n\to\infty$. Moreover since $\dot H^{s_c}(\mathbb{R}^N)\subset L^{\sigma_c}(\mathbb{R}^N)$ we have
\begin{align}\label{liminffrac}
\|\nabla v^*\|_{L^2}\leq \liminf_{n\to \infty}\|\nabla v_n\|_{L^2}\,\,\,\,\mbox{ and }\,\,\,\,\|v^*\|_{L^{\sigma_c}}\leq \liminf_{n\to \infty}\|v_n\|_{L^{\sigma_c }}.
\end{align}
In addition, by Proposition \ref{WSC}
\begin{align}\label{convgweak}
\lim_{n\to\infty}\left\||\cdot|^{-\frac{b}{2\sigma+2}}v_n\right\|_{L^{2\sigma+2}}^{2\sigma+2}=\left\||\cdot|^{-\frac{b}{2\sigma+2}}v^* \right\|_{L^{2\sigma+2}}^{2\sigma+2}.
\end{align}
Hence, from the sharp Gagliardo-Nirenberg inequality in Theorem \ref{GNU}, \eqref{liminffrac} and \eqref{convgweak}, we get
\begin{align}
0=\liminf_{n\to\infty}E(v_n)\geq \frac{1}{2}\|\nabla v^*\|_{L^2}^{2}\left(1-\frac{\|v^*\|_{L^{\sigma_c}}^{2\sigma}}{\|V\|_{L^{\sigma_c}}}\right),
\end{align}
which implies $\|v^*\|_{L^{\sigma_c}}\geq \|V\|_{L^{\sigma_c}}$. Consequently, for every $R>0$,
\begin{align}
\liminf_{n\to\infty}\int_{|y|\leq\rho_nR}|u(y,t_n)|^{\sigma_c}\,dy=&\liminf_{n\to\infty}\int_{|x|\leq R}\rho_n^{\frac{\sigma_c(2-b)}{2\sigma}}|u(\rho_n x,t_n)|^{\sigma_c}\,dx\\=&\liminf_{n\to\infty}\int_{|x|\leq R}|v_n(x)|^{\sigma_c}\,dx\geq \int_{|x|\leq R}|v^*|^{\sigma_c}\,dx,
\end{align}
where we have used the weak convergence $v_n\rightharpoonup v^*$ in $L^{\sigma_c}(\Real^N)$ in the last inequality.
Using the assumption $\lambda(t_n)/\rho_n\to \infty$ as $n\to\infty$, we obtain
\begin{align}
\liminf_{n\to\infty}\int_{|x|\leq \lambda(t_n)}|u(x,t_n)|^{\sigma_c}\,dx\geq \int_{|x|\leq R}|v^*|^{\sigma_c}\,dx,
\end{align}
for all $R>0$, which gives
\begin{align}
\liminf_{n\to\infty}\int_{|x|\leq \lambda(t_n)}|u(x,t_n)|^{\sigma_c}\,dx\geq \|V\|^{\sigma_c}_{L^{\sigma_c}}.
\end{align}
Since $\{t_n\}_{n\in \mathbb{N}}$ is arbitrary, we deduce that
\begin{align}
\liminf_{t\to T^*}\int_{|x|\leq \lambda(t)}|u(x,t)|^{\sigma_c}\,dx\geq \|V\|^{\sigma_c}_{L^{\sigma_c}},
\end{align}
which completes the proof.
\end{proof} 
\section{A remark on another concentration result for the INLS equation}\label{ACR}
Here, we present some results which can be obtained in a  similar manner to those established in the two previous sections. First, Proposition \ref{WSC} allows us to obtain an alternative Gagliardo-Niremberg type inequality.
\begin{thm}\label{GNUsc}
	Let $N\geq 1$, $0<b<2$, $\frac{2-b}{N}<\sigma<\frac{2-b}{N-2}$ ($\frac{2-b}{N}<\sigma<\infty$, if $N=1,2$) and $s_c=\frac{N}{2}-\frac{2-b}{2\sigma}$, then the following Gagliardo-Nirenberg inequality holds for all $f\in \dot H^{s_c}\cap\dot H^1$
	\begin{align}\label{GNisc}
	\int_{\Real^N} |x|^{-b} |f(x)|^{2\sigma+2}\,dx\leq \frac{\sigma+1}{\|W\|_{\dot H^{s_c}}^{2\sigma}}\|\nabla f\|_{L^2}^2\|f\|_{\dot H^{s_c}}^{2\sigma},
	\end{align}
	where $W$ is a solution to the elliptic equation, 
	\begin{align}\label{elptcsc1}
	\Delta W+|x|^{-b}|W|^{2\sigma}W-(-\Delta)^{s_c}W=0
	\end{align}
with minimal $\dot H^{s_c}$-norm.
\end{thm}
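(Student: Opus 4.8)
The plan is to mirror the variational argument used for Theorem \ref{GNU}, replacing the $L^{\sigma_c}$-norm by the $\dot H^{s_c}$-norm throughout. Concretely, I would introduce the Weinstein-type functional
\[
J(f)=\frac{\|\nabla f\|_{L^2}^2\|f\|_{\dot H^{s_c}}^{2\sigma}}{\left\||\cdot|^{-\frac{b}{2\sigma+2}}f\right\|_{L^{2\sigma+2}}^{2\sigma+2}},
\]
set $J=\inf_{f\in \dot H^{s_c}\cap \dot H^1,\,f\neq 0}J(f)$, and observe that $J>0$ as a consequence of \eqref{GNcrit} together with the embedding $\dot H^{s_c}(\Real^N)\subset L^{\sigma_c}(\Real^N)$. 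A direct check using $s_c=\frac{N}{2}-\frac{2-b}{2\sigma}$ shows that $J$ is invariant under the two-parameter scaling $f\mapsto \mu f(\theta\,\cdot)$, which is precisely the freedom needed to normalize a minimizing sequence.

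First I would take a minimizing sequence $\{f_n\}$ and rescale it to $g_n(x)=\mu_n f_n(\theta_n x)$ with $\mu_n,\theta_n$ chosen so that $\|g_n\|_{\dot H^{s_c}}=\|\nabla g_n\|_{L^2}=1$; scale invariance guarantees that $\{g_n\}$ is still minimizing. Being bounded in $\dot H^{s_c}\cap \dot H^1$, it admits a weakly convergent subsequence $g_n\rightharpoonup g^*$. The decisive point is that, since $\dot H^{s_c}\subset L^{\sigma_c}$, the sequence is bounded in $\dot H^1\cap L^{\sigma_c}$, so Proposition \ref{WSC} applies verbatim and yields strong convergence of the denominator. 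Combined with weak lower semicontinuity of the homogeneous norms, this forces $g^*\neq 0$ to be a minimizer with $\|g^*\|_{\dot H^{s_c}}=\|\nabla g^*\|_{L^2}=1$.

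Next I would compute the Euler--Lagrange equation $\frac{d}{ds}\big|_{s=0}J(g^*+s\varphi)=0$. The only change relative to Theorem \ref{GNU} is that differentiating $\|g_s\|_{\dot H^{s_c}}^{2\sigma}$ produces the fractional term $2\sigma\,\mathrm{Re}\,\langle(-\Delta)^{s_c}g^*,\varphi\rangle$ in place of the $|g^*|^{\sigma_c-2}g^*$ term, leading to
\[
\Delta g^*+(\sigma+1)J\,|x|^{-b}|g^*|^{2\sigma}g^*=\sigma\,(-\Delta)^{s_c}g^*.
\]
Rescaling $g^*(x)=\alpha W(\beta x)$ with $\alpha,\beta$ chosen to normalize the two coefficients to $1$ produces a solution $W$ of \eqref{elptcsc1}, and the explicit scaling relations give $\|W\|_{\dot H^{s_c}}$ in terms of $J$, hence $J=\|W\|_{\dot H^{s_c}}^{2\sigma}/(\sigma+1)$; together with the definition of $J$ this is exactly \eqref{GNisc}.

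Finally, to identify $W$ as a minimal $\dot H^{s_c}$-norm solution I would establish two scaling identities for \eqref{elptcsc1}: pairing the equation with $W$ gives $\int|x|^{-b}|W|^{2\sigma+2}=\|\nabla W\|_{L^2}^2+\|W\|_{\dot H^{s_c}}^2$, while the Derrick-type identity $\frac{d}{d\lambda}\big|_{\lambda=1}S(W(\lambda\,\cdot))=0$ for the action $S(W)=\frac12\|\nabla W\|_{L^2}^2+\frac12\|W\|_{\dot H^{s_c}}^2-\frac{1}{2\sigma+2}\int|x|^{-b}|W|^{2\sigma+2}$ gives $\frac{N-2}{2}\|\nabla W\|_{L^2}^2+\frac{N-2s_c}{2}\|W\|_{\dot H^{s_c}}^2=\frac{N-b}{2\sigma+2}\int|x|^{-b}|W|^{2\sigma+2}$. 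Using $\sigma(N-2s_c)=2-b$, these two relations combine to $\|W\|_{\dot H^{s_c}}^2=\sigma\|\nabla W\|_{L^2}^2$, whence $J(\phi)=\|\phi\|_{\dot H^{s_c}}^{2\sigma}/(\sigma+1)$ for every solution $\phi$ of \eqref{elptcsc1}; since $J(W)=J$ is the global minimum, $W$ has least $\dot H^{s_c}$-norm. The main obstacle I anticipate is the rigorous justification of this second, fractional, Pohozaev identity: unlike the local computation in Lemma \ref{pohozaev}, the term $\mathrm{Re}\int (-\Delta)^{s_c}W\,(x\cdot\nabla\bar W)$ cannot be integrated by parts directly, so one must either invoke the homogeneity of $(-\Delta)^{s_c}$ under dilations through the Derrick scaling argument (valid once $\lambda\mapsto W(\lambda\,\cdot)$ is checked to be an admissible variation) or a Fourier-side computation.
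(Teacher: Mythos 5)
Your proposal follows essentially the same route as the paper's sketch: the Weinstein functional with $\|f\|_{\dot H^{s_c}}^{2\sigma}$ in the numerator, normalization of a minimizing sequence via the two-parameter scaling, compactness of the denominator through Proposition \ref{WSC} (using $\dot H^{s_c}\subset L^{\sigma_c}$), the Euler--Lagrange equation with the fractional term $\sigma(-\Delta)^{s_c}g^*$, and the rescaling $g^*=\alpha W(\beta\cdot)$ giving $J=\|W\|_{\dot H^{s_c}}^{2\sigma}/(\sigma+1)$. Your final paragraph actually goes beyond the paper, which merely asserts the minimality of $\|W\|_{\dot H^{s_c}}$ in its sketch: your combination of the pairing identity with the Derrick-type identity (correctly yielding $\|W\|_{\dot H^{s_c}}^2=\sigma\|\nabla W\|_{L^2}^2$ and hence $J(\phi)=\|\phi\|_{\dot H^{s_c}}^{2\sigma}/(\sigma+1)$ for every solution) supplies that missing justification, and your remark that the fractional Pohozaev identity is the delicate point is well taken.
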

\begin{proof}[Sketch of the proof of Theorem \ref{GNUsc}]
	\textbf{Step 1.} Define the Weinstein functional 
	\begin{align}
	J(f)=\frac{\|\nabla f\|_{L^2}^{2}\|f\|_{\dot H^{s_c}}^{2\sigma}}{\left\||\cdot|^{-\frac{b}{2\sigma+2}}f\right\|_{L^{2\sigma+2}}^{2\sigma+2}}
	\end{align}
	and the number $$J=\displaystyle\inf_{f\in \dot H^{s_c}\cap\dot H^1, f\neq 0}J(f).$$
	\textbf{Step 2.}  Let $\{f_n\}_{n\in \mathbb{N}}$ be a minimizing sequence in $\dot H^{s_c}(\mathbb{R}^N)\cap \dot H^1(\mathbb{R}^N)$ and consider the sequence $w_n(x)=\mu_nf_n(\theta_nx)$ with
	\begin{align}
	\mu_n=\frac{\|f_n\|_{\dot H^{s_c}}^{\frac{N-2}{2(1-s_c)}}}{\|\nabla f_n\|_{L^2}^{\frac{2-b}{2\sigma(1-s_c)}}}\,\,\,\,\mbox{ and }\,\,\,\,\theta_n=\left(\frac{\|f_n\|_{\dot H^{s_c}}}{\|\nabla f_n\|_{L^{2}}}\right)^{\frac{1}{1-s_c}}.
	\end{align}
	Thus, $\|w_n\|_{\dot H^{s_c}}=\|\nabla w_n\|_{L^2}=1$ and $\{w_n\}_{n\in \mathbb{N}}$ is a minimizing sequence in $\dot H^{s_c}(\mathbb{R}^N)\cap \dot H^1(\mathbb{R}^N)$.\\
	\textbf{Step 3.} There exists $w^*\in \dot H^{s_c}(\mathbb{R}^N)\cap \dot H^1(\mathbb{R}^N)$ such that $w_n\rightharpoonup w^*$ in $\dot H^{s_c}(\mathbb{R}^N)\cap \dot H^1(\mathbb{R}^N)$, as $n\to \infty$. Moreover,
	$$\|w^*\|_{\dot H^{s_c}}\leq 1\,\,\,\,\mbox{ and }\,\,\,\,\|\nabla w^*\|_{L^2}\leq 1,$$
	and from Proposition \ref{WSC} it follows that $J(w^*)=J.$\\
	\textbf{Step 4.} Since $w^*$ is a solution for Euler-Lagrange equation
	$$\frac{d}{ds}\Big|_{s=0}J(w_s)=0,$$
	where $w_s=w^*+s\varphi$ for $\varphi\in \mathcal S(\Real^N)$ we deduce that
	$$\langle -\Delta w^*-(\sigma+1)J|x|^{-b}|w^*|^{2\sigma}w^*+\sigma(-\Delta)^{s_c}w^*,\varphi\rangle=0,$$
	where $(-\Delta)^{s_c}w^*$ satisfies $\langle (-\Delta)^{s_c}w^*,\varphi\rangle=\langle D^{s_c}w^*,D^{s_c}\varphi\rangle$ for all $\varphi\in \mathcal S(\Real^N)$.\\
	\textbf{Step 5.} If $W$ is given by $w^*(x)=\alpha W(\beta x)$ with
	\begin{align}
	\alpha=\left[\frac{\sigma^{\frac{2-b}{2(1-s_c)}}}{(\sigma+1)J}\right]^{\frac{1}{2\sigma}}\,\,\,\,\,\mbox{ and }\,\,\,\,\,\beta =\sigma^{\frac{1}{2(1-s_c)}},
	\end{align}
	then 
	$$\|W\|_{\dot H^{s_c}}=\frac{\beta^{\frac{2-b}{2\sigma}}}{\alpha}\|w^*\|_{\dot H^{s_c}}=[(\sigma+1)J]^{\frac{1}{2\sigma}}$$
	and $W$ is a solution (in the weak sense) to the elliptic equation
	\begin{align}
	\Delta W+|x|^{-b}|W|^{2\sigma}W=(-\Delta)^{s_c}W,
	\end{align}
	with minimal $\dot H^{s_c}$-norm.\\
	\textbf{Step 6.} Therefore, for all $f\in \dot H^{s_c}(\mathbb{R}^N)\cap \dot H^1(\mathbb{R}^N)$
	$$\int|x|^{-b}|f(x)|^{2\sigma+2}\leq \frac{1}{J}\|\nabla f\|_{L^2}^2\|u\|_{\dot H^{s_c}}^{2\sigma}=\frac{\sigma+1}{\|W\|_{\dot H^{s_c}}}\|\nabla f\|_{L^2}^2\|f\|_{\dot H^{s_c}}^{2\sigma}$$
	which completes the proof.
\end{proof}
Next, with this new Gagliardo-Nirenberg type inequality in hand we can prove a variant global well-posedness and concentration results. We will omit the proofs as they are completely analogous to the proofs of Theorem \ref{global} and Theorem \ref{concentration}.
\begin{thm}\label{global2}
	Let $N\geq3$, $0<b<\min\{\frac{N}{2},2\}$, $\frac{2-b}{N}<\sigma<\frac{2-b}{N-2}$, $s_c=\frac{N}{2}-\frac{2-b}{2\sigma}$ and $\sigma_c=\frac{2N\sigma}{2-b}$. For $u_0\in \dot H^{s_c}(\mathbb{R}^N)\cap \dot H^1(\mathbb{R}^N)$, let $u(t)$ be the corresponding solution to \eqref{PVI} given by Theorem \ref{LWP} and $T^*>0$ the maximal time of existence. Suppose that $\sup_{t\in [0,T^*)}\|u(t)\|_{\dot H^{s_c}}<\|W\|_{\dot H^{s_c}}$, where $W$ is a solution of the elliptic equation \eqref{elptcsc1} with minimal $\dot H^{s_c}$-norm. Then $u(t)$ exists globally in the time.
\end{thm}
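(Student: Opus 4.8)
The plan is to mirror verbatim the proof of Theorem \ref{global}, substituting the sharp Gagliardo-Nirenberg inequality \eqref{GNisc} of Theorem \ref{GNUsc} for the one used there. The crucial structural point is that \eqref{GNisc} already carries the homogeneous norm $\|f\|_{\dot H^{s_c}}$ on its right-hand side, which is precisely the quantity controlled by the hypothesis $\sup_{t}\|u(t)\|_{\dot H^{s_c}}<\|W\|_{\dot H^{s_c}}$. Consequently, in contrast to the proof of Theorem \ref{global}, I would \emph{not} need the Sobolev embedding $\dot H^{s_c}(\mathbb{R}^N)\subset L^{\sigma_c}(\mathbb{R}^N)$ to pass from $\|u(t)\|_{L^{\sigma_c}}$ to $\|u(t)\|_{\dot H^{s_c}}$; the coercivity estimate comes out directly.

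First I would combine the conservation of energy \eqref{energy} with \eqref{GNisc} applied to $f=u(t)$, using the identity $\frac{\sigma+1}{2\sigma+2}=\frac{1}{2}$, to obtain for every $t\in[0,T^*)$
\begin{align*}
E[u_0]=E[u(t)]&=\frac{1}{2}\|\nabla u(t)\|_{L^2}^2-\frac{1}{2\sigma+2}\int |x|^{-b}|u(t)|^{2\sigma+2}\,dx\\
&\geq \frac{1}{2}\|\nabla u(t)\|_{L^2}^2\left(1-\frac{\|u(t)\|_{\dot H^{s_c}}^{2\sigma}}{\|W\|_{\dot H^{s_c}}^{2\sigma}}\right).
\end{align*}
By the standing assumption there is a constant $\delta_0>0$ with $\|u(t)\|_{\dot H^{s_c}}^{2\sigma}/\|W\|_{\dot H^{s_c}}^{2\sigma}\leq 1-\delta_0$ uniformly in $t$, so the parenthetical factor is bounded below by $\delta_0$ and hence $\|\nabla u(t)\|_{L^2}^2\leq 2\delta_0^{-1}E[u_0]$ remains bounded on the whole interval of existence.

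Finally I would close the argument by contradiction through the blow-up alternative of Corollary \ref{Blowalt}. Writing $\|u(t)\|_{\dot H^{s_c}\cap \dot H^1}=\|u(t)\|_{\dot H^{s_c}}+\|u(t)\|_{\dot H^1}$ and noting that the $\dot H^{s_c}$-part is bounded by hypothesis, a finite maximal time $T^*<\infty$ would force $\lim_{t\uparrow T^*}\|\nabla u(t)\|_{L^2}=\infty$, contradicting the uniform bound just derived; therefore $T^*=\infty$. Granting Theorem \ref{GNUsc}, there is no genuine obstacle in this argument, and indeed the author declares it analogous to Theorem \ref{global}. The only step that warrants care is the passage from a merely finite energy to a \emph{uniform} bound on $\|\nabla u(t)\|_{L^2}$: this is exactly where the \emph{strict} inequality $\sup_{t}\|u(t)\|_{\dot H^{s_c}}<\|W\|_{\dot H^{s_c}}$ is essential, since it is what produces the fixed gap $\delta_0>0$.
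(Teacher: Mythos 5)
Your proposal is correct and follows essentially the same route the paper intends: the authors omit the proof of Theorem \ref{global2} precisely because it is the argument of Theorem \ref{global} with \eqref{GNisc} in place of \eqref{GNsc}, followed by the blow-up alternative of Corollary \ref{Blowalt}. Your observation that the Sobolev embedding $\dot H^{s_c}(\mathbb{R}^N)\subset L^{\sigma_c}(\mathbb{R}^N)$ is no longer needed (since \eqref{GNisc} already carries the $\dot H^{s_c}$-norm) and your explicit extraction of the gap $\delta_0>0$ from the strict inequality are accurate refinements of the same argument, not a different approach.
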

From Proposition \ref{WSC} and Theorem \ref{GNUsc}, we also obtain the following $\dot H^{s_c}$-norm concentration.
\begin{thm}\label{concentrationsc} Let $N\geq3$, $0<b<\min\{\frac{N}{2},2\}$ and $\frac{2-b}{N}<\sigma<\frac{2-b}{N-2}$. For $u_0\in \dot{H}^{s_c}\cap\dot{H}^1$, let $u(t)$ be the corresponding solution to \eqref{PVI} given by Theorem \ref{LWP} and assume that it blows up in finite time $T^*>0$ satisfying \eqref{condbounded}.
	If $\lambda (t)>0$ is such that
	\begin{align}
	\lambda(t)\|\nabla u(t)\|_{L^2}^{\frac{1}{1-s_c}}\to  \infty, \,\,\textit{ as }\, t\to T^*,
	\end{align}
	then,
	\begin{align}
	\liminf_{t\to T^*} \int_{|x|\leq \lambda(t)}|D^{s_c}u(x,t)|^{2}\,dx\geq \|W\|^{2}_{\dot H^{s_c}}
	\end{align}
	where $W$ is a minimal $\dot H^{s_c}$-norm solution to elliptic equation \eqref{elptcsc1}.
\end{thm}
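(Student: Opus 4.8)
The plan is to adapt the proof of Theorem \ref{concentration} almost verbatim, replacing the sharp inequality \eqref{GNsc} by its $\dot H^{s_c}$-counterpart \eqref{GNisc}, and the role played there by the $L^{\sigma_c}$-norm by the $\dot H^{s_c}$-norm. First I would fix an arbitrary sequence $t_n\uparrow T^*$ and introduce the rescaled functions
\[
\rho_n=\left(\frac{1}{\|\nabla u(t_n)\|_{L^2}}\right)^{\frac{1}{1-s_c}},\qquad v_n(x)=\rho_n^{\frac{2-b}{2\sigma}}u(\rho_n x,t_n),
\]
exactly as before. Scaling invariance of the $\dot H^{s_c}$-norm gives $\|v_n\|_{\dot H^{s_c}}=\|u(t_n)\|_{\dot H^{s_c}}$, which is bounded by \eqref{condbounded}; the choice of $\rho_n$ normalizes $\|\nabla v_n\|_{L^2}=1$ and forces $E(v_n)=\rho_n^{2(1-s_c)}E(u_0)\to 0$, since Corollary \ref{Blowalt} together with \eqref{condbounded} yields $\|\nabla u(t_n)\|_{L^2}\to\infty$, hence $\rho_n\to 0$. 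Thus $\{v_n\}$ is bounded in $\dot H^{s_c}(\Real^N)\cap\dot H^1(\Real^N)$, and up to a subsequence $v_n\rightharpoonup v^*$ weakly in this space.

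Next, Proposition \ref{WSC} gives $\int|x|^{-b}|v_n|^{2\sigma+2}\,dx\to\int|x|^{-b}|v^*|^{2\sigma+2}\,dx$; combined with $\|\nabla v_n\|_{L^2}=1$ and $E(v_n)\to 0$, this identifies the limit as $\int|x|^{-b}|v^*|^{2\sigma+2}\,dx=\sigma+1\neq 0$, so in particular $\|\nabla v^*\|_{L^2}>0$. Using the weak lower semicontinuity $\|\nabla v^*\|_{L^2}\le\liminf_n\|\nabla v_n\|_{L^2}$ and $\|v^*\|_{\dot H^{s_c}}\le\liminf_n\|v_n\|_{\dot H^{s_c}}$, the sharp inequality \eqref{GNisc} of Theorem \ref{GNUsc} yields
\[
0=\liminf_{n\to\infty}E(v_n)\ge \frac12\|\nabla v^*\|_{L^2}^2\left(1-\frac{\|v^*\|_{\dot H^{s_c}}^{2\sigma}}{\|W\|_{\dot H^{s_c}}^{2\sigma}}\right),
\]
and since $\|\nabla v^*\|_{L^2}>0$ this forces $\|v^*\|_{\dot H^{s_c}}\ge\|W\|_{\dot H^{s_c}}$.

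The step that genuinely differs from Theorem \ref{concentration}, and which I expect to be the main obstacle, is the localization of the critical mass, because $D^{s_c}$ is nonlocal and does not enjoy the pointwise localization that made the $L^{\sigma_c}$-case trivial. The hard part will be to convert $\int_{|x|\le R}|D^{s_c}v_n|^2\,dx$ into an integral of $|D^{s_c}u(\cdot,t_n)|^2$ over a ball. I would resolve this using the homogeneity of the Riesz potential under dilations: from $v_n(x)=\rho_n^{(2-b)/(2\sigma)}u(\rho_n x,t_n)$ one obtains the pointwise identity $D^{s_c}v_n(x)=\rho_n^{\frac{2-b}{2\sigma}+s_c}(D^{s_c}u)(\rho_n x,t_n)$, and since $\frac{2-b}{\sigma}+2s_c-N=0$ by the definition of $s_c$, a change of variables gives, for every $R>0$,
\[
\int_{|x|\le R}|D^{s_c}v_n(x)|^2\,dx=\int_{|y|\le\rho_n R}|D^{s_c}u(y,t_n)|^2\,dy.
\]
Since $v_n\rightharpoonup v^*$ in $\dot H^{s_c}$ implies $D^{s_c}v_n\rightharpoonup D^{s_c}v^*$ in $L^2(\Real^N)$, restricting to the ball $B(0,R)$ and applying weak lower semicontinuity of the $L^2$-norm gives $\liminf_n\int_{|x|\le R}|D^{s_c}v_n|^2\,dx\ge\int_{|x|\le R}|D^{s_c}v^*|^2\,dx$. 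Finally, the hypothesis $\lambda(t_n)/\rho_n\to\infty$ ensures $\lambda(t_n)\ge\rho_n R$ for $n$ large, whence
\[
\liminf_{n\to\infty}\int_{|x|\le\lambda(t_n)}|D^{s_c}u(x,t_n)|^2\,dx\ge\int_{|x|\le R}|D^{s_c}v^*|^2\,dx.
\]
Letting $R\to\infty$ and invoking $\|v^*\|_{\dot H^{s_c}}\ge\|W\|_{\dot H^{s_c}}$, together with the arbitrariness of $\{t_n\}$, produces the claimed $\liminf_{t\to T^*}$ bound and completes the proof.
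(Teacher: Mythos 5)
Your proposal is correct and is precisely the argument the paper intends: the authors omit the proof of Theorem \ref{concentrationsc}, stating it is completely analogous to that of Theorem \ref{concentration}, and your write-up is exactly that analogue with \eqref{GNsc} replaced by \eqref{GNisc}. You also correctly supply the one genuinely new ingredient that the nonlocality of $D^{s_c}$ demands, namely the exact dilation identity $\int_{|x|\le R}|D^{s_c}v_n|^2\,dx=\int_{|y|\le\rho_nR}|D^{s_c}u(y,t_n)|^2\,dy$ (valid since $\tfrac{2-b}{\sigma}+2s_c-N=0$) combined with weak lower semicontinuity of the $L^2$-norm on balls, and your verification that $\|\nabla v^*\|_{L^2}>0$ via $\int|x|^{-b}|v^*|^{2\sigma+2}=\sigma+1$ fills a step the paper leaves implicit.
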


\vspace{0.5cm}
\noindent 
\textbf{Acknowledgments.} M.C. was partially supported by Coordena\c{c}\~ao de Aperfei\c{c}oamento de Pessoal de N\'ivel Superior - CAPES. L.G.F. was partially supported by Coordena\c{c}\~ao de Aperfei\c{c}oamento de Pessoal de N\'ivel Superior - CAPES, Conselho Nacional de Desenvolvimento Cient\'ifico e Tecnol\'ogico - CNPq and Funda\c{c}\~ao de Amparo a Pesquisa do Estado de Minas Gerais - Fapemig/Brazil. 

\appendix
\section{Appendix}\label{appA}
Let $N\geq3$, $0<b<2$, $\frac{2-b}{N}<\sigma<\frac{2-b}{N-2}$ and $s_c=\frac{N}{2}-\frac{2-b}{2\sigma}$. Given $(q,p)$ $L^2$-admissible and $(a,r)$ $\dot H^{s_c}$-admissible, we define the space (recall definition \eqref{StrNorm}) 
$$
X= \left( \bigcap_{(q,p)\in \mathcal{A}_0}L^q\left([-T,T];\dot{H}^{s_c,p}\cap \dot{H}^{1,p}\right)\right)\bigcap \left( \bigcap_{(a,r)\in \mathcal{A}_{s_c}}L^a\left([-T,T]; L^r \right)\right)
$$
equipped with the norm
$$
\|u\|_T=\|\nabla u\|_{S\left(L^2;I\right)}+\|D^{s_c} u\|_{S\left(L^2;I\right)}+\|u\|_{S\left(\dot
	H^{s_c};I\right)},
$$
where $I=[-T,T]$.
For $m,T>0$, let $S(m,T)$ be the set
\begin{align}
S(m,T)=\left\{u\in X;\,\|u\|_{T}\leq m\right\}
\end{align}
and $d_T$ the metric in $X$ given by
\begin{align}
d_T(u,v)=\|u-v\|_{S\left(\dot H^{s_c};I\right)}.
\end{align}

We use a similar argument to the one in \citet[Theorem 4.4.1]{cazenave} to show the following result.
\begin{lemma}
	$\left(S(m,T),d_T\right)$ is a complete metric space.
\end{lemma}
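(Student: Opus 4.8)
The plan is to take a $d_T$-Cauchy sequence $\{u_n\}\subset S(m,T)$ and produce a limit $u$ lying in $S(m,T)$ to which $u_n$ converges in $d_T$. The essential difficulty is that $d_T$ controls only the single norm $\|\cdot\|_{S(\dot H^{s_c};I)}$, which is strictly weaker than the norm $\|\cdot\|_T$ defining the ball $S(m,T)$; strong convergence in $d_T$ alone cannot recover the gradient and $D^{s_c}$ pieces of the limit. The resolution is to combine completeness in the weak norm with the uniform bound $\|u_n\|_T\le m$ through weak compactness and lower semicontinuity.

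First I would check that $\left(S(\dot H^{s_c};I),\|\cdot\|_{S(\dot H^{s_c};I)}\right)$ is a Banach space. Since $\|v\|_{S(\dot H^{s_c};I)}=\sup_{(a,r)\in\mathcal{A}_{s_c}}\|v\|_{L^a_I L^r_x}$ is a supremum of norms of the Banach spaces $L^a_I L^r_x$, a sequence Cauchy in this supremum norm is Cauchy in each $L^a_I L^r_x$; the individual limits agree as distributions and furnish a single $u$, and letting the second index tend to infinity in the Cauchy estimate fiber by fiber yields $\|u_n-u\|_{S(\dot H^{s_c};I)}\to 0$. Thus $u_n\to u$ in $d_T$, and it remains only to verify that $u\in S(m,T)$.

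Next I would upgrade $u$ to an element of $X$ with $\|u\|_T\le m$. The bound $\|u_n\|_T\le m$ makes $\{\nabla u_n\}$ and $\{D^{s_c}u_n\}$ bounded in $L^q_I L^p_x$ for every $L^2$-admissible $(q,p)$. For the pairs with $1<q,p<\infty$ these spaces are reflexive, so after passing to a subsequence $\nabla u_n\rightharpoonup g$ and $D^{s_c}u_n\rightharpoonup h$ weakly; for the endpoint pair $(\infty,2)$ I would instead use weak-$\ast$ compactness in $L^\infty_I L^2_x=(L^1_I L^2_x)^\ast$. Since $d_T$-convergence forces $u_n\to u$ in $\mathcal{D}'$, the limits are identified as $g=\nabla u$ and $h=D^{s_c}u$, so that $u\in X$. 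Weak (respectively weak-$\ast$) lower semicontinuity of the $L^q_I L^p_x$ norms gives, for each admissible pair, $\|\nabla u\|_{L^q_I L^p_x}\le\liminf_n\|\nabla u_n\|_{L^q_I L^p_x}\le\liminf_n\|\nabla u_n\|_{S(L^2;I)}$, and taking the supremum over admissible pairs yields $\|\nabla u\|_{S(L^2;I)}\le\liminf_n\|\nabla u_n\|_{S(L^2;I)}$, and likewise for $D^{s_c}u$.

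Finally I would assemble the three pieces. Writing $A_n=\|\nabla u_n\|_{S(L^2;I)}$, $B_n=\|D^{s_c}u_n\|_{S(L^2;I)}$, $C_n=\|u_n\|_{S(\dot H^{s_c};I)}$, strong $d_T$-convergence gives $C_n\to\|u\|_{S(\dot H^{s_c};I)}=:C$, whence $A_n+B_n\le m-C_n\to m-C$; combining this with the semicontinuity bounds of the previous step, $\|\nabla u\|_{S(L^2;I)}+\|D^{s_c}u\|_{S(L^2;I)}\le\liminf_n A_n+\liminf_n B_n\le\liminf_n(A_n+B_n)\le m-C$, so that $\|u\|_T\le m$. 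Hence $u\in S(m,T)$ and $u_n\to u$ in $d_T$, completing the proof. I expect the only genuinely delicate points to be the identification of the weak limits with the correct derivatives of $u$ (which rests on distributional convergence coming for free from $d_T$-convergence) and the correct endpoint handling via weak-$\ast$ compactness; the remainder is the routine reflexivity and lower-semicontinuity machinery.
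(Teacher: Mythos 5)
Your proof is correct and follows essentially the same route as the paper's: the candidate limit is produced by completeness of $S(\dot H^{s_c};I)$, and the bound $\|u\|_T\le m$ is recovered via weak (resp.\ weak-$*$) compactness and lower semicontinuity of the $L^q_IL^p_x$ norms, with the paper invoking Cazenave's Theorem 1.2.5 for the endpoint pair $(\infty,2)$ in place of your explicit weak-$*$ argument. The remaining differences (your explicit verification that $S(\dot H^{s_c};I)$ is Banach and the $A_n,B_n,C_n$ bookkeeping at the end) are cosmetic.
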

\begin{proof}
	
	Since $S(m,T)\subset X$ and $X$ is a Banach space, it suffices to show that $S(m,T)$ with the metric $d_T$ is closed in $S(\dot H^{s_c};I)$. For this end, let $\{u_n\}_{n\in \mathbb{N}}\subset S(m,T)$ and $u\in S(\dot H^{s_c};I)$ such that $d_T(u_n,u)\to0$, as $n\to \infty$.
	This means that $u_n\to u$ in $L^{a}_TL^r_x$ as $n\to\infty$ for all $(a,r)$  $\dot H^{s_c}$-admissible and, in particular,  
	\begin{equation}\label{Conv-unt}
	u_n(t)\to u(t) \,\, \textnormal{in} \,\, L^r_x(\mathbb{R}^N) \,\, \textnormal{as} \,\, n\to\infty \,\, \textnormal{for a.a.} \,\, t\in [-T,T].
	\end{equation}
 Moreover, since
\begin{align}
\|u\|_{L^a_TL^r_x}\leq \|u_n-u\|_{L^a_TL^r_x}+ \|u_n\|_{L^a_TL^r_x},\,\,\,\,\,\mbox{ for all } (a,r) \,\,\dot H^{s_c}\mbox{-admissible},
\end{align} 
and taking $n\to \infty$, we have 
\begin{align}\label{Hsc}
\|u\|_{L^a_TL^r_x}\leq \liminf_{n\to\infty}\|u_n\|_{L^a_TL^r_x}.
\end{align}

Now, let $(q,p)\neq (\infty,2)$ be a $L^2$-admissible pair. Since $\{u_n\}_{n\in \mathbb N}$ is a bounded sequence in $ L^q_T\dot H^{1,p}$, we have
\begin{align}
u\in L^q\left([-T,T]; \dot H^{1,p}\right),
\end{align} 
$u_n\rightharpoonup u$ in $L^q_T\dot H^{1,p}$, as $n\to \infty$ and
	 	\begin{align}\label{qp}
	 	 \|\nabla u\|_{L^q_TL^{p}_x}\leq \liminf_{n\to \infty} \|\nabla u_n\|_{L^q_TL^{p}_x}.
	 	\end{align}
In the same way, we also deduce
\begin{align}\label{qp2}
	 	\|D^{s_c} u\|_{L^q_TL^{p}_x}\leq \liminf_{n\to \infty} \|D^{s_c} u_n\|_{L^q_TL^{p}_x}.
	 	\end{align}

	 	We now consider the pair $(q,p)=(\infty,2)$. From the Sobolev embedding \eqref{SEsc}, we have that $\dot H^{s_c}(\mathbb{R}^N)\cap\dot H^1(\mathbb{R}^N) \subset \dot H^{s_c}(\mathbb{R}^N)\subset  L^{\sigma_c}(\mathbb{R}^N)$, for $\sigma_c=\frac{2N}{N-2s_c}$. Since the pair $(\infty,\frac{2N}{N-2s_c})$ is $ \dot H^{s_c}$- admissible, we have that \eqref{Conv-unt} holds for $r=\sigma_c$. In addition, by the definition of $S(m,T)$, we have that $\{u_n\}_{n\in \mathbb{N}}$ is a bounded sequence in $L^{\infty}\left([-T,T];\dot H^{s_c}\cap \dot H^1\right)$. Thus, by \citet[Theorem 1.2.5]{cazenave}, it follows that 
	 	\begin{align}
	 	u\in L^{\infty}\left([-T,T];\dot H^{s_c}\cap \dot H^1\right)
	 	\end{align}
	 	and
	\begin{align}\label{L2}
	\|\nabla u\|_{L^\infty_TL^2_x}+\|D^{s_c} u\|_{L^\infty_TL^2_x}\leq \liminf_{n\to \infty} \left(\|\nabla u_n\|_{L^\infty_TL^2_x}+\|D^{s_c} u_n\|_{L^\infty_TL^2_x}\right).
	\end{align}
	Hence, using \eqref{Hsc}, \eqref{qp}, \eqref{qp2} and \eqref{L2} for all $(q,p)$ $L^2$-admissible and $(a,r)$ $\dot H^{s_c}$-admissible
	\begin{align}
	\|\nabla u\|_{L^q_TL^p_x}+\|D^{s_c} u\|_{L^q_TL^p_x}+\| u\|_{L^a_TL^r_x}\leq \liminf_{n\to \infty}\left(	\|\nabla u_n\|_{L^q_TL^p_x}+\|D^{s_c} u_n\|_{L^q_TL^p_x}+\| u_n\|_{L^a_TL^r_x}\right)\leq m.
	\end{align}
	Therefore, 
	\begin{align}
	\|\nabla u\|_{S(L^2;[-T,T])}+\|D^{s_c}u\|_{S(L^2;[-T,T])}+\|u\|_{S(\dot H^{s_c};[-T,T])}\leq  m,
	\end{align}
	and thus, $u\in S(m,T)$, which completes the proof.
\end{proof}


\begin{thebibliography}{37}
\providecommand{\natexlab}[1]{#1}
\providecommand{\url}[1]{\texttt{#1}}
\expandafter\ifx\csname urlstyle\endcsname\relax
  \providecommand{\doi}[1]{doi: #1}\else
  \providecommand{\doi}{doi: \begingroup \urlstyle{rm}\Url}\fi

\bibitem[Bergh and L{\"o}fstr{\"o}m(1976)]{BERLOF}
J.~Bergh and J.~L{\"o}fstr{\"o}m.
\newblock \emph{Interpolation spaces. {A}n introduction}.
\newblock Springer-Verlag, Berlin-New York, 1976.
\newblock Grundlehren der Mathematischen Wissenschaften, No. 223.

\bibitem[Bourgain(1999)]{Bo99}
J.~Bourgain.
\newblock \emph{Global solutions of nonlinear {S}chr\"{o}dinger equations},
  volume~46 of \emph{American Mathematical Society Colloquium Publications}.
\newblock American Mathematical Society, Providence, RI, 1999.

\bibitem[Campos(2019)]{campos2019scattering}
L.~Campos.
\newblock Scattering of radial solutions to the inhomogeneous nonlinear
  {S}chr\"odinger equation.
\newblock \emph{arXiv preprint arXiv:1905.02663}, 2019.

\bibitem[Campos and Cardoso(2018)]{campos2018critical}
L.~Campos and M.~Cardoso.
\newblock On the critical norm concentration for the inhomogeneous nonlinear
  {S}chr\" odinger equation.
\newblock \emph{arXiv preprint arXiv:1810.09086}, 2018.

\bibitem[Cazenave(2003)]{cazenave}
T.~Cazenave.
\newblock \emph{Semilinear {S}chr\"{o}dinger equations}, volume~10 of
  \emph{Courant Lecture Notes in Mathematics}.
\newblock New York University, Courant Institute of Mathematical Sciences, New
  York; American Mathematical Society, Providence, RI, 2003.

\bibitem[Cho and Lee(2019)]{cholee2019stability}
Y.~Cho and M.~Lee.
\newblock On the orbital stability of inhomogeneous nonlinear {S}chr\"odinger
  equations with singular potential.
\newblock \emph{Bull. Korean Math. Soc.}, 2019.

\bibitem[Combet and Genoud(2016)]{CG16}
V.~Combet and F.~Genoud.
\newblock Classification of minimal mass blow-up solutions for an {$L^2$}
  critical inhomogeneous {NLS}.
\newblock \emph{J. Evol. Equ.}, 16\penalty0 (2):\penalty0 483--500, 2016.

\bibitem[Demengel and Demengel(2012)]{DEMENGEL}
F.~Demengel and G.~Demengel.
\newblock \emph{Functional spaces for the theory of elliptic partial
  differential equations}.
\newblock Universitext. Springer, London; EDP Sciences, Les Ulis, 2012.

\bibitem[Dinh(2017)]{Boa_Dinh}
V.~Dinh.
\newblock {Scattering theory in a weighted ${L}^{2}$ space for a class of the
  defocusing inhomogeneous nonlinear {S}chr{\"o}dinger equation}.
\newblock \emph{arXiv preprint arXiv:1710.01392}, 2017.

\bibitem[Dinh(2018{\natexlab{a}})]{D18}
V.~D. Dinh.
\newblock Blowup of {$H^1$} solutions for a class of the focusing inhomogeneous
  nonlinear {S}chr\"{o}dinger equation.
\newblock \emph{Nonlinear Anal.}, 174:\penalty0 169--188, 2018{\natexlab{a}}.

\bibitem[Dinh(2018{\natexlab{b}})]{dinh2018study}
V.~D. Dinh.
\newblock A study on blowup solutions to the focusing ${L}^2$-supercritical
  nonlinear fractional {S}chr{\"o}dinger equation.
\newblock \emph{Journal of Mathematical Physics}, 59\penalty0 (7):\penalty0
  071506, 2018{\natexlab{b}}.

\bibitem[Farah(2016)]{Farah_well}
L.~G. Farah.
\newblock {Global well-posedness and blow-up on the energy space for the
  inhomogeneous nonlinear {S}chr{\"o}dinger equation}.
\newblock \emph{Journal of Evolution Equations}, 1\penalty0 (16):\penalty0
  193--208, 2016.

\bibitem[Farah and Guzm{\'a}n(2017)]{farah2017scattering1}
L.~G. Farah and C.~M. Guzm{\'a}n.
\newblock Scattering for the radial 3{D} cubic focusing inhomogeneous nonlinear
  {S}chr{\"o}dinger equation.
\newblock \emph{Journal of Differential Equations}, 262\penalty0 (8):\penalty0
  4175--4231, 2017.

\bibitem[Farah and Guzm{\'a}n(2019)]{farah2019scattering}
L.~G. Farah and C.~M. Guzm{\'a}n.
\newblock Scattering for the radial focusing inhomogeneous nls equation in
  higher dimensions.
\newblock \emph{Bulletin of the Brazilian Mathematical Society, New Series},
  pages 1--64, 2019.
\newblock \doi{https://doi.org/10.1007/s00574-019-00160-1}.

\bibitem[Farah and Pigott(2018)]{farah2016nonlinear}
L.~G. Farah and B.~Pigott.
\newblock Nonlinear profile decomposition and the concentration phenomenon for
  supercritical generalized {K}d{V} equations.
\newblock \emph{Indiana University Mathematics Journal}, 67\penalty0
  (5):\penalty0 1857--1892, 2018.

\bibitem[Fibich(2015)]{Fi15}
G.~Fibich.
\newblock \emph{The nonlinear {S}chr\"{o}dinger equation}, volume 192 of
  \emph{Applied Mathematical Sciences}.
\newblock Springer, Cham, 2015.
\newblock Singular solutions and optical collapse.

\bibitem[Genoud(2012)]{genoud2012critical}
F.~Genoud.
\newblock An inhomogeneous, {$L^2$}-critical, nonlinear {S}chr\"odinger
  equation.
\newblock \emph{Journal for Analysis and its Applications}, 31:\penalty0
  283--290, 2012.

\bibitem[Genoud and Stuart(2008)]{g_8}
F.~Genoud and C.~Stuart.
\newblock {S}chr\"{o}dinger equations with a spatially decaying nonlinearity:
  existence and stability of standing waves.
\newblock \emph{Discrete and Continuous Dynamical Systems}, 21\penalty0
  (1):\penalty0 137, 2008.

\bibitem[Gill(2000)]{Gill}
T.~S. Gill.
\newblock Optical guiding of laser beam in nonuniform plasma.
\newblock \emph{Pramana}, 55\penalty0 (5-6):\penalty0 835--842, 2000.

\bibitem[Guevara(2014)]{GUEVARA}
C.~D. Guevara.
\newblock Global behavior of finite energy solutions to the {$d$}-dimensional
  focusing nonlinear {S}chr\"odinger equation.
\newblock \emph{Appl. Math. Res. Express. AMRX}, 2014\penalty0 (2):\penalty0
  177--243, 2014.

\bibitem[Guo(2013)]{guo2013note}
Q.~Guo.
\newblock A note on concentration for blowup solutions to supercritical
  {S}chr{\"o}dinger equations.
\newblock \emph{Proceedings of the American Mathematical Society}, 141\penalty0
  (12):\penalty0 4215--4227, 2013.

\bibitem[{Guzm{\'a}n}(2017)]{Boa}
C.~{Guzm{\'a}n}.
\newblock On well posedness for the inhomogeneous nonlinear {S}chr\"{o}dinger
  equation.
\newblock \emph{Nonlinear Anal. Real World Appl.}, 37:\penalty0 249--286, 2017.

\bibitem[Hmidi and Keraani(2005)]{HM}
T.~Hmidi and S.~Keraani.
\newblock {Blowup theory for the critical nonlinear {S}chr{\"o}dinger equations
  revisited}.
\newblock \emph{International Mathematics Research Notices}, 2005\penalty0
  (46):\penalty0 2815--2828, 2005.

\bibitem[Holmer and Roudenko(2007)]{holmer2007blow}
J.~Holmer and S.~Roudenko.
\newblock {On blow-up solutions to the 3{D} cubic nonlinear {S}chr{\"o}dinger
  equation}.
\newblock \emph{Applied Mathematics Research eXpress}, 2007\penalty0
  (1):\penalty0 Art. ID abm004, 31, 2007.

\bibitem[Holmer and Roudenko(2008)]{HRasharp}
J.~Holmer and S.~Roudenko.
\newblock A sharp condition for scattering of the radial 3{D} cubic nonlinear
  {S}chr{\"o}dinger equation.
\newblock \emph{Communications in Mathematical Physics}, 282\penalty0
  (2):\penalty0 42135--467, 2008.

\bibitem[Kato(1994)]{KATO2}
T.~Kato.
\newblock An {$L^{q,r}$}-theory for nonlinear {S}chr\"{o}dinger equations.
\newblock In \emph{Spectral and scattering theory and applications}, volume~23
  of \emph{Adv. Stud. Pure Math.}, pages 223--238. Math. Soc. Japan, Tokyo,
  1994.

\bibitem[Linares and Ponce(2015)]{LiPo15}
F.~Linares and G.~Ponce.
\newblock \emph{Introduction to nonlinear dispersive equations}.
\newblock Universitext. Springer, New York, second edition, 2015.

\bibitem[Liu and Tripathi(1994)]{LIu}
C.~Liu and V.~Tripathi.
\newblock Laser guiding in an axially nonuniform plasma channel.
\newblock \emph{Physics of plasmas}, 1\penalty0 (9):\penalty0 3100--3103, 1994.

\bibitem[Merle and Raph\"ael(2008)]{MR_Bsc}
F.~Merle and P.~Raph\"ael.
\newblock Blow up of the critical norm for some radial ${L}^2$ super critical
  nonlinear {S}chr\"odinger equations.
\newblock \emph{American Journal of Mathematics}, 130:\penalty0 945--978, 2008.

\bibitem[Merle and Tsutsumi(1990)]{MT}
F.~Merle and Y.~Tsutsumi.
\newblock {$L^2$} concentration of blow-up solutions for the nonlinear
  {S}chr\"{o}dinger equation with critical power nonlinearity.
\newblock \emph{J. Differential Equations}, 84\penalty0 (2):\penalty0 205--214,
  1990.

\bibitem[Miao et~al.(2019)Miao, Murphy, and Zheng]{Murphy20}
C.~Miao, J.~Murphy, and J.~Zheng.
\newblock {Scattering for the non-radial inhomogeneous {NLS}}.
\newblock \emph{arXiv preprint arXiv:1912.01318}, 2019.

\bibitem[Nirenberg(1959)]{Nirenberg}
L.~Nirenberg.
\newblock {On Elliptic Partial Differential Equations}.
\newblock \emph{Annali della Scuola Normale Superiore di Pisa - Classe di
  Scienze}, 13\penalty0 (2):\penalty0 115--162, 1959.

\bibitem[Stein and Weiss(1958)]{stein}
E.~Stein and G.~Weiss.
\newblock {Fractional integrals on n-dimensional Euclidean space}.
\newblock \emph{Journal of Mathematics and Mechanics}, pages 503--514, 1958.

\bibitem[Sulem and Sulem(1999)]{Sulem}
C.~Sulem and P.-L. Sulem.
\newblock \emph{The nonlinear {S}chr\"{o}dinger equation: Self-focusing and
  wave collapse}, volume 139 of \emph{Applied Mathematical Sciences}.
\newblock Springer-Verlag, New York, 1999.

\bibitem[Tao(2006)]{TaoBook}
T.~Tao.
\newblock \emph{Nonlinear dispersive equations}, volume 106 of \emph{CBMS
  Regional Conference Series in Mathematics}.
\newblock Published for the Conference Board of the Mathematical Sciences,
  Washington, DC; by the American Mathematical Society, Providence, RI, 2006.
\newblock Local and global analysis.

\bibitem[Tsutsumi(1990)]{T90}
Y.~Tsutsumi.
\newblock Rate of {$L^2$} concentration of blow-up solutions for the nonlinear
  {S}chr\"{o}dinger equation with critical power.
\newblock \emph{Nonlinear Anal.}, 15\penalty0 (8):\penalty0 719--724, 1990.

\bibitem[Weinstein(1982/83)]{W_Nonl}
M.~Weinstein.
\newblock {Nonlinear {S}chr\"odinger equations and sharp interpolation
  estimates}.
\newblock \emph{Comm. Math. Phys.}, 87\penalty0 (4):\penalty0 567--576,
  1982/83.

\end{thebibliography}

\newcommand{\Addresses}{{
		\bigskip
		\footnotesize
		
		MYKAEL A. CARDOSO, \textsc{Department of Mathematics, UFMG, Brazil;}
		\textsc{Department of Mathematics, UFPI, Brazil}\par\nopagebreak
		\textit{E-mail address:} \texttt{mykael@ufpi.edu.br}
		
		\medskip
		
		LUIZ G. FARAH, \textsc{Department of Mathematics, UFMG, Brazil}\par\nopagebreak
		\textit{E-mail address:} \texttt{farah@mat.ufmg.br}

		\medskip
		 CARLOS M. GUZM\'AN, \textsc{Department of Mathematics, UFF, Brazil;}\par\nopagebreak
		 \textit{E-mail address:} \texttt{carlos.guz.j@gmail.com}

}}
\setlength{\parskip}{0pt}
\Addresses

\end{document}